\theoremstyle{plain}
\newtheorem{thm}{Theorem}[section]
  \theoremstyle{plain}
  \newtheorem{cor}[thm]{Corollary}
  \theoremstyle{plain}
  \newtheorem{prop}[thm]{Proposition}
  \theoremstyle{plain}
  \newtheorem*{thm*}{Theorem}
  \theoremstyle{remark}
  \newtheorem{rem}[thm]{Remark}
 \theoremstyle{definition}
  \newtheorem{example}[thm]{Example}
  \theoremstyle{definition}
  \newtheorem{defn}[thm]{Definition}
  \theoremstyle{plain}
  \newtheorem{lemma}[thm]{Lemma}
  \newtheorem*{mainprop*}{Proposition 1.4}  
  \newtheorem*{mainpropa*}{Proposition 1.3} 
  \newtheorem*{mainthm*}{Theorem 1.1}  
\numberwithin{equation}{section}
\numberwithin{figure}{section}
\providecommand{\SL}[1]{\mathop{\rm SL_2}(\mathbb{#1})}
\providecommand{\PSL}[1]{\mathop{\rm PSL_2}(\mathbb{#1})}
\providecommand{\GL}[1]{\mathop{\rm GL_2}(\mathbb{#1})}
\providecommand{\PGL}[1]{\mathop{\rm PGL_2}(\mathbb{#1})}
\providecommand{\GLF}[1]{\mathop{\rm GL_2}(\mathbb{F}_{#1})}
\providecommand{\PGLF}[1]{\mathop{\rm PGL_2}(\mathbb{F}_{#1})}
\providecommand{\GLZhat}{\mathop{\rm GL_2}(\hat{\mathbb{Z}})}
\providecommand{\GLRplus}{\mathop{\rm GL_2}^{+}({\mathbb{R}})}
\providecommand{\GLZp}{\mathop{\rm GL_2}({\mathbb{Z}_p})}
\providecommand{\SLZ}{\SL{Z}}
\providecommand{\SLR}{\SL{R}}
\providecommand{\PSLZ}{\PSL{Z}}
\providecommand{\PGLZ}{\PGL{Z}}
\providecommand{\PSLR}{\PSL{R}}
\providecommand{\PGLR}{\PGL{R}}
\providecommand{\PSLQ}{\PSL{Q}}
\providecommand{\PGLQ}{\PGL{Q}}
\providecommand{\Id}{\mathop{\rm Id}}
\providecommand{\Norm}{\mathop{\rm Norm}}
\renewcommand{\H}{\mathbb{H}}
\newcommand{\R}{\mathbb{R}}
\newcommand{\C}{\mathbb{C}}
\newcommand{\N}{\mathbb{N}}
\newcommand{\Z}{\mathbb{Z}}
\newcommand{\Q}{\mathbb{Q}}
\renewcommand{\AA}{\mathbb{A}}
\renewcommand{\S}{{S}}
\newcommand{\ip}[1]{\left< \smash #1 \right>}
\newcommand{\St}{T^{1/3}}
\newcommand{\Stinv}{T^{-1/3}}
\newcommand{\VV}{\mathcal{V}}
\newcommand{\A}{V}     
\newcommand{\sgn}{\mathrm{sgn}}
\newcommand{\std}{\mathrm{std}}
\newcommand{\Eins}{\mathbbm{1}}
\newcommand{\n}{\mathrm{n}} 
\newcommand{\gn}{\mathrm{gn}} 
\renewcommand{\o}{\mathrm{o}} 
\renewcommand{\t}{\mathrm{t}} 
\newcommand{\go}{\mathrm{go}} 
\newcommand{\mn}{\mathcal{M}^{\n}} 
\newcommand{\mo}{\mathcal{M}^{\o}} 
\newcommand{\mgn}{\mathcal{M}^{\gn}} 
\newcommand{\mgo}{\mathcal{M}^{\go}} 
\title{Newforms and Spectral Multiplicity for $\Gamma_{0}(9)$}
\author{Fredrik Str{\"o}mberg}
\subjclass[2000]{11F03 (primary), 11F06, 11F12, 11F37, 11F72 (secondary)}
\address{
   Fredrik Str{\"o}mberg\\
   AG Algebra, FB Mathematik \\
   Technische Universit{\"a}t Darmstadt \\
   Schlossgartenstra{\ss}e 7\\
   64289 Darmstadt \\
   Germany
  }
 \email{stroemberg@mathematik.tu-darmstadt.de}
\begin{document}
 
\maketitle

\begin{abstract}
The goal of this paper is to explain certain experimentally observed
properties of the (cuspidal) spectrum and its associated automorphic
forms (Maass waveforms) on the congruence subgroup $\Gamma_{0}(9)$.
The first property is that the spectrum possesses multiplicities in
the so-called new part, where it was previously believed to be simple.
The second property is that the spectrum does not contain any ``genuinely
new'' eigenvalues, in the sense that all eigenvalues of $\Gamma_{0}(9)$
appear in the spectrum of some congruence subgroup of lower level. 

The main theorem in this paper gives a precise decomposition of the
spectrum of $\Gamma_{0}(9)$ and in particular we show
that the genuinely new part is empty. We also prove that there exist
an infinite number of eigenvalues of $\Gamma_{0}(9)$ where
the corresponding eigenspace is of dimension at least two and has
a basis of pairs of Hecke-Maass newforms which are related to each
other by a character twist. These forms are non-holomorphic analogues
of modular forms with inner twists and also provide explicit (affirmative)
examples of a conjecture stating that if the Hecke eigenvalues of
two ``generic'' Maass newforms coincide on a set of primes of
density $\frac{1}{2}$ then they have to be related by a character
twist.
\end{abstract}


\section{Introduction}
\label{sec:intro}

\subsection{A computational approach to modular forms}
Since the works of Gau{\ss} and Riemann experiments have always been
an important part of research in analytic number theory. However,
due to both an increasing complexity of the objects studied as well
as a change in the point of view of mathematical research, the experimental
component did not play an as important role (relatively speaking)
in the number-theoretical development in the beginning and mid 20th
century (Ramanujan being a brilliant exception). Since then, beginning
with the advent of the modern computer, the situation has gradually
changed in favor of experiments. This is especially true in the area
of L-functions and modular forms, where experiments play a prominent
role today. This situation has its roots in both the discovery of
effective computational methods and the development of cheap and fast
processors. The latter is essentially giving the average researcher
access, through a common desktop computer, to more raw computational
power than that of a supercomputer some twenty years ago. 

For holomorphic modular forms the computations are greatly assisted
by using associated algebraic and geometric objects, essentially enabling
all necessary computations to be performed using integer arithmetic
over number fields. For non-holomorphic modular forms, Maass waveforms,
there is in general no algebraic or geometric theory to fall back
on and it is necessary to rely on floating point calculations. The only instances
of Maass waveforms where there exist explicit formulas, or even explicit
information about their field of definition, are forms of CM-type
coming from lifts of Hecke Gr{\"o}{\ss}encharakters (see e.g.~\cite{maass:49,bump,CM}).
It is believed that for a generic Maass waveform (not of CM-type)
the Laplace eigenvalue as well as a certain (infinite) subset of Fourier
coefficients are all algebraically independent transcendental numbers. 
This belief is far from proven but it is supported through extensive numerical experiments 
in ~e.g.~\cite{andreas:effective_comp}. 
At this point it is worth mentioning that the arithmetic properties of {\em Harmonic weak} Maass forms (which we do not treat here) are
completely different. See e.g. \cite{BO2007,BO2010,BruO}.

In view of this (apparent) non-arithmeticity it is not surprising that the experimental and computational
component of the fundamental research is even stronger in the area
of non-holomorphic than holomorphic modular (and automorphic) forms.
The very first computations of (non CM-type) Maass waveforms were
performed in the 70's. For an overview of the early history and a collection of 
tables see \cite[Appendix C]{hejhal:lnm1001}. In the 80's and 90's
more efficient algorithms were developed by Stark and Hejhal (both
separately and in collaboration). See e.g. \cite{MR803370} and \cite{hejhal:99_eigenf}.
The algorithm developed by Hejhal in \cite{hejhal:99_eigenf}, based
on so-called implicit automorphy, has been generalized in various
directions by the author \cite{stromberg:thesis,stromberg:weight},
Avelin \cite{avelin_thesis:07,avelin_eisenstein,helen:deform_published}
and Then \cite{then:02,MR2261095}.

\subsection{Experimental observations of the spectrum}

One of the features observed in the early computations of Maass cusp
forms for the modular group, $\SLZ$,
was that the spectrum seemed to be simple (cf. e.g.~\cite{MR0314733:Cartier,MR1736930,hejhal:92,steil:evs}).
This has later been verified (numerically) for a large number of eigenvalues
for the modular group (cf.~e.g~\cite{then:02}), as well as for
certain parts of the spectrum corresponding to congruence subgroups,
the so called ``new'' part (in the sense of Atkin-Lehner \cite{atkinlehner}).
Based on these computations and ``general principles'' (e.g. the Wigner-von
Neumann Theorem \cite{wigner-vn}) it has since been widely believed
that the new part of the spectrum on congruence subgroups is simple. 

During numerical investigations in connection with \cite{stromberg:thesis,stromberg:weight}
two cases contradicting this belief were discovered. The first such
case was for groups $\Gamma_{0}(p)$, with $p$ prime,
and Nebentypus given by the quadratic residue symbol modulo $p$.
In this case it is easy to see that $\varphi$ and $\overline{\varphi}$
(where $\overline{\varphi}$ is the complex conjugate of $\varphi$)
have the same transformation properties, but are not necessarily identical.
In fact, in this case $\varphi=\overline{\varphi}$ holds only for
CM-type forms and these correspond to a subset of the spectrum of
density zero. See e.g.~\cite[Rem.~1.2.3]{stromberg:thesis}. An
analogous but more intricate argument provides similar examples of
multiplicity for composite levels and certain types of non-trivial
characters, cf.~e.g.~Booker and Str\"ombergsson \cite[Sec.\ 3.4]{andreas:small_ev}. 

The purpose of this paper is to explain another kind of multiplicity
occurring for $\Gamma_{0}(9)$ with trivial Nebentypus.
The Laplace spectrum for this group was studied (numerically) independently
by the author (in connection with \cite{stromberg:thesis}) and by
D. Farmer and S. Lemurell (in connection with \cite{farmer-lemurell}). 

\begin{table}[!ht]
\caption{Eigenvalues $\lambda$ of $\Gamma_{0}(9)$ with $\lambda=\frac{1}{4}+R^{2}$ and $R<10$}
\begin{tabular}{c|c}

\begin{tabular}[t]{l|l}
\multicolumn{1}{c}{$R$  (odd)}& {new on}\\

\noalign{\smallskip}\hline\noalign{\smallskip}
3.5360020929376	& {$\Gamma^3$} \\
4.3880535632221	& {$\Gamma_0(3)$} \\
& \\
5.5040566796766	& {$\Gamma^3$} \\
6.1205755330872	& {$\Gamma_0(3)$} \\
6.6465813556094	& {$\Gamma^3$} \\
6.7574152777543	& {$\Gamma_0(3)$} \\
7.4317991717218	& {$\Gamma^3$} \\
& \\
7.7581331950210	& {$\Gamma_0(3)$} \\
8.193035931685	& {$\Gamma_0(3)$} \\
8.698342956468	& {$\Gamma^3$} \\
& \\
9.0800693497016	& {$\Gamma^3$} \\
9.2923793282248 & {$\Gamma_0(3)$} \\
9.5336952613536	& {$\Gamma_0(1)$} \\

\end{tabular} &
\begin{tabular}[t]{l|l}
\multicolumn{1}{c}{$R$  (even)} & {new on}\\
\noalign{\smallskip}\hline\noalign{\smallskip}
3.5360020929376	& {$\Gamma^3$} \\
& \\
5.0987419087295	& {$\Gamma_0(3)$} \\
5.5040566796765	& {$\Gamma^3$} \\
& \\
6.6465813556097	& {$\Gamma^3$} \\
& \\
7.4317991717219	& {$\Gamma^3$} \\
8.038861203863	&  {$\Gamma_0(3)$} \\
& \\
& \\
8.6983429564685	& {$\Gamma^3$} \\
8.7782823935563	&  {$\Gamma_0(3)$} \\
9.080069349694	& {$\Gamma^3$} \\
& \\
& \\
9.74374939916	& {$\Gamma_0(3)$} \\

\end{tabular} 
\end{tabular} 
\end{table}

The first few computed spectral parameters for $\Gamma_{0}(9)$,
together with some additional information, are listed in Table 1.
For the purpose of numerical computations it is advantageous to desymmetrize
the spectrum and split it into its ``odd'' and ``even'' parts,
where an eigenvalue is said to be odd or even if the corresponding
eigenfunction is odd or even, respectively, with respect to the reflection
in the imaginary axis, $z\mapsto-\overline{z}$. From the numerical
data alone it is clear that
something interesting is happening here, since there are eigenvalues
showing up as both odd and even. Although eigenvalues associated to
oldforms always appear with multiplicity at least two, they can never
appear in both the odd and the even part of the spectrum simultaneously.
A closer inspection of the spectrum of the modular group and of $\Gamma_{0}(3)$
showed that these (at that point unexplained) even and odd multiplicities
were all associated to newforms, making the situation even more intriguing.
At this point it should also be mentioned that the pattern shown in
Table 1 seems to continue, at least up to $R=50$. Based on the numerical
data alone, we were tempted to {\em conjecture} that the entire
part of the spectrum not associated with $\PSLZ$ or $\Gamma_{0}(3)$
(that is oldforms and twists) comes with multiplicity greater than
or equal to two. The real key to proving this conjecture was the discovery
that all the new eigenvalues also appeared in the spectrum of another
congruence group: $\Gamma^{3}$ (and also there with multiplicities).
It is therefore equally tempting to conjecture that all of the spectrum
of $\Gamma_{0}(9)$ is accounted for by looking at the
modular group, $\Gamma_{0}(3)$ and $\Gamma^{3}$.

\subsection{Formulation and discussion of the main theorem}

In the course of proving the conjectures mentioned above (cf.~Corollary \ref{cor:n-genuinely_new_maass_forms} and Proposition \ref{prop:A-positive-proportion-has-mult}), we arrived
at Theorem \ref{thm:main_thm} which is presented below and is given in more detail in Section \ref{sec:The-Main-Theorem}. 
This theorem gives a complete description of the space of Maass waveforms on $\Gamma_{0}(9)$
in terms of forms on congruence subgroups of lower level. 
\begin{thm}
\label{thm:main_thm}The space of Maass waveforms on $\Gamma_{0}(9)$
can be decomposed as a direct sum of spaces of forms of the following
four types: oldforms, twists of forms on the modular group,
twists of forms on $\Gamma_{0}(3)$ and forms related to newforms
on the group $\Gamma^{3}$. Furthermore, the last constituent is non-empty
for an infinite number of eigenvalues. 
\end{thm}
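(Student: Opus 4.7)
The plan is to decompose $\mathcal{M}(\Gamma_{0}(9))$ via the classical Atkin-Lehner oldform/newform splitting and then refine the newform component using character twists together with a geometric correspondence between $\Gamma_{0}(9)$ and $\Gamma^{3}$.

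First, Atkin-Lehner theory yields the decomposition of $\mathcal{M}(\Gamma_{0}(9))$ into oldforms and newforms. The oldform subspace is the span of the level-raising images of newforms on $\Gamma_{0}(1)$ and on $\Gamma_{0}(3)$ under the two standard embeddings $z\mapsto z$ and $z\mapsto 3z$, which supplies the first direct summand of the theorem. Inside the newform space, I would identify two further summands obtained by twisting newforms of level $1$ and of level $3$ by the nontrivial quadratic Dirichlet character $\chi_{3}$ of conductor $3$: because $\chi_{3}^{2}$ is trivial, the twist $f\otimes\chi_{3}$ carries trivial Nebentypus on $\Gamma_{0}(9)$, and by the standard conductor-raising rules it lies in the newform space at level $9$. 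These two twist subspaces are Hecke-stable and mutually orthogonal, and they give the second and third summands.

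For the fourth summand I would exploit the $\mathrm{GL}_{2}(\mathbb{Q})$-conjugacy between $\Gamma_{0}(9)$ and a subgroup related to $\Gamma^{3}$, realised by a scaling matrix such as $\mathrm{diag}(3,1)$, which transports newforms on $\Gamma^{3}$ over to waveforms on $\Gamma_{0}(9)$. The crux of the theorem is that these four constituents together exhaust $\mathcal{M}(\Gamma_{0}(9))$. I would prove this by considering the action on the newform space of a small finite group of normaliser-type operators, generated by the Atkin-Lehner involution $W_{9}$ and the twist-by-$\chi_{3}$ operator, and decomposing into isotypic components; multiplicity one for Hecke eigensystems then matches each isotypic piece with one of the three non-oldform constituents. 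Directness of the sum follows from orthogonality under the Petersson inner product, and exhaustiveness from a dimension or trace-formula count.

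The principal obstacle is this last matching step: one must show that the residual Hecke submodule---the complement of the oldform and of the twist subspaces inside the newform space---is canonically isomorphic as a Hecke module to the newform space of $\Gamma^{3}$, compatibly with the scaling conjugacy. This requires a careful tracking of Hecke operators across the conjugacy and a verification that no additional ``genuinely new'' Hecke eigensystems survive. Once this is in place, the final claim that the $\Gamma^{3}$-summand is non-empty for infinitely many eigenvalues follows from Weyl's law for $\Gamma^{3}$: the counting function of cuspidal eigenvalues $\lambda\le X$ on $\Gamma^{3}$ grows like a positive constant times $X$, while the dimensions accounted for by oldforms on $\Gamma^{3}$ and by character twists of lower-level forms grow strictly more slowly, so infinitely many genuine $\Gamma^{3}$-newforms remain to populate the fourth summand.
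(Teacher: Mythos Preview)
Your overall architecture---oldform/newform splitting, then carving the newform space into the two $\chi_3$-twist subspaces and the $\Gamma^3$-lift via $\A_3=\mathrm{diag}(3,1)$---matches the paper exactly, and your Weyl's-law argument for the infinitude of the $\Gamma^3$ piece is precisely what the paper does (Proposition~\ref{prop:There-exists-R}). The orthogonality of the pieces is also handled in the paper much as you suggest, by direct Petersson computations.

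The genuine gap is in your exhaustiveness step. Decomposing the newform space into isotypic components under the group generated by $W_9$ and the $\chi_3$-twist, and then invoking multiplicity one, does not by itself rule out a residual Hecke eigensystem that is fixed by neither twisting nor any level-lowering---multiplicity one tells you a newform is determined by its Hecke eigenvalues, not that those eigenvalues must arise from a lower-level source. You acknowledge this (``verification that no additional `genuinely new' Hecke eigensystems survive''), but it is the entire content of the theorem. The paper closes this gap by a full Selberg trace formula computation (Section~\ref{sub:The-Selberg-trace-subgroups}): it writes down the identity, elliptic, and parabolic contributions for $\PSLZ$, $\Gamma_0(3)$, $\Gamma_0(9)$, and $\Gamma^3$ explicitly, shows that the combination $X(9)+3X_1-3X(3)-X(\Gamma^3)$ vanishes for each of these, and then kills the remaining hyperbolic term by choosing the test function $h_T(r)=(\sin Tr/Tr)^4$, whose Fourier transform has support shorter than the shortest geodesic. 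This forces the ``genuinely new'' spectrum to be empty (Lemma~\ref{lem:gn(9R)-empty}).

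The paper does sketch a representation-theoretic alternative (Section~\ref{sec:A-representation-theoretic-inter}), which is closer in spirit to your idea, but note two differences: the relevant finite group is $\PGLZ/\Gamma(3)\simeq\PGLF{3}\simeq S_4$ (acting on $\pi^{K(3)}$), not the group generated by $W_9$ and the $\chi_3$-twist; and the key normalizer element for isolating the $\Gamma^3$ contribution is $\St$ (of order $3$), whose $\zeta_3^{\pm1}$-eigenspaces are exactly the $\A_3$-image of $\mn(\Gamma^3)$. Even then, the paper remarks that the classical version of this argument does not give exhaustiveness on its own and that one must pass to automorphic representations of $\GL{A}$ and use the adelic newvector theory to match each irreducible $S_4$-constituent to a specific source.
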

The precise meaning of ``oldforms'' and ``twists'' in this
situation will be explained in Sections \ref{sub:Maass-waveforms}
and \ref{sub:Lifts-and-twists}. 
We say that a Maass waveform on a
congruence subgroup, which is neither a twist of, nor associated to, a Maass
waveform on a group of lower level is {\em genuinely new}. 
This notion will be discussed further in Section \ref{sec:The-Main-Theorem} and we can formulate the following consequence of Theorem \ref{thm:main_thm}.
\begin{cor}
\label{cor:n-genuinely_new_maass_forms}There are no genuinely new
Maass waveforms on $\Gamma_{0}(9)$. 
\end{cor}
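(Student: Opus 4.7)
The corollary is essentially a direct consequence of Theorem \ref{thm:main_thm}; the real mathematical content sits in that theorem, and the plan for the corollary is just to verify that each of the four summands in the decomposition falls outside the definition of ``genuinely new'' given at the end of Section 1.3 (neither a twist of, nor associated to, a Maass waveform on a congruence subgroup of lower level).

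Concretely, let $\varphi$ be any Maass waveform on $\Gamma_{0}(9)$. By Theorem \ref{thm:main_thm} we may write
\[
\varphi \;=\; \varphi_{\mathrm{old}} \,+\, \varphi_{1} \,+\, \varphi_{3} \,+\, \varphi_{\Gamma^{3}},
\]
where the four summands are respectively an oldform, a twist of a form on the modular group $\PSLZ$, a twist of a form on $\Gamma_{0}(3)$, and a form associated (via the lift construction of Section \ref{sub:Lifts-and-twists}) to a newform on $\Gamma^{3}$. The first summand is by definition associated to a Maass waveform on $\Gamma_{0}(1)$ or $\Gamma_{0}(3)$ and the second and third are by construction twists of forms on $\PSLZ$ and $\Gamma_{0}(3)$; since both of these groups are of strictly lower level than $\Gamma_{0}(9)$, the first three summands cannot contribute anything genuinely new. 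For the fourth summand I would invoke the fact that $\Gamma^{3}$ is a congruence group of lower level than $\Gamma_{0}(9)$ (to be made precise in Section \ref{sec:The-Main-Theorem}) and that the lift of Section \ref{sub:Lifts-and-twists} is precisely an instance of the ``association'' appearing in the definition of ``genuinely new''.

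The only point requiring care is the definitional compatibility in the fourth case: one must check that the specific lift procedure relating newforms on $\Gamma^{3}$ to forms on $\Gamma_{0}(9)$, as set up in Section \ref{sub:Lifts-and-twists}, is covered by the clause ``associated to a Maass waveform on a group of lower level''. This is really a matter of bookkeeping rather than mathematics, and once the definitions in Sections \ref{sub:Maass-waveforms}--\ref{sub:Lifts-and-twists} and \ref{sec:The-Main-Theorem} are in place, the corollary drops out: every $\varphi$ on $\Gamma_{0}(9)$ is already accounted for by one of the four non-genuinely-new types in Theorem \ref{thm:main_thm}, so the space of genuinely new waveforms on $\Gamma_{0}(9)$ is zero.
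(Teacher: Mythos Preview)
Your proposal is correct and follows the same route as the paper: the corollary is stated as an immediate consequence of Theorem~\ref{thm:main_thm}, and the paper likewise writes that ``from this theorem we immediately deduce that there are no genuinely new forms on $\Gamma_0(9)$''. Your check that each of the summands lies in the generalized old space $\mgo(9,R)$ (oldforms and twists by definition, and the $\Gamma^3$-contribution because $\A_3\Gamma_0(9)\A_3^{-1}=\Gamma(3)\subseteq\Gamma^3$ with $\Gamma^3$ of level $3$) is exactly the bookkeeping the paper carries out when setting up the definition of $\mgn(9,R)$ in Section~\ref{sec:The-Main-Theorem}.
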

By exploiting properties of the group $\Gamma^3$, in particular that it has two non-commuting involutions $z\mapsto z+1$ and $z\mapsto -\bar{z}$,
we will also show the following proposition in Section \ref{sub:Lifts-from-}.
\begin{prop}
\label{prop:A-positive-proportion-has-mult}
Two fifth of the new part of the spectrum of $\Gamma_{0}(9)$ has multiplicity at least two.
\end{prop}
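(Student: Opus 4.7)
The plan is to combine the direct-sum decomposition of Theorem~\ref{thm:main_thm} with a dimension count from Weyl's law and a representation-theoretic analysis of the two non-commuting symmetries of $\Gamma^3$. Throughout I interpret the proposition as a statement about the new part of $L^2$ counted with multiplicity: $2/5$ of the dimension of the new part of the cuspidal spectrum of $\Gamma_0(9)$ sits in eigenspaces of multiplicity at least two.

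Using the hyperbolic areas $\pi/3,\,4\pi/3,\,\pi,\,4\pi$ for $\PSLZ,\,\Gamma_{0}(3),\,\Gamma^{3},\,\Gamma_{0}(9)$ together with Weyl's law and the Atkin--Lehner oldform rule (a level-$M$ newform contributes $\sigma_{0}(N/M)$ oldforms to level $N$), one finds that below spectral parameter $R$ the asymptotic dimensions of the new parts of $\PSLZ,\,\Gamma_{0}(3),\,\Gamma^{3},\,\Gamma_{0}(9)$ are $R^{2}/12,\,R^{2}/6,\,R^{2}/6,\,5R^{2}/12$ respectively. Since the character twists in Theorem~\ref{thm:main_thm} preserve dimension, the twist-of-$\PSLZ$ and twist-of-$\Gamma_{0}(3)$ pieces of the new spectrum on $\Gamma_{0}(9)$ occupy dimensions $R^{2}/12$ and $R^{2}/6$, and the $\Gamma^{3}$-related piece occupies the remaining $R^{2}/6$. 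Strong multiplicity one pins down $\varphi\otimes\chi_{3}$ uniquely up to scalar from its Hecke eigenvalues, and the direct-sum structure of Theorem~\ref{thm:main_thm} precludes matches between constituents, so the two twist pieces give only simple eigenvalues; all multiplicity $\ge 2$ must therefore come from the $\Gamma^{3}$-related piece.

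Next, I would produce multiplicity two in the $\Gamma^{3}$-related piece using the two symmetries. The normalizer of $\Gamma^{3}$ in $\PGLR$ contains $T\colon z\mapsto z+1$, of order $3$ modulo $\Gamma^{3}$ (since $T^{3}\in\Gamma^{3}$), and $J\colon z\mapsto -\bar z$, of order $2$; a direct computation gives $JTJ^{-1}=T^{-1}$, so the images of $T$ and $J$ in $N(\Gamma^{3})/\Gamma^{3}$ generate a dihedral group $S_{3}$ acting on the space of Maass forms for $\Gamma^{3}$ and commuting with the Laplacian. Decomposing that space by $T$-eigencharacter gives $V_{1}\oplus V_{\omega}\oplus V_{\omega^{2}}$ with $\omega=e^{2\pi i/3}$; since $\langle\Gamma^{3},T\rangle=\PSLZ$, the summand $V_{1}$ consists exactly of $\PSLZ$-forms (contributing only oldforms on $\Gamma_{0}(9)$), while $J$ swaps $V_{\omega}$ and $V_{\omega^{2}}$. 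Hence the new part of $\Gamma^{3}$ decomposes into $J$-pairs sharing a single Laplace eigenvalue, and the correspondence of Theorem~\ref{thm:main_thm} transports each such pair to a two-dimensional eigenspace on $\Gamma_{0}(9)$. The mult-$\ge 2$ part of the new spectrum therefore has asymptotic dimension $R^{2}/6=2R^{2}/12$, giving the claimed ratio $(R^{2}/6)/(5R^{2}/12)=2/5$.

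The main obstacle is the last step: verifying that the correspondence defining the $\Gamma^{3}$-related part of $\Gamma_{0}(9)$ in Theorem~\ref{thm:main_thm} transports a $J$-pair on $\Gamma^{3}$ to two linearly independent Maass forms on $\Gamma_{0}(9)$, so that the eigenspace on $\Gamma_{0}(9)$ is genuinely two-dimensional rather than collapsing. One must also confirm that the density-zero exceptional set (where CM-type or self-twist coincidences could inflate the multiplicity of a twist form or identify forms across the three constituents) does not perturb the asymptotic proportion.
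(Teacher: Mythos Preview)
Your approach is essentially the paper's: use Weyl's law to count newforms on $\Gamma_{0}(9)$ and on $\Gamma^{3}$, observe that the $\Gamma^{3}$-newforms inject into the $\Gamma_{0}(9)$-newforms and make up $2/5$ of them, and then use the $T,J$ dihedral action on $\mathcal{M}(\Gamma^{3})$ (your $V_{\omega}\leftrightarrow V_{\omega^{2}}$ swap is exactly the paper's Lemma~\ref{lem:dim-newforms-Gamma3istwo}) to force multiplicity $\ge 2$.

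Two remarks on where you diverge. First, you invoke Theorem~\ref{thm:main_thm} as your starting point, but the paper proves this proposition \emph{before} and \emph{independently of} the main theorem: it only needs the injection $\mn(\Gamma^{3})\hookrightarrow\mn(9)$ (Proposition~\ref{prop:f_in_newspace}), not the full orthogonal decomposition, and in particular it avoids the Selberg trace formula entirely. Your detour through strong multiplicity one to argue that the twist pieces are simple is likewise unnecessary: the proposition asserts only that $2/5$ of the new spectrum has multiplicity $\ge 2$, a lower bound, so you need not exclude further multiplicities elsewhere.

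Second, the ``main obstacle'' you flag is not an obstacle. The correspondence in Theorem~\ref{thm:main_thm} is simply $\varphi\mapsto\varphi_{|\A_{3}}$, i.e.\ $\varphi(z)\mapsto\varphi(3z)$; this is a linear bijection on functions on $\H$ and manifestly preserves linear independence, so a $J$-pair on $\Gamma^{3}$ lands on two independent forms on $\Gamma_{0}(9)$. The paper handles this implicitly via Lemma~\ref{lem:lifts-fromGamma3toGamma09} (the map intertwines $T$ with $T^{1/3}$, so the $\zeta_{3}$ and $\zeta_{3}^{-1}$ eigenspaces stay distinct).
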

To prove the main theorem we calculate all different types of maps taking 
forms on $\PSLZ$, $\Gamma_{0}(3)$ and
$\Gamma^{3}$ to forms on $\Gamma_{0}(9)$. We then show
that the corresponding images inside the space of Maass waveforms on $\Gamma_0(9)$, 
are all disjoint (and in fact orthogonal). To show that the complement
of the sum of these images is empty we write down the Selberg trace
formula explicitly for all the groups involved. Using an inclusion--exclusion argument and subtracting off
all contributions from forms associated to lower level subgroups 
we are then able to show that the the remaining part vanishes completely.

Note that, in order to prove a weaker version of Proposition \ref{prop:A-positive-proportion-has-mult} 
with ``A positive proportion`` replaced by ``An infinite number''  we would not need to use the full Selberg trace
formula; using Weyl's law (which essentially amounts to the main term) would
be enough. 

In Section \ref{sec:A-representation-theoretic-inter} we give a brief
discussion on a representation-theoretical interpretation of the main theorem.
We also sketch an alternative proof of Corollary \ref{cor:n-genuinely_new_maass_forms} using elementary
representation theory of finite groups, together with the theory of automorphic representations of adele groups.

\subsection{Applications}
It turns out that the construction we use to prove the existence of
forms with multiple Laplace eigenvalues is compatible with the Hecke
theory for Maass waveforms on $\Gamma_{0}(9)$ (see Sections
\ref{sub:Maass-waveforms} and \ref{sub:Hecke-operators-onGamma09}).
In Section \ref{sub:Hecke-operators-onGamma09} we will prove the following
Proposition.
\begin{prop}
\label{prop:Hecke-pair}
There exist an infinite number of pairs $\left\{ F^{+},F^{-}\right\}$
of Maass newforms on $\Gamma_{0}(9)$ with the property
that the Hecke eigenvalues $c^{+}(n)$ and $c^{-}(n)$ of $F^{+}$ and $F^{-}$ are related through 
$c^{-}(n)=\left(\frac{n}{3}\right)c^{+}(n)$
for all $n$ relatively prime to $3$. 
\end{prop}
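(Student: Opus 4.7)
The plan is to combine Theorem~\ref{thm:main_thm} and Proposition~\ref{prop:A-positive-proportion-has-mult} with an analysis of how the twist by the quadratic character $\chi_3 = \left(\tfrac{\cdot}{3}\right)$ interacts with the lift from $\Gamma^3$ to $\Gamma_0(9)$. Concretely, I would fix the explicit lifting map $L : S(\Gamma^3) \to S(\Gamma_0(9))$ whose image is the ``genuinely lifted'' constituent identified in Theorem~\ref{thm:main_thm}. By Proposition~\ref{prop:A-positive-proportion-has-mult}, for infinitely many Maass newforms $f$ on $\Gamma^3$ the image of $f$ under $L$ spans a two-dimensional subspace of $S(\Gamma_0(9))$, the two directions being distinguished as the $\pm 1$-eigenvectors of one of the two non-commuting involutions in the normalizer of $\Gamma^3$; call these $F^+$ and $F^-$. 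They are \emph{new}forms on $\Gamma_0(9)$ because the other constituents in Theorem~\ref{thm:main_thm} are oldforms or twists of forms of strictly smaller level, and hence are ruled out by the construction.

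Next I would verify that the Hecke operators $T_n$ on $\Gamma_0(9)$ with $(n,3)=1$ can be transported through $L$, so that they act on $L(S(\Gamma^3))$ compatibly with the corresponding Hecke operators on $\Gamma^3$, and moreover that they commute with both involutions from Proposition~\ref{prop:A-positive-proportion-has-mult}. Consequently $F^+$ and $F^-$ can be chosen to be joint Hecke eigenforms for the full algebra of $T_n$ with $(n,3)=1$, with eigenvalues differing at each $n$ by at most a sign determined by the involution.

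To establish the precise relation $c^-(n) = \chi_3(n)\,c^+(n)$, I would realize the involution pairing $F^+ \leftrightarrow F^-$ as the analytic twist operator $F \mapsto F \otimes \chi_3$. On Fourier expansions at the cusp $\infty$ of $\Gamma_0(9)$ the twist by $\chi_3$ multiplies $c(n)$ by $\chi_3(n)$ for $(n,3)=1$, while the geometric involution $z \mapsto z+1$ of $\Gamma^3 \backslash \H$, transported through $L$ to $\Gamma_0(9)$, produces exactly this multiplication once one tracks the change in cusp width and in double-coset representatives between the two groups. Since $F^+ \otimes \chi_3$ lies in $S(\Gamma_0(9))$ (the conductor of $\chi_3$ is $3$ and $3^2 \mid 9$) and is again a Hecke eigenform sharing the Laplace eigenvalue of $F^+$, the multiplicity-two statement of Proposition~\ref{prop:A-positive-proportion-has-mult} forces $F^+ \otimes \chi_3 = F^-$ up to scalar, yielding the asserted Hecke relation.

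The main obstacle is the identification in the previous paragraph of the geometric $\Gamma^3$-involution with the analytic operator $\otimes \chi_3$. This is an explicit but delicate computation with Fourier expansions and coset representatives, and one has to verify compatibility with the Hecke algebra away from $3$ in order to obtain the eigenvalue relation precisely, rather than merely a twist by some unspecified quadratic character. Once this identification is in place, the infinitude of Hecke pairs $\{F^+, F^-\}$ on $\Gamma_0(9)$ follows immediately from the infinitude clause of Proposition~\ref{prop:A-positive-proportion-has-mult}.
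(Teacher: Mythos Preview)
Your overall strategy---lift newforms from $\Gamma^3$ via $\A_3$, exploit the normalizer symmetries, and identify the pairing $F^+\leftrightarrow F^-$ with the $\chi_3$-twist---is the right one and matches the paper's approach in spirit. But the middle paragraph contains a genuine gap: the Hecke operators $T_p$ do \emph{not} commute with the transported operator $\St$ (i.e.\ with $z\mapsto z+1$ on $\Gamma^3$) for all $p$. One computes (this is Lemma~\ref{lem:property-of-Tp}) that $T_p\St=\St T_p$ when $p\equiv 1\pmod 3$, but $T_p\St=\Stinv T_p$ when $p\equiv 2\pmod 3$. Thus for half the primes $T_p$ \emph{swaps} the $\zeta_3$ and $\zeta_3^{-1}$ eigenspaces of $\St$ rather than preserving them, and you cannot simultaneously diagonalize the full Hecke algebra on a single $\St$-eigenspace. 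The paper repairs this by passing to modified operators $\mathcal{T}_p:=JT_p$ for $p\equiv 2\pmod 3$: since $J$ also swaps the two eigenspaces, each $\mathcal{T}_p$ preserves them, and one diagonalizes $\{\mathcal{T}_p\}$ to obtain $\Phi\in\mn(9,R)^{(1)}$, then sets $\Psi=\Phi_{|J}$ and $F^\pm=\tfrac12(\Phi\pm\Psi)$. Without this modification your construction of $F^\pm$ as joint Hecke eigenforms does not go through.

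Two smaller points. First, $z\mapsto z+1$ has order $3$ on $\Gamma^3\backslash\H$, not $2$; the relevant eigenvalues are $1,\zeta_3,\zeta_3^{-1}$, and it is $J$ (not $T$) whose $\pm1$-eigenvectors are $F^\pm$. Transported to $\Gamma_0(9)$, the map $\St$ multiplies the $n$-th Fourier coefficient by $\zeta_3^n$, not by $\chi_3(n)$; the $\chi_3$-twist is the linear combination $\tfrac{1}{i\sqrt3}(\St-\Stinv)$, not $\St$ itself. Second, your final step---``multiplicity two forces $F^+\otimes\chi_3=F^-$''---does not follow from Proposition~\ref{prop:A-positive-proportion-has-mult}, which only asserts multiplicity $\ge 2$ and gives no control on which forms occupy the eigenspace. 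The paper instead reads $c^-(n)=\chi_3(n)c^+(n)$ directly off the Fourier expansion: since $\Phi_{|\St}=\zeta_3\Phi$ forces $a(n)=0$ unless $n\equiv 1\pmod 3$, and $\Psi$ has coefficients $a(-n)$, the claimed relation for $F^\pm=\tfrac12(\Phi\pm\Psi)$ is immediate by inspection. Your intuition that $F^-=(F^+)_{\chi_3}$ is in fact correct, but it is a \emph{consequence} of this computation, not a substitute for it.
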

A pair of functions as in Proposition \ref{prop:Hecke-pair} have
Hecke eigenvalues which coincide for a set of primes of density $\frac{1}{2}$,
but unless $c^{+}(n)=c^{-}(n)=0$ for all $n\equiv-1\mod3$
they are not identical (this is essentially the condition of being
a CM-type form). It therefore provides an example of a conjecture
of Rajan \cite{MR1606395,MR1707005,MR1994478} (here formulated in
the setting of Maass waveforms): if two generic (non CM-type) cuspidal
Maass newforms have Hecke eigenvalues agreeing for a set of primes
of positive density then one of them must be a twist of the other
by a Dirichlet character. The following variant of this conjecture
was proven by Ramakrishnan \cite[Cor.\ 4.1.3]{MR1792292}: if two
cuspidal Maass newforms $f$ and $g$ have Hecke eigenvalues whose
squares are equal then they are in fact related through a twist by
a Dirichlet character. 

The following multiplicity one theorem was proven by Ramakrishnan
\cite{MR1253208}: 
\begin{thm}
[Ramakrishnan]\label{thm:thm.1_over_8}Let $f$ and $g$ be two Maass
cusp forms with Hecke eigenvalues $a_{f}(p)$ and $a_{g}(p)$.
If the proportion of primes $p$ for which $a_{f}(p)\ne a_{g}(p)$
is less than $\frac{1}{8}$ then $f=g$. 
\end{thm}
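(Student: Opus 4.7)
The plan is to convert the hypothesis on the density of primes where $a_f(p) \ne a_g(p)$ into an analytic statement about Rankin--Selberg convolution $L$-functions at $s=1$. Setting $S = \{p : a_f(p) \ne a_g(p)\}$ and arguing by contrapositive, the goal is to show that $f \ne g$ forces the lower density of $S$ to be at least $1/8$.

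First I would introduce the weighted second moment
\begin{equation*}
D(s) \;=\; \sum_{p} |a_f(p) - a_g(p)|^{2}\,\frac{\log p}{p^{s}}, \qquad \Re(s) > 1.
\end{equation*}
Expanding the square rewrites $D(s)$ as a combination of the logarithmic derivatives of the four Rankin--Selberg convolutions $L(s, f \times \bar f)$, $L(s, g \times \bar g)$, $L(s, f \times \bar g)$, $L(s, g \times \bar f)$, plus a bounded error coming from the prime powers. The two diagonal $L$-functions have simple poles at $s=1$ with residue normalized to $1$, by the cuspidality of $f$ and $g$ together with standard Rankin--Selberg theory; by the strong multiplicity one theorem of Jacquet--Shalika, the cross convolutions are entire under the assumption $f \ne g$. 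Combining these facts gives $D(s) \sim 2/(s-1)$ as $s \to 1^{+}$, so in an analytic sense the set $S$ is forced to be large.

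The core comparison step is then to confront $D(s)$ with the bare counting sum $A(s) = \sum_{p \in S} \log p / p^{s}$, which has principal part $\delta(S)/(s-1)$. Under the Ramanujan--Petersson bound $|a_f(p)|, |a_g(p)| \le 2$ the trivial estimate $|a_f(p) - a_g(p)|^{2} \le 16$ on $S$ immediately yields $2 \le 16\,\delta(S)$, and the proof is finished. Since Ramanujan--Petersson is still conjectural for Maass waveforms, I would replace this trivial bound by a Cauchy--Schwarz argument using the weighted fourth moment $E(s) = \sum_{p} |a_f(p) - a_g(p)|^{4} \log p / p^{s}$ and the inequality $D(s)^{2} \le A(s)\,E(s)$. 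The fourth moment $E(s)$ is controlled by the Gelbart--Jacquet symmetric square lift, which places $\mathrm{Sym}^{2} f$ and $\mathrm{Sym}^{2} g$ on $\mathrm{GL}_{3}$; expanding $|a_f - a_g|^{4}$ via Hecke relations and the resulting $\mathrm{GL}_{3} \times \mathrm{GL}_{3}$ Rankin--Selberg $L$-functions, whose pole orders at $s=1$ are known, the residue should evaluate to a constant at most $32$, delivering $\delta(S) \ge 4/32 = 1/8$.

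The main obstacle is the precise residue computation of $E(s)$ together with the careful case analysis according to whether $\mathrm{Sym}^{2} f$ and $\mathrm{Sym}^{2} g$ are cuspidal on $\mathrm{GL}_{3}$ or decompose as isobaric sums. The decomposed case occurs precisely for dihedral (CM-type) forms, where I would invoke Ramakrishnan's earlier multiplicity-one theorem for the symmetric square, namely that $\mathrm{Sym}^{2} f \cong \mathrm{Sym}^{2} g$ forces $f \cong g \otimes \chi$ for some Dirichlet character $\chi$; the density hypothesis $\delta(S) < 1/8$ then forces $\chi$ to be trivial, yielding $f = g$ and completing the argument.
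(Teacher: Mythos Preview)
The paper does not contain a proof of this theorem. It is quoted as a result of Ramakrishnan and attributed to \cite{MR1253208}; the paper only \emph{uses} it, via Proposition~\ref{prop:Hecke-pair}, to deduce Corollary~\ref{cor:sharpness_of_1_over_2}. So there is nothing in the paper to compare your proposal against.

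That said, your outline is a faithful sketch of Ramakrishnan's actual argument in \cite{MR1253208}: the second moment $D(s)$ is analyzed via the four Rankin--Selberg $L$-functions, the Ramanujan bound is circumvented by Cauchy--Schwarz against the fourth moment, and the fourth moment is controlled through the Gelbart--Jacquet symmetric square lift to $\mathrm{GL}_3$ together with the known analytic properties of $\mathrm{GL}_3\times\mathrm{GL}_3$ Rankin--Selberg $L$-functions. Your identification of the ``main obstacle'' is also accurate: the constant $1/8$ comes out of a careful residue count for the fourth moment, and the dihedral case (where $\mathrm{Sym}^2$ fails to be cuspidal) must be handled separately, exactly as you indicate. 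One small point: your final sentence invokes the implication $\mathrm{Sym}^2 f\cong\mathrm{Sym}^2 g\Rightarrow f\cong g\otimes\chi$, but that result (stated in the present paper as Theorem~\ref{thm:2}) is from Ramakrishnan's later paper \cite{MR1792292}, not \cite{MR1253208}; in the original argument the dihedral case is disposed of more directly using the explicit structure of monomial representations.
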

By examples constructed by Serre and others (see e.g.~\cite{MR1265561})
it is known that in the general formulation above, the number $\frac{1}{8}$
is sharp. However, the constructed examples are all of special types
(corresponding to CM-type forms), and by the analogous results in
the $l$-adic setting by Rajan \cite{MR1606395} it is in fact expected
(cf.~e.g.~\cite[p.\ 189]{MR1994478}) that the number $\frac{1}{8}$
can be replaced by $\frac{1}{2}$ under the assumption that $f$ and
$g$ are of general (non-CM) type. The existence of the forms in Proposition
\ref{prop:Hecke-pair} proves the following corollary.
\begin{cor}
\label{cor:sharpness_of_1_over_2}It is not possible to replace $\frac{1}{8}$
in the above theorem with any number greater than $\frac{1}{2}$ even
under the assumption that $f$ and $g$ are of general type. 
\end{cor}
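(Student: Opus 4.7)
The plan is that Corollary \ref{cor:sharpness_of_1_over_2} follows almost directly from Proposition \ref{prop:Hecke-pair}. Given any real number $X>\tfrac{1}{2}$, I would extract a pair $\{F^{+},F^{-}\}$ of distinct Maass newforms on $\Gamma_{0}(9)$ produced by Proposition \ref{prop:Hecke-pair}, set $f=F^{+}$ and $g=F^{-}$, and show that the proportion of primes where $a_{f}(p)\neq a_{g}(p)$ is at most $\tfrac{1}{2}<X$. This contradicts any hypothetical strengthening of Theorem \ref{thm:thm.1_over_8} in which $\tfrac{1}{8}$ is replaced by some $X>\tfrac{1}{2}$, thereby delivering the sharpness statement.

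For each prime $p\neq 3$ the twist relation $c^{-}(p)=\left(\frac{p}{3}\right)c^{+}(p)$ yields
\[
a_{f}(p)-a_{g}(p)=\left(1-\left(\tfrac{p}{3}\right)\right)c^{+}(p),
\]
which vanishes on every prime $p\equiv 1\pmod 3$. Thus the set of primes at which $a_{f}$ and $a_{g}$ differ is contained in $\{3\}\cup\{p:p\equiv 2\pmod 3\}$, whose natural density equals $\tfrac{1}{2}$ by Dirichlet's theorem on primes in arithmetic progressions. Hence the proportion of primes where $a_{f}(p)\neq a_{g}(p)$ is bounded above by $\tfrac{1}{2}<X$, while $f\neq g$ by the choice of pair, which is precisely the required counterexample.

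The only delicate point, and indeed the single genuine obstacle, is to guarantee that the pair $\{F^{+},F^{-}\}$ can be selected with both members of general (non-CM) type, as demanded by the strengthened hypothesis. This is resolved by the remark recorded immediately after Proposition \ref{prop:Hecke-pair}: the two members of a pair coincide precisely when $c^{+}(n)=0$ for every $n\equiv-1\pmod 3$, which is essentially the CM-type condition (relative to $\mathbb{Q}(\sqrt{-3})$, the only imaginary quadratic field that can give rise to CM Maass newforms at level $9$ with trivial Nebentypus). Consequently any genuinely two-element pair produced by Proposition \ref{prop:Hecke-pair} automatically consists of two \emph{distinct} non-CM newforms, and since infinitely many such pairs are provided the required family of counterexamples is at hand.
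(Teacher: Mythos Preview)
Your proof is correct and follows the same approach as the paper, which simply records that the corollary is an immediate consequence of the existence of the pairs in Proposition~\ref{prop:Hecke-pair}. Your treatment of the non-CM condition is slightly more elaborate than needed: the paper notes directly (end of Section~\ref{sub:Maass-waveforms}) that there are in fact no CM-type Maass forms on $\Gamma_0(9)$, so every pair $\{F^+,F^-\}$ automatically consists of forms of general type, and their distinctness is established in Section~\ref{sub:Hecke-operators-onGamma09} via orthogonality.
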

By using the same methods as
presented in this paper it is possible to prove the analogue of Theorem \ref{thm:main_thm}
for $\mathcal{M}_{k}(9)$, the space of holomorphic modular
forms of weight $k$ on $\Gamma_{0}(9)$ (and similarly
for the space of cusp forms $\mathcal{S}_{k}(9)$). The
only modification to the proof is that we do not need to use the Selberg
 trace formula but can instead use a standard dimension formula for spaces of holomorphic modular forms, cf. e.g.~\cite[Thm.~2.5.2]{miyake}. 
\begin{thm}
The decomposition given in Theorem \ref{thm:main_thm} holds also
for holomorphic modular forms.
\end{thm}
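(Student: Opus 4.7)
The plan is to mirror the proof of Theorem \ref{thm:main_thm} step by step, replacing the analytic input (the Selberg trace formulae on $\Gamma_{0}(9)$, $\Gamma_{0}(3)$, $\PSLZ$ and $\Gamma^{3}$) by a finite-dimensional dimension count. First I would reinterpret each of the four constructions appearing in the Maass setting --- inclusion of oldforms, character twist of a form on $\PSLZ$, character twist of a form on $\Gamma_{0}(3)$, and the lift from $\Gamma^{3}$ --- as linear maps between spaces of holomorphic modular forms of weight $k$. Each such construction is of the form ``pull back along a group inclusion'', ``multiply by a Dirichlet character'' or ``average over a set of coset representatives''; all three operations preserve holomorphy and transform the automorphy factor $(cz+d)^{k}$ correctly, so each produces a well-defined linear map into $\mathcal{M}_{k}(9)$ whose restriction to cusp forms lands in $\mathcal{S}_{k}(9)$.

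Next I would verify that these maps are injective and that their images are pairwise disjoint (and in fact orthogonal on the cuspidal part under the Petersson inner product). This is essentially the same linear-algebra argument carried out in Sections \ref{sub:Lifts-and-twists} and onwards for Maass waveforms: different Hecke eigensystems, opposite eigenvalues under the relevant Atkin--Lehner involutions, and incompatible twisting characters distinguish the four constituents from one another. Nothing analytic is required here, only the holomorphic version of the Hecke theory already invoked in the paper; the formal manipulations are identical.

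The final step is to check that the sum of the dimensions of the four images exhausts $\dim\mathcal{M}_{k}(9)$ (and, with the same bookkeeping, $\dim\mathcal{S}_{k}(9)$). In the Maass setting this comparison is achieved through a delicate inclusion--exclusion of Selberg trace formulas; for holomorphic forms it is considerably cleaner. The dimension of $\mathcal{M}_{k}(\Gamma)$ for each of $\PSLZ$, $\Gamma_{0}(3)$, $\Gamma^{3}$ and $\Gamma_{0}(9)$ can be read off in closed form from the genus, the number of cusps and the elliptic point data of the corresponding modular curve via the standard dimension formula, cf.~\cite[Thm.~2.5.2]{miyake}. I would then collect these dimensions, account for the multiplicities produced by the various oldform embeddings and twist maps with the same combinatorial bookkeeping used in the Maass proof, and verify as a polynomial identity in $k$ (split into the usual residue classes modulo $12$, together with the two parities determined by the reflection $z\mapsto -\bar{z}$) that the totals on both sides agree. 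The main --- and really only --- obstacle is keeping the contributions of cusps and elliptic points of $\Gamma^{3}$ straight and matching them correctly against those of $\Gamma_{0}(9)$, given that $\Gamma^{3}$ is not a subgroup of $\Gamma_{0}(9)$ and so the lift from $\Gamma^{3}$ requires an explicit set of coset representatives. Once this bookkeeping is complete, equality of dimensions together with the disjointness established in the previous step forces the decomposition as a direct sum.
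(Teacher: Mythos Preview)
Your proposal is correct and matches the paper's own approach: the paper does not give a detailed proof of this theorem but simply remarks that the same methods go through, with the Selberg trace formula replaced by the standard dimension formula \cite[Thm.~2.5.2]{miyake}, exactly as you describe.

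Two small points of caution. First, your parenthetical about splitting into ``the two parities determined by the reflection $z\mapsto -\bar{z}$'' should be dropped: $J$ is anti-holomorphic and does not act on $\mathcal{M}_{k}(9)$, and the paper explicitly notes just after this theorem that this is the one part of the argument which does \emph{not} carry over to the holomorphic setting. Fortunately the dimension count never needs $J$; only the multiplicity statement (Proposition~\ref{prop:A-positive-proportion-has-mult}) used it. Second, the lift from $\Gamma^{3}$ to $\Gamma_{0}(9)$ is simply the pullback $\varphi\mapsto\varphi_{|\A_{3}}$ via $\A_{3}\Gamma_{0}(9)\A_{3}^{-1}=\Gamma(3)\subset\Gamma^{3}$ (cf.~Example~\ref{exa:lift-fromGamma^3toGama09}), so no explicit coset representatives are needed there; the orthogonality of its image to the oldforms is again the computation of Lemma~\ref{lem:PerponGamma3andGamma09}, which is purely algebraic and works verbatim for weight-$k$ forms.
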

For the benefit of the reader we also formulate the following immediate
consequence of the above theorem.
\begin{cor}
\label{thm:main_cor_for_modular_forms}There are no genuinely new
holomorphic modular forms on $\Gamma_{0}(9)$. 
\end{cor}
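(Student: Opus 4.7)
The plan is to deduce the corollary directly from the preceding theorem together with the notion of ``genuinely new'' used earlier to obtain Corollary \ref{cor:n-genuinely_new_maass_forms}. First I would transfer that notion to the holomorphic setting in the obvious way: a modular form $f \in \mathcal{M}_k(9)$ is genuinely new if it is not an oldform, is not a twist of any form on $\PSLZ$ or on $\Gamma_0(3)$, and is not obtained, via the lifts described in Section \ref{sub:Lifts-and-twists}, from any newform on $\Gamma^{3}$. By construction, this is precisely the complement inside $\mathcal{M}_k(9)$ of the union of the four subspaces appearing in the holomorphic analogue of Theorem \ref{thm:main_thm}.

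Next, I would invoke the preceding theorem, which asserts that $\mathcal{M}_k(9)$ equals the direct sum of exactly those four subspaces. Any genuinely new form must then have zero projection on each summand of the decomposition and hence is identically zero. There is no real obstacle at this stage: all of the substantive work has been carried out in the preceding theorem, whose proof is obtained from the argument for Theorem \ref{thm:main_thm} by replacing the Selberg trace formula with the standard dimension formula for holomorphic modular forms on congruence subgroups (as indicated in the remark preceding the theorem). Once that theorem is in hand, the corollary is a one-line bookkeeping statement, entirely parallel to the passage from Theorem \ref{thm:main_thm} to Corollary \ref{cor:n-genuinely_new_maass_forms}; the only point that deserves a brief verification is that the four constructions (oldform embeddings, character twists from $\PSLZ$ and from $\Gamma_0(3)$, and lifts from $\Gamma^{3}$) all make sense in the holomorphic category and produce elements of $\mathcal{M}_k(9)$, which is immediate from their definition in Section \ref{sub:Lifts-and-twists}.
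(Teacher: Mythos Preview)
Your proposal is correct and matches the paper's approach: the paper presents this corollary explicitly as an ``immediate consequence'' of the preceding theorem, with no further argument given. Your write-up simply unpacks that one-line deduction in the same way the Maass case passes from Theorem~\ref{thm:main_thm} to Corollary~\ref{cor:n-genuinely_new_maass_forms}.
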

We should also mention that there is one part of this paper which
does not go through directly to the setting of holomorphic modular
forms. Since the reflection $z\mapsto-\overline{z}$ is anti-holomorphic
we can not use this operator to force multiplicity. Without this tool
we are not able to prove that the operator $\St$ acting on the space
of newforms in $\mathcal{M}_{k}(9)$ possesses non-zero
eigenfunctions with eigenvalue $\zeta_{3}$ as well as $\zeta_{3}^{2}$.
We can therefore only say that there exists an infinite sequence of
holomorphic newforms on $\Gamma_{0}(9)$ which are either
invariant under twisting by $\chi=\left(\frac{\cdot}{3}\right)$ or
appear in pairs $f,g$ with $f_{\chi}=g$. The first case corresponds to forms of CM type and that the second
case corresponds to forms with inner twists. For a precise definition
and results related to inner twists see e.g.~\cite{MR0453647,MR594532,MR617867,MR2477510}.
For the sake of clarity we give a simple example of the situation
for holomorphic modular forms. 
\begin{example}
Consider the space of cusp forms of weight $12$ on $\Gamma_{0}(9)$.
Using e.g.~Sage \cite{sage} we see that $\mathcal{S}_{12}^{\n}(9)=\left\{ f,g,h\right\} $
with the following $q$-expansions ($q=e^{2\pi i\tau}$ for $\tau\in\H$):
\begin{align*}
f & (\tau)=q+24q^{2}-1472q^{4}-4830q^{5}-16744q^{7}-84480q^{8}-115920q^{10}-\cdots,\\
g & (\tau)=q-78q^{2}+4036q^{4}+5370q^{5}-27760q^{7}-155064q^{8}-418860q^{10}-\cdots,\\
h & (\tau)=q+xq^{2}+472q^{4}+224xq^{5}+58100q^{7}-1576xq^{8}+564480q^{10}-\cdots\end{align*}
where $x^{2}-2520=0$. The newform $h$ has two embeddings, $h^{\pm}$,
given by $\sigma^{\pm}:a+b\sqrt{2520}\mapsto a\pm b\sqrt{2520}$.
The embeddings act on the Fourier coefficients in the usual way, i.e.
if $h=\sum a_{n}q^{n}$ then $h_{\sigma^{\pm}}=\sum\sigma^{\pm}(a_{n})q^{n}.$
These expansions can be extended as far as necessary to prove the
following claims (in fact $13$ terms are enough): $f$ is a twist
of $\Delta\in\mathcal{S}_{12}(1)$ and $g$ is a twist
of the unique newform in $\mathcal{S}_{12}(3)$ (both are
twists by $\chi_{3}$). The factor $x$ is only present in the coefficients
$a_{n}$ of $h$ with $n\equiv2\mod3$ and hence $\sigma^{-}(a_{n})=\chi_{3}(a_{n})$,
i.e. $h_{\chi_{3}}^{+}=h^{-}$ meaning that $h$ has an inner twist
with $\chi_{3}$.
\end{example}

\subsection{Road map}

In Section \ref{sec:Background-Material} we will review the necessary
background material and definitions. First we will recall basic facts
about Fuchsian groups. In particular we will give detailed information
about those congruence subgroups which play a role in Theorem \ref{thm:main_thm}.
We will also introduce Maass waveforms, Hecke operators and the theory
of Maass newforms. The various subspaces of Maass waveforms mentioned in the Main Theorem
will then be properly defined. The section on background material
ends with a precise formulation of the Selberg trace formula for the
full modular group. 

The next section is devoted to a more detailed study of Maass waveforms
on the group $\Gamma^{3}$. In particular we will give a complete
description of the forms in this space which are associated to the modular group and
the maps taking forms on $\Gamma^{3}$ to forms on $\Gamma_{0}(9)$. In Section
\ref{sec:The-Main-Theorem} we give a precise definition of the genuinely
new forms and a formulation and proof of the Main Theorem. 

The most technical section of the paper is Section \ref{sub:The-Selberg-trace-subgroups}
which gives the details of the Selberg trace formula for all the subgroups
we need. The section concludes with the proof of the key result, that
the space of genuinely new forms on $\Gamma_{0}(9)$ is
empty. 

A slightly modified version of the Hecke theory for $\Gamma_{0}(9)$
is introduced in Section \ref{sub:Hecke-operators-onGamma09}, for
the purpose of demonstrating that the construction of multiplicities
is compatible with Hecke operators (Proposition \ref{prop:Hecke-pair}).

In the bulk of this paper we take a classical approach to Maass waveforms, that is, we treat them as functions on the complex upper half-plane.
An alternative approach is given in terms of automorphic representations. In Section \ref{sec:Some-Perspectives-from} we 
give an interpretation of our results in this setting. 
The last section contains a discussion on possible extensions of the results in this paper. 

\section{Background material}
\label{sec:Background-Material}
We will start with a brief recollection of basic facts concerning
Fuchsian groups. For a more thorough treatment, the reader may consult
any of a number of textbooks, for example \cite{MR0164033,MR698777,katok}
or the first chapter of \cite{miyake}, to name a few. 

\subsection{Hyperbolic geometry and Fuchsian groups}
\label{sub:hypgeom}
Let $\H=\left\{ z=x+iy\,|\, y>0\right\} $ be the hyperbolic upper
half-plane equipped with metric and area measure $ds=y^{-1}\left|dz\right|$
and $d\mu=y^{-2}dxdy$, respectively. The boundary of $\H$ is given
by $\partial\H=\R\cup\left\{ \infty\right\} $ and a geodesic
on $\H$ is either a half-circle perpendicular to the real axis or
a straight line parallel to the imaginary axis. The group of orientation
preserving isometries of $\H$ is $\PSLR\simeq\SLR/\left\{ \pm\Eins_{2}\right\} $
acting on $\H$ by M{\"o}bius transformations $\gamma:z\mapsto\gamma z=\frac{az+b}{cz+d}$.
Here $\Eins_{2}$ is the two-by-two identity matrix. Composition of
maps correspond to matrix multiplication and we will usually write
elements of $\PSLR$ as matrices in $\SLR$. For notational clarity
we will, whenever possible, use matrices with integer entries. For example, 
we will often use the following two maps in the remaining part of the paper: 
\begin{align}
\omega_{d}:&z\mapsto-\frac{1}{dz} \quad \text{and}\\
\A_{d}:&z\mapsto d\,z.
\end{align}
Instead of matrices with determinant one, we will use the matrices  
$\left(\begin{smallmatrix}0 & -1\\
d & 0\end{smallmatrix}\right)$ and 
$\left(\begin{smallmatrix} d & 0\\  0 & 1 \end{smallmatrix}\right)
$ to represent these maps.
Since we will never deal with actual elements of matrix groups this
should not cause any confusion, as long as it is understood that $\left(\begin{smallmatrix}a & b\\
c & d\end{smallmatrix}\right)$ and $\left(\begin{smallmatrix}\lambda a & \lambda b\\
\lambda c & \lambda d\end{smallmatrix}\right)$ are equal if $\lambda$ is a non-zero real number. 

If $\varphi:\H\rightarrow\C$ is a function then $\PSLR$
acts on $\varphi$ by the (weight zero) \emph{slash-action}: $\gamma\mapsto\varphi_{|\gamma}$,
where $\varphi_{|\gamma}(z)=\varphi(\gamma z)$.
The Laplace-Beltrami operator on $\H$ is given by $\Delta=-y^{2}\left(\frac{\partial^{2}}{\partial x^{2}}+\frac{\partial^{2}}{\partial y^{2}}\right)$
and since it only depends on the metric it commutes with the
action of $\PSLR$ (this can also be checked explicitly). Analogous
to the above action we can also define an action of $\PGLR$ on $\H$,
setting $\gamma z=\frac{a\overline{z}+b}{c\overline{z}+d}$ for $\gamma=\left(\begin{smallmatrix}a & b\\
c & d\end{smallmatrix}\right)$ and $ad-bc=-1$. In practice, the only map of this form we will need
is the reflection in the imaginary axis: $J:z\mapsto-\overline{z},$
which can be represented by $\left(\begin{smallmatrix}-1 & 0\\
0 & 1\end{smallmatrix}\right)$. 

A \emph{Fuchsian group} is a discrete subgroup of $\PSLR$, by which
we mean that if $M_{1},M_{2},\ldots$ is a sequence of elements in
the group and $M_{n}\rightarrow\Id$, the identity in $\PSLR,$
as $n\rightarrow\infty$, then there is an $M>0$ such that $M_{m}=\Id$
for all $m\ge M$. The orbit space, or orbifold, given by the quotient
of the upper half-plane by a Fuchsian group $\Gamma$, written $\Gamma\backslash\H:=\left\{ \Gamma z\,:\, z\in\H\right\} $
can be given a complex analytic structure which might be smooth or
contain singularities, either in the form of corners (elliptic points),
or of the form of punctures (parabolic points or cusps). It is common
to represent $\Gamma\backslash\H$ in the upper half-plane by a fundamental
domain, $\mathcal{F}_{\Gamma}$, which, for simplicity, we will always
assume to be closed (i.e. parts of the boundary are counted twice), convex and bounded by a finite number of geodesic
arcs (that is, we only consider Fuchsian groups which are geometrically
finite). The elliptic points and cusps correspond to points $z$ with
non-trivial stabilizer subgroup: $\Gamma_{z}:=\left\{ \gamma\in\Gamma\,|\,\gamma z=z\right\} $.
Such points necessarily appear on the on the boundary of $\mathcal{F}_{\Gamma}$
and are inside $\H$ (elliptic points) or on the boundary of $\H$
(cusps). The stabilizer of an elliptic point is cyclic of finite order
and the stabilizer of a parabolic point is infinite cyclic. The corresponding
elements of $\Gamma_{z}$ are also said to be elliptic or parabolic.
The only remaining type of elements in $\PSLR$ are the so-called
hyperbolic elements, possessing two different fixed points on $\partial\H$.
It is easy to identify the type of a M\"obius transformation from
the trace of its corresponding matrix: If $\gamma=\left(\begin{smallmatrix}a & b\\
c & d\end{smallmatrix}\right)$ (and $ad-bc=1$) then $\gamma$ is elliptic, parabolic or hyperbolic
if $\left|a+d\right|$ is less than, equal to, or greater than $2$,
respectively.

If $\mathcal{F}_{\Gamma}$ is compact, meaning that there are no cusps,
we say that $\Gamma$ is co-compact and if the hyperbolic area of
$\mathcal{F}_{\Gamma}$ is finite we say that $\Gamma$ is co-finite.
All Fuchsian groups we will encounter in this paper are co-finite
but not co-compact.

If $\Gamma'\subset\Gamma$ is a subgroup of finite index we can obtain
a fundamental domain of $\Gamma'$ by using right coset-representatives,
i.e. if $\Gamma=\sqcup_{i=1}^{\mu}\Gamma'R_{i}$ then $\mathcal{F}_{\Gamma'}=\sqcup_{i=1}^{\mu}R_{i}\mathcal{F}_{\Gamma}$
is a fundamental domain for $\Gamma'$. Here the symbol $\sqcup$
is understood to mean a disjoint union of (discrete) sets in the first
expression and a union of subsets of $\R^{2}$ with pair-wise
intersections having Lebesgue measure zero in the second expression.

\subsection{Examples of Fuchsian groups}

An elementary but important family of Fuchsian groups are the
{\em Hecke triangle groups}, $G_{n}$, generated by a reflection $S:z\mapsto-\frac{1}{z}$
and a translation $T_{\lambda}:z\mapsto z+\lambda$, where $\lambda=\lambda_{n}:=2\cos\frac{\pi}{n}$
and $n\ge3$ is an integer (for other values of $\lambda$ the resulting
group will not be Fuchsian). The standard fundamental domain, $\mathcal{F}_{n}:=\mathcal{F}_{G_{n}}$
of the Hecke triangle group $G_{n}$ can be taken as the hyperbolic
triangle with one vertex at infinity (a cusp) and the other two vertices
at the intersection of the unit circle with the vertical lines $x=\pm\frac{\lambda}{2}$
in the upper half-plane (these are elliptic points equivalent under
the group). In fact $\mathcal{F}_{n}$ also has one additional elliptic
point at $i$, meaning that $\mathcal{F}_{n}$ is in fact a rectangle,
but with one angle equal to $\pi$. 

A special case of a Hecke triangle group and the canonical example
of a Fuchsian group in number theory is the {\em modular group}, $\PSLZ$
($=G_{3}$) which has a simple presentation in terms of the generators
$S:z\mapsto-\frac{1}{z}$ and $T:z\mapsto z+1$ and relations $S^{2}=(ST)^{3}=\Id$.
The corresponding matrices are $S=\left(\begin{smallmatrix}0 & -1\\
1 & 0\end{smallmatrix}\right)$ and $T=\left(\begin{smallmatrix}1 & 1\\
0 & 1\end{smallmatrix}\right)$. 

For any integer $N\ge1$ there is a projection $\pi_{N}:\Z\rightarrow\Z/N\Z$,
$n\mapsto n\mod N$, which induces a surjective group homomorphism
$\pi_{N}:\PSLZ\rightarrow \mathrm{PSL}_{2}(\Z/N\Z)$.
Its kernel, $\Gamma(N)=\left\{ \gamma\in\PSLZ\,|\gamma\equiv\pm\Eins_{2}\mod N\right\}$,
is called the principal congruence subgroup of level $N$. A subgroup
of $\PSLZ$ which contains $\Gamma(N)$ as a subgroup,
but no $\Gamma(M)$ for $M<N$ is said to be a {\em congruence
subgroup} of level $N$. We are interested in the {\em Hecke
congruence subgroup} of level $N$, $\Gamma_{0}(N)=\left\{ \gamma=\left(\begin{smallmatrix}a & b\\
c & d\end{smallmatrix}\right)\in\PSLZ\,|\, c\equiv0\mod N\right\} $. The index of $\Gamma(N)$ and $\Gamma_{0}(N)$
in $\PSLZ$ is given by $\mu(N)=\frac{1}{2}N^{3}\prod_{p|N}\left(1-p^{-2}\right)$
and $\mu_{0}(N):=N\prod_{p|N}\left(1+p^{-1}\right)$, respectively,
where the products are taken over all primes $p|N$ (see for example \cite[Thm.~1.4.1 and (1.4.24)]{rankin:mod}). 

For a prime level, $p$, it is easy to verify directly that that the
index of $\Gamma_{0}(p)$ in $\PSLZ$ is $p+1$ and that
$R_{0}=\Id,R_{1}=S,R_{2}=ST,\ldots,R_{p}=ST^{p-1}$ is a
set of $p+1$ maps which are independent modulo $\Gamma_{0}(p)$
and therefore constitute a set of right coset representatives for
$\Gamma_{0}(p)\backslash \PSLZ$. For general non-prime
levels it is hard to find similar simple expressions. The best approach
is then to use the generators $S$ and $T$ to construct $\mu_{N}$
independent elements. This approach is also suitable for implementation on a computer.

For the purpose of this paper we need to study the subgroups $\Gamma_{0}(3)$,
$\Gamma_{0}(9)$, $\Gamma(3)$ and $\Gamma^{3}$ more closely. 
The group $\Gamma^3$ (which we will define in Example \ref{exa:Gamma^3} below) is an example of what is called a {\em cycloidal} subgroup of the modular
group, i.e. a group with only one cusp.
\begin{example}
\label{exa:gamma09}
The index of $\Gamma_{0}(9)$ in $\PSLZ$ is $\mu(9)=12$. Hence it is clear that the set 
$\{R_{1},\ldots,R_{10}\}$ with $R_1=\Id$ and $R_{j}=ST^{j-2}$ for $j=2,\ldots,10$ 
is not sufficient as coset representatives for $\Gamma_{0}(9)\backslash\PSLZ$.
It is however easy to verify that the two maps 
$R_{11}=ST^{-3}S=\left(\begin{smallmatrix}1 & 0\\
3 & 1\end{smallmatrix}\right)$ and 
$R_{12}=ST^{3}S=\left(\begin{smallmatrix}1 & 0\\
-3 & 1\end{smallmatrix}\right)$ are independent modulo $\Gamma_0(9)$, both of each other and of the set $\{R_1,\ldots,R_{10}\}$. 

The index of $\Gamma_{0}(9)$ in
$\Gamma_{0}(3)$ is $3$ and the maps $P_{1}=\Id$,
$P_{2}=ST^{3}S$ and $P_{3}=P_{2}^{2}=ST^{6}S$ satisfy $P_{2},P_{3}\in\Gamma_{0}(3)\backslash\Gamma_{0}(9)$.
Since $P_{3}P_{2}^{-1}=P_{2}$ and $P_{2}P_{3}^{-1}=P_{2}^{-1}$ these
maps are also independent modulo $\Gamma_{0}(9)$. We have
thus shown that the following set of maps is suitable as coset representatives: 
\[
\Gamma_{0}(9)\backslash\PSLZ\simeq\left\{ R_{1},\ldots,R_{12}\right\} \quad\textrm{and}\quad\Gamma_{0}(9)\backslash\Gamma_{0}(3)\simeq\left\{ P_{1},P_{2},P_{3}\right\}.
\]
Another fact which we will use later is that $\Gamma_{0}(9)$
is conjugate to $\Gamma(3)$: If $\gamma=\left(\begin{smallmatrix}a & b\\
9c & d\end{smallmatrix}\right)\in\Gamma_{0}(9)$ then $\A_{3}\gamma \A_{3}^{-1}=\left(\begin{smallmatrix}a & 3b\\
3c & d\end{smallmatrix}\right)$ and since $ad\equiv1\mod3$ implies that $a\equiv d\equiv\pm1\mod3$
it follows that ${\A}_{3}\Gamma_{0}(9)\A_{3}^{-1}=\Gamma(3)$. 
\end{example}

\begin{example}
\label{exa:gamma3}Since the index of $\Gamma(3)$ in $\PSLZ$
is $12$ it follows that we need $3$ coset representatives for $\Gamma(3)\backslash\Gamma_{0}(3)$.
A short calculation shows that we can take $R_{1}=\Id ,R_{2}=T$ and $R_{3}=T^{2}$
as coset representatives, i.e. $\Gamma(3)\backslash\Gamma_{0}(3)\simeq\left\{\Id,T,T^{2}\right\} $. 
\end{example}

\begin{example}
\label{exa:Gamma^3}Let $h:\PSLZ\rightarrow\Z/3\Z$
be defined by $h(A)=a_{0}+a_{1}+\cdots+a_{n}\mod3$ if $A=T^{a_{0}}ST^{a_{1}}\cdots ST^{a_{n}}$.
Although this representation of $A$ is not unique it is well-known
that the only relations in $\PSLZ$ are $S^{2}=(ST)^{3}=\Id$.
Therefore the function $h$ is well-defined and in fact a surjective group homomorphism. Its kernel, 
\begin{equation}
\Gamma^{3}=\left\{ A\in\PSLZ\,|\, h(A)=0 \right\}, \label{eq:Gamma^3-def}
\end{equation}
is therefore an index $3$ subgroup of $\PSLZ$,
with coset representatives $\left\{\Id,T,T^{2}\right\} $.
By using the free generators $S$ and $R=ST$ instead of $S$ and
$T$ to define the function $h$ it is not hard to show that $S$
together with $A_{1}$ and $A_{2}$ generate $\Gamma^{3},$ where
$A_{1}=R^{-1}SR=T^{-1}ST=\left(\begin{smallmatrix}-1 & -2\\
1 & 1\end{smallmatrix}\right)$, $A_{2}=R^{-2}SR^{2}=ST^{2}ST=\left(\begin{smallmatrix}-1 & -1\\
2 & 1\end{smallmatrix}\right)$ and $A_{1}^{2}=A_{2}^{2}=\Id$. The following alternative description
is sometimes useful:
\begin{equation}
\Gamma^{3}=\left\{ A\in\PSLZ,\,\pm A\equiv\Id,S,A_{1}\,\textrm{or}\,A_{2}\mod3\right\}.\label{eq:gamma3_congdef2}
\end{equation}
Let $\tilde{\Gamma}$ denote the right hand side. Since the generators of $\Gamma^3$ is in $\tilde{\Gamma}$ 
we have $\Gamma^{3}\subseteq\tilde{\Gamma}$. It is easy to verify that any
element $A$ of $\textrm{SL}_2\left(\Z/3\Z\right)$
can be written as a product $A=BT^{m}$ where $B$ is congruent modulo
$3$ to one of the matrices 
$\left(\begin{smallmatrix}1 & 0\\
0 & 1\end{smallmatrix}\right),\left(\begin{smallmatrix}2 & 0\\
0 & 2\end{smallmatrix}\right),\left(\begin{smallmatrix}0 & 2\\
1 & 0\end{smallmatrix}\right),\left(\begin{smallmatrix}0 & 1\\
2 & 0\end{smallmatrix}\right),\left(\begin{smallmatrix}2 & 1\\
1 & 1\end{smallmatrix}\right)$,
$\left(\begin{smallmatrix}1 & 2\\ 2 & 2\end{smallmatrix}\right)$,
$\left(\begin{smallmatrix}2 & 2\\ 2 & 1\end{smallmatrix}\right)$ 
or $\left(\begin{smallmatrix}1 & 1\\
1 & 2\end{smallmatrix}\right)$ (corresponding to $\tilde{\Gamma}$) and $m=0,1$ or $2$. It follows
that $\tilde{\Gamma}$ is a congruence subgroup of level $3$ and
has index $3$ in the modular group. Since $\Gamma^{3}$ also has
index $3$ it follows that $\Gamma^{3}=\tilde{\Gamma}$. In particular we see that $\Gamma^{3}$ is indeed a congruence subgroup of level $3$. The following 
description can be deduced from \eqref{eq:gamma3_congdef2}:
\begin{equation}
\Gamma^{3}=\left\{ A=\left(\begin{smallmatrix}a & b\\
c & d\end{smallmatrix}\right)\in\PSLZ\,|\, ab+cd\equiv0\mod3\right\}. \label{eq:gamma3_congdef1}
\end{equation}
The group $\Gamma^{3}$ and other (similarly defined) groups were studied in detail by Rankin \cite[ch.~1]{rankin:mod} (see also Fricke \cite[p.~532]{48.0432.01}). 
\end{example}

\subsection{Normalizers of Fuchsian groups}
\label{sub:Normalizers-of-Fuchsian}
Let $\Gamma$ be a co-finite Fuchsian group. The {\em normalizer} of $\Gamma$
in $\PSLR$, denoted $\Norm(\Gamma)$, is the group
consisting of all elements $\gamma$ in $\PSLR$ which leaves $\Gamma$
invariant under conjugation, i.e. $\Gamma=\gamma\Gamma\gamma^{-1}$.
In certain cases it is not so hard to study the normalizer in a geometric
manner, using the concept of maximality. We say that $\Gamma$ is
maximal if there does not exist any other Fuchsian group $G$ containing
$\Gamma$ as a subgroup of finite index (this is sometimes called
finitely maximal in the literature).

From the relationship between subgroups and fundamental domains, as expressed in the last paragraph of Section  \ref{sub:hypgeom}
we see that a geometric method to find out if a given group $F$
is maximal or not is to study tessellations of a fundamental domain for $F$
into equally sized parts (with respect to hyperbolic area measure) 
identified by elements of $\PSLR$ (recall that these preserve angles
and orientation). 

The simplest example of a non-co-compact fundamental domain is the
standard domain of a of Hecke triangle group $G_{q}$, which is, as
described above, simply a triangle with one angle equal to zero and
the other two angles equal to $\frac{\pi}{q}$. Since any supergroup
of $G_{q}$ must have precisely one cusp it is immediately clear that
the only possible partition of the fundamental domain is to split
it along the imaginary axis. However, the identification of these
parts is by the reflection $J:z\mapsto-\overline{z}$ which belongs
to $\PGLR$ but not $\PSLR$. It follows that all Hecke triangle groups
are maximal. For this and related results see Greenberg \cite{MR0148620}
and Beardon \cite{beardon:isom}. 

If $\omega\in\Norm(\Gamma)$ and we let $\Gamma^{*}=\left\langle \Gamma,\omega\right\rangle$
 be the group generated by the elements of $\Gamma$ together with
$\omega$ then $\Gamma^{*}$ contains $\Gamma$ as a subgroup. It
follows that a maximal Fuchsian group must have a trivial normalizer
and to find the normalizer of a given group it is therefore enough
to find a minimal set of elements which we can use to extend $\Gamma$
to a maximal Fuchsian group. 

For $\Gamma_{0}(p)$ with $p=2$ or $3$ it is straight-forward
to check that $\Norm(\Gamma_{0}(p))=\Gamma^{*}_{0}(p)$
where 
$\Gamma^{*}_{0}(p)=\Gamma_{0}(p)\sqcup \Gamma_0(p)\omega_{p}$
with $\omega_{p}:z\mapsto-\frac{1}{pz}$ the so-called Fricke involution
(cf.~\cite[p.\ 357]{48.0432.01}): simply observe that the dilation
$\A_{\sqrt{p}}:z\mapsto\sqrt{p}z$ conjugates $\Gamma^{*}_{0}(p)$
to $G_{\sqrt{p}}$, which is maximal. It is in fact true for all primes
$p$ that 
\[
\Norm(\Gamma_{0}(p))/\Gamma_{0}(p)=\left\langle \omega_{p}\right\rangle.
\]
Note that $\omega_{p}^{2}=\Id$. Since the group $\Gamma^{*}_{0}(p)$
is in general not a triangle group the corresponding maximality argument
becomes more involved. The signature of $\Gamma^{*}_{0}(p)$
can be obtained from \cite{MR0002358} (see also \cite[pp.\ 363,366]{48.0432.01})
and we could then try to match this signature with the list of signatures
corresponding to maximal Fuchsian groups in \cite{MR0322165}. The
most general formulas for the normalizer of $\Gamma_{0}(N)$
(general $N$) are proved using a more algebraic approach with matrix
calculations. A general formula for the elements of the normalizer
is given explicitly by Lehner and Newman \cite{MR0161841} and in
a slightly more comprehensive form by Atkin and Lehner \cite{atkinlehner}.
It turns out that for each prime $Q$ dividing $N$ the normalizer
contains an involution, $W_{Q}$, of the form 
$W_{Q}=\left(\begin{smallmatrix}Qx & y\\ Nz & Qw\end{smallmatrix}\right)$ 
with $x,y,z,w\in\Z,$ $x\equiv1\mod\frac{N}{Q},$ $y\equiv1\mod Q$
and $Q^{2}xw-Nyz=Q$. These are usually called Atkin-Lehner involutions.
Although the latter papers contain correct formulas for the elements
of the normalizer, the stated formulas for the group structure of
$\Norm(\Gamma_{0}(N))/\Gamma_{0}(N)$
contains minor errors in some cases when $N$ is divisible by either $4$ or $9$.
The correct group structure can be found in e.g. Akbas and Singerman
\cite{MR1073672} or Bars \cite{MR2418382}.

For the most important group in this paper, $\Gamma_{0}(9)$,
the situation is slightly different. Consider the maps $\omega_9$ and $\St$. These are in the normalizer of $\Gamma_0(9)$ and are given by
\[
\omega_{9}:z\mapsto-\frac{1}{9z},\quad\textrm{and}\quad\St:z\mapsto z+\frac{1}{3}.
\]
Define 
$\Gamma^{*}_{0}(9)=\Gamma_{0}(9)\sqcup \Gamma_0(9)\omega_{9}\sqcup \Gamma_0(9)\St$.
We know that $\Gamma_{0}(9)$ has four inequivalent cusps,
which can be represented by $0,-\frac{1}{3},\frac{1}{3}$ and $\infty$
(cf.~e.g.~\cite[Prop.~2.6]{Iwaniec:topics}). Since $\omega_{9}(0)=\omega_{9}\St(\frac{-1}{3})=\omega_{9}\Stinv(\frac{1}{3})=\infty$
it is clear that $\Gamma^{*}_{0}(9)$ has only one cusp.
It is also readily verified that (recall that $\A_{3}z=3z$) 
$\A_{3}\omega_{9}\A_{3}^{-1}=S$, $\A_{3}\St \A_{3}^{-1}=T$ and $\A_{3}\gamma \A_{3}^{-1}\in\Gamma_{0}(3)$
for any $\gamma\in\Gamma_{0}(9)$. That is, $\A_{3}\Gamma^{*}_{0}(9)\A_{3}^{-1}=\PSLZ$,
and in particular, it follows that $\Gamma^{*}_{0}(9)$
is maximal. Since $(\St)\,^{3}=T\in\Gamma_{0}(9)$, $\omega_{9}^{2}=\Id$
and $(\omega_{9}\St)^{3}=\Id$ we see that the normalizer of $\Gamma_0(9)$ modulo $\Gamma_0(9)$ is isomorphic to 
a group with the following presentation:
\[
\Norm\left(\Gamma_{0}(9)\right)/\Gamma_{0}(9)\simeq\left\langle x,y\,|\, x^{2}=y^{3}=(xy)^{3}=1\right\rangle 
\]

From the definition \eqref{eq:Gamma^3-def} we see that
$T$ is in the normalizer of $\Gamma^{3}$. Since $S\in\Gamma^{3}$
it follows that $\Norm(\Gamma^{3})=\PSLZ$ and hence
\[
\Norm\left(\Gamma^{3}\right)/\Gamma^{3} \simeq\left\langle x\,|\, x^{3}=1\right\rangle 
\]

It is also possible to consider the normalizer of $\Gamma_{0}(N)$
in $\PGLR$ instead of $\PSLR$. It is not hard to verify that $J:\mapsto-\overline{z}$
is an involution of all $\Gamma_{0}(N)$ and that in fact
the normalizer of $\Gamma_{0}(N)$ in $\PGLR$ is the group
generated by $\Norm\left(\Gamma_{0}(N)\right)$ together
with $J$.

Observe that $J$ commutes with the Fricke involutions $\omega_{N}$
but $JTJ=T^{-1}$ so the normalizer of $\Gamma^{3}$ in $\PGLR$ modulo
$\Gamma^{3}$ has the presentation $\left\langle x,y\,|\, x^{3}=y^{2}=yxyx=1\right\rangle $.

\subsection{Maass waveforms}
\label{sub:Maass-waveforms}
Let $\Gamma\subseteq\PSLZ$ be a congruence subgroup of level $N$.
It is well-known (cf.~e.g.~\cite{iwaniec:spectral}) that the discrete
spectrum of $\Delta$ on $\Gamma\backslash\H$ is spanned by
{\em Maass waveforms}, that is, real-analytic eigenfunctions of
$\Delta$ on $\H$, which are invariant under $\Gamma$. Furthermore,
in this case we know that these forms are cuspidal, i.e. vanish
at the cusps of $\Gamma$, and have finite $L^{2}$-norm induced by
the Petersson inner product:
\begin{equation}
\left\langle f,g\right\rangle _{\Gamma}=\int_{\mathcal{F}_{\Gamma}}f(z)\overline{g(z)}d\mu.\label{peterssoninnerp}
\end{equation}
The integral above can be taken over any closed fundamental domain
$\mathcal{F}_{\Gamma}$ of $\Gamma$. {\em It is tacitly understood
that, in the remainder of the paper, all mentioned Maass waveforms
are cusp forms}. The space of all Maass waveforms on $\Gamma$ with
Laplace eigenvalue $\lambda=\frac{1}{4}+R^{2}$ is denoted by $\mathcal{M}\left(\Gamma,R\right)$
and in the case $\Gamma=\Gamma_{0}(N)$ we also write $\mathcal{M}\left(N,R\right)$. 

Since the Laplacian commutes with the action of $\PGLR$ it is easy
to check that all operators (acting on Maass waveforms) which we consider
in this paper preserve the Laplace eigenvalue. \emph{We will therefore
sometimes let $\mathcal{M}\left(\Gamma\right)$ or $\mathcal{M}(N)$
denote a generic non-empty eigenspace (and omit the spectral parameter
$R$).} The theory of Hecke operators on holomorphic modular
forms carry over with only minor changes to Maass waveforms. See e.g.~\cite[III]{strombergsson:thesis}. 
\begin{defn}
\label{def:Hecke-operator-Gamma0N}For integers $n,N\ge 1$ we define
the Hecke operator $T_{n}$ of level $N$ by 
\[
T_{n}\varphi(z)=\frac{1}{\sqrt{n}}\sum_{\underset{(a,N)=1,\, a>0}{ad=n}}\sum_{b\mod d}\varphi\left(\frac{az+b}{d}\right)
\quad \textrm{for any}\quad \varphi\in\mathcal{M}(N).
\]
\end{defn}
Note that $T_1=\Id$ and with the analogous definition for $n=-1$ we get $T_{-1}=J$. 
The operators $T_{n}$ satisfy all the properties familiar from the
holomorphic theory. In particular, the set of operators $T_{n}$ with $(n,N)=1$
is a family of commuting, self-adjoint operators satisfying the following
relation for $m,n\in\Z^{+}$: 
\[
T_{m} T_{n}=\sum_{\underset{(d,N)=1,\, d>0}{d|(m,n)}}T_{\frac{mn}{d^{2}}}.
\]
Having a Hecke theory, we will now discuss the space
of oldforms and its orthogonal complement the space of newforms, in
the usual sense of Atkin-Lehner \cite{atkinlehner}. 

A fundamental lemma in the theory of oldforms is that if $M$ and 
$N$ are positive integers with $N=Md$ with $d>1$ then are are two
distinguished ways to map a function $\varphi\in\mathcal{M}(M)$ to
a function $\tilde{\varphi}\in\mathcal{M}(N)$. 
Since $\Gamma_{0}(N)\subset\Gamma_{0}(M)$ and
it follows that $\varphi\in\mathcal{M}(N)$. In addition it
is easy to verify that $\A_{d}\Gamma_{0}(N)\A_{d}^{-1}\subseteq\Gamma_{0}(M)$
and therefore $\varphi_{|\A_{d}}(z)=\varphi\left(dz\right)$ also satisfies
$\varphi_{|\A_{d}}\in\mathcal{M}(N).$ The space of old
Maass waveforms, or simply {\em oldforms}, on $\Gamma_{0}(N)$,
denoted by $\mo(N)=\oplus_{R}\mo\left(N,R\right)$, is
defined as the linear span of all forms of the type $f(z)$
and $f\left(dz\right)$ with $f\in\mathcal{M}\left(N/d,R\right)$,
$d|N$ and $d\ne1$. The {\em new space}, $\mn(N)$, is then
defined as the orthogonal complement of the old space, with respect
to the Petersson inner product. The most important property of the
decomposition into the old and new subspace is that both spaces are
invariant under the Hecke operators $T_{n}$ with $(n,N)=1$.
The following definition is then the direct analogue of the classical
definition in \cite{atkinlehner} for holomorphic forms.
\begin{defn}
\label{def:newformGamma0N}By a (normalized) \emph{Maass newform}
on $\Gamma_{0}(N)$ we mean a $\varphi\in\mn\left(N,R\right)$
which is an eigenfunction of all Hecke operators $T_{n}$ with $(n,N)=1$
and of the reflection $J$. Additionally, we assume it is normalized
to have first Fourier coefficient at infinity equal to $1$. 
\end{defn}
It is well-known that each $\mn\left(N,R\right)$ has an orthogonal
(finite) basis of Maass newforms (cf.~e.g. ~\cite[Thm.\ 4.6, p.\ 94]{strombergsson:thesis}).
By a direct calculation (or using \cite[lemma 17]{atkinlehner}) 
we see that the Hecke operators $T_{n}$ with $(n,N)=1$,
together with the Fricke involution $\omega_{N}$ and the other Atkin-Lehner
involutions $W_{Q}$, with $Q$ dividing $N$, form a commuting family
of normal, linear operators on $\mn\left(N,R\right)$. 

Since $J\left(\begin{smallmatrix}a & b\\
c & d\end{smallmatrix}\right)J^{-1}=\left(\begin{smallmatrix}a & -b\\
-c & d\end{smallmatrix}\right)$ it follows that $J$ commutes with the Hecke operators and the Fricke
involution but $JW_{Q}J^{-1}=W_{-Q}$ and $JTJ^{-1}=T^{-1}$. If we
want to desymmetrize the space $\mathcal{M}(N)$ as much
as possible we therefore might have to make a choice: either include all
Atkin-Lehner involutions, or only some of them, together with $J$.
For $\Gamma_{0}(9)$ we also have the problem that $\St$
neither commutes with $J$ nor all Hecke operators. The Hecke theory
for $\Gamma_{0}(9)$ will be studied further in Section
\ref{sub:Hecke-operators-onGamma09}. 

Analogous to the above definition for groups of the type $\Gamma_{0}(N)$
we also define the space of oldforms on $\Gamma^{3},$ denoted $\mo\left(\Gamma^{3}\right)$
as the subspace of $\mathcal{M}(\Gamma^3)$ spanned by ``lifts'' of forms from $\mathcal{M}(1)$. As we will see in Corollary
\ref{lem:The-only-lift} the only way to achieve such a form is the trivial
one (by inclusion of $\Gamma^3$ in $\PSLZ$). The space of newforms on $\Gamma^{3}$, $\mn\left(\Gamma^{3}\right)$
is then defined as the orthogonal complement of the space of oldforms.
To avoid introducing Hecke operators on $\Gamma^{3}$ we simply use
the term newform for any element of $\mn\left(\Gamma^{3}\right)$. 

In analogy with the holomorphic case we say that a Maass form $f$
is of CM-type if it possesses a self-twist, that is, if there exists
a Dirichlet character $\chi$ such that $f_{\chi}=f$. For a more
precise discussion of CM-forms with more references see e.g.~\cite[Ch.~3]{andreas:small_ev}
and \cite{CM}. The only thing we need to know about CM-forms is that
there are very few compared to the remaining forms of ``general
type'' (in fact there are none on $\Gamma_{0}(9)$).

\subsection{Twists and forms associated to other groups}
\label{sub:Lifts-and-twists}
The action of the Hecke operators (and the fact that an infinite subset of these form a commutative algebra with special multiplicative properties) is one of the most important tools
in the study of automorphic forms on congruence subgroups. It is therefore
natural to require that any systematic way of obtaining automorphic
forms on one congruence subgroup from those on another should be compatible
with this action. There are two prominent methods to do this: \emph{level raising} (i.e. producing oldforms by the inclusion 
or by the map $\A_d$) and \emph{twisting}.

Suppose that $\Gamma$ and $\Gamma'$ are two congruence subgroups, 
$\varphi\in\mathcal{M}\left(\Gamma\right)$ and 
$A\in\PGLQ$ is such that $A\Gamma'A^{-1}\subseteq\Gamma$.
Then $\varphi_{|A}\in\mathcal{M}\left(\Gamma'\right)$. It is also
easy to verify that if, for any $\alpha\in\PSLQ$, $\Gamma\alpha\Gamma$
is commensurable with $\Gamma$ (i.e. $\Gamma\alpha\Gamma\cap\Gamma$
has finite index in both $\Gamma\alpha\Gamma$ and $\Gamma$) then
$\Gamma'\tilde{\alpha}\Gamma'$ is commensurable with $\Gamma'$,
where $\tilde{\alpha}=A^{-1}\alpha A\in\PSLQ$. Thus, with the more
general definition of Hecke operators as double coset operators as
in e.g.~Miyake \cite[ch.~5]{miyake} or Shimura \cite{shimura} it
is clear that this type of correspondence is compatible with the Hecke
operators. Note that, however, the conjugated
operator $A^{-1}T_{n}A$ on $\Gamma'$ might be different from 
$T_{n}$ on $\Gamma$. 

We have already seen one example of the previous construction: In
the definition of oldforms we had $\Gamma=\Gamma_{0}(M)$,
$\Gamma'=\Gamma_{0}\left(Md\right)$ and $A=\A_{d}$. We will use the following example later.
\begin{example}
\label{exa:lift-fromGamma^3toGama09}From Example \ref{exa:gamma09}
we know that $\A_{3}\Gamma_{0}(9)\A_{3}^{-1}=\Gamma(3)\subseteq\Gamma^{3}$
and hence $\A_{3}$ provides a map from $\mathcal{M}\left(\Gamma^{3}\right)$
to $\mathcal{M}\left(\Gamma_{0}(9)\right).$ 
\end{example}
If $m$ is a positive integer, a Dirichlet character $\chi$ modulo
$m$ is a multiplicative function of period $m$ with $\chi(n)=0$
if $\left(m,n\right)>1$ and $\chi(1)=1$. The smallest
period of $\chi$ is called the conductor of $\chi$. 
\begin{defn}
\label{def:twist}If $\varphi\in\mathcal{M}(N)$ and $\chi$
is a Dirichlet character of conductor $q$ we define the {\em twist
of} $\varphi$ {\em by} $\chi$ as 
\[
\varphi_{\chi}(z)=\tau\left(\overline{\chi}\right)^{-1}\,\sum_{n\mod q}\overline{\chi}(n)\varphi\left(z+\frac{n}{q}\right)
\]
where $\tau\left(\chi\right)=\sum_{n\mod q}\chi(n)e\left(\frac{n}{q}\right)$
is the usual Gauss sum (cf.~e.g.~Miyake \cite[Cor.\ 3.3.2.]{miyake}). 
\end{defn}
An alternative definition of $\varphi_{\chi}$  is that if $\varphi$
has Fourier coefficients $a_{n}$ with respect to the cusp at infinity,
then $\varphi_{\chi}$ has coefficients $\chi(n)a_{n}$
at the same cusp. It is straight-forward to verify that \cite[Lemma 4.3.10]{miyake}
also holds for Maass waveforms. From this lemma we first of all conclude
that the two definitions are equivalent. Second, we also conclude
that if $\varphi\in\mn(N)$ then $\varphi_{\chi}\in\mathcal{M}(M)$
with $M=\mbox{lcm}\left(N,q^{2}\right)$. 

The advantage of working with the first definition is that it is an
explicit formula
which relates twisting by $\chi_{3}=\left(\frac{\cdot}{3}\right)$
in a natural way to the map $\St$ in the normalizer of $\Gamma_{0}(9)$.
It is otherwise more common to work with the definition in terms of
Fourier coefficients.  In this case, if $\varphi$
is a Hecke eigenform with eigenvalues $\lambda_{p}$ then $\varphi_{\chi}$
is a Hecke eigenform with eigenvalues $\chi(p)\lambda_{p}$
for $(p,N)=(p,q)=1$.

Using the definition above, an elementary matrix calculation shows directly that $\varphi_{\chi}$ is invariant under the group $\Gamma_{0}(M)$.
However, to determine the exact level of $\varphi_{\chi}$ for a general starting level seems
to be hard using only elementary methods. 
In Lemma \ref{lem:twists-are-new-and-orthog} we will see that in
the cases we need, if $\varphi$ is a newform, then $\varphi_{\chi}$ is
in fact a newform on $\Gamma_{0}(9)$.

\subsection{The Selberg trace formula for the modular group}
\label{sub:The-Selberg-Trace}

Let $\Gamma$ be a co-finite Fuchsian group and let $0=\lambda_{0}<\lambda_{1}\le\cdots$
denote the discrete spectrum of $\Gamma$ (counted with multiplicity).
We also write $\lambda_{n}=\frac{1}{4}+r_{n}^{2}$ with $r_{n}\in\R\cup i\left[0,\frac{1}{2}\right]$
and set $\sigma\left(\Gamma\right)=\{r_{n}\,|\,n\in \N_0\}$. 
Let $h\left(r\right)$ be an even analytic function and let $g(u)=\frac{1}{2\pi}\int_{-\infty}^{\infty}h\left(r\right)e^{-iur}dr$
 be the Fourier transform of $h$.
If there exists a $\delta>0$ such that $h=O(\left|r\right|^{-\left(2+\delta\right)})$
in the strip $\left|\Im(r)\right|\le\frac{1}{2}+\delta$  as $\Re(r)\rightarrow \infty$ then the
 {\em Selberg trace formula} (cf.~e.g.~\cite{hejhal:lnm548,hejhal:lnm1001}) says that 
\begin{equation}
\sum_{r_{n}\in \sigma(\Gamma)}h(r_{n})=I(\Gamma)+E(\Gamma)+H(\Gamma)+P(\Gamma)
\label{eq:STF}
\end{equation}
where $I,$ $E$, $H$, $P$ denotes the contributions of the identity,
elliptic, hyperbolic and parabolic conjugacy classes in $\Gamma$.
These terms all depend on $h$ but to simplify the notation we will omit this dependence.
By using a suitable test function it is possible to prove the analogue of Weyl's
law for the (possibly non-compact) orbifold $\Gamma\backslash\H$:
\begin{equation}
N_{\Gamma}\left(T\right)-M_{\Gamma}\left(T\right)=\frac{\mbox{vol}\left(\mathcal{F}_{\Gamma}\right)}{4\pi}T^{2}+O\left(T\ln T\right),\quad\mbox{as}\quad T\rightarrow\infty.\label{eq:weylslaw}
\end{equation}
Here $N_{\Gamma}\left(T\right)=\left|\left\{ r_{n}\in\sigma\left(\Gamma\right),\, r_{n}\le T\right\}\right| $
is the counting function for the discrete spectrum and $M_{\Gamma}(T)$
is essentially a counting function for the continuous spectrum, related
to the winding number for $\varphi_{\Gamma}$, the scattering determinant
of $\Gamma$. For a generic (non-arithmetic) Fuchsian group it is
believed (cf.~e.g.~Phillips-Sarnak \cite{phillips-sarnak:85,phillips_sarnak:85:2})
that $M_{\Gamma}$ dominates and that the discrete spectrum is at
most finite. For cycloidal and congruence
subgroups of the modular group it is possible to show that $M_{\Gamma}\left(T\right)=O\left(T\ln T\right)$
as $T\rightarrow\infty$ and therefore there exists an infinite number
of discrete eigenvalues in these cases. For an overview of Weyl's
law in different settings see e.g.~\cite{analysis-of-evolution}.
For the case of Fuchsian groups in general and congruence congruence
groups in particular see also e.g.~\cite[Thm.\ 2.28 and Ch.\ 11]{hejhal:lnm1001},
Venkov \cite{MR88h:11035} and Risager \cite{MR2098479}. 
To compute the right hand side of Weyl's law \eqref{eq:weylslaw} when $\Gamma$ is a subgroup of $\SLZ$ of index $\mu$ we recall that 
$\textrm{vol}\left(\mathcal{F}_{\SLZ}\right)=\frac{\pi}{3}$
 so that the main term is $\frac{\mu}{12}T^2$.

We will now give the explicit form of the terms above in the case
of the modular group. For simplicity we denote the corresponding terms
by $I_{1},$ $E_{1},$ $H_{1}$ and $P_{1}$ respectively. We also
write $E_{1}=E_{1}(2)+E_{1}(3)$ where $E_{1}(m)$
denotes the contribution from all elliptic classes of order $m$.
By using e.g.~\cite[p.\ 209]{hejhal:lnm1001} we obtain 
\begin{align}
I_{1}= & \frac{1}{12}\int_{-\infty}^{\infty}rh\left(r\right)\tanh\pi r\, dr,\label{eq:id-term}\\
E_{1}(2)= & \frac{1}{4}\int_{0}^{\infty}h\left(r\right)\frac{1}{\cosh\pi r}dr,\nonumber \\
E_{1}(3)= & \frac{2}{3\sqrt{3}}\int_{0}^{\infty}h\left(r\right)\frac{\cosh\frac{\pi r}{3}}{\cosh\pi r}dr\quad\mbox{and}\nonumber \\
P_{1}= & \frac{1}{4}h(0)\left[1-\varphi_{1}\left(\frac{1}{2}\right)\right]-g(0)\ln2\label{eq:parab-term}\\
 & +\frac{1}{4\pi}\int_{-\infty}^{\infty}h\left(r\right)\frac{\varphi'_{1}}{\varphi_{1}}\left(\frac{1}{2}+ir\right)dr-\frac{1}{2\pi}\int_{-\infty}^{\infty}h\left(r\right)\frac{\Gamma'\left(1+ir\right)}{\Gamma\left(1+ir\right)}dr\nonumber \end{align}
where 
\begin{equation}
\varphi_{1}(s):=\varphi_{\PSLZ}(s)=\frac{\Lambda\left(2-2s\right)}{\Lambda\left(2s\right)}\label{eq:scattering-det-for-pslz}
\end{equation}
and $\Lambda(s)=\pi^{-\frac{s}{2}}\Gamma\left(\frac{s}{2}\right)\zeta(s)$
is the completed Riemann zeta function. Although the precise form
of the hyperbolic term is not important for the proof later we will
describe it anyway, for the sake of completeness. A hyperbolic element
$P\in\PSLZ$ has two different fixed points on $\R\cup\left\{ \infty\right\} $
and acts as a dilation on the geodesic $\gamma_{P}$ which connects
them. That is, $P$ is conjugate in $\PSLR$ to a map of the form
$\A_{N\left(P\right)}:z\mapsto N\left(P\right)z$ (by virtue of the
map which takes one fixed point to $0$ and the other to $\infty$).
The number $N\left(P\right)$ is sometimes called the norm of $P$
and $\ln N\left(P\right)$ is the (hyperbolic) length of the closed
geodesic on $\PSLZ\backslash\H$ given by the projection of $\gamma_{P}$.
We use $P_{0}$ to denote the hyperbolic element of smallest norm
such that $P=P_{0}^{m}$ for some $m\in\N.$ If $P=P_{0}$
we say that $P$ is primitive. It is now clear that there is
a bijection between hyperbolic conjugacy classes and closed geodesics
on $\PSLZ\backslash\H$. The hyperbolic contribution to the Selberg
trace formula can now be written as 
\[
H_{1}=\sum_{\left\{ P\right\} }\frac{\ln N\left(P_{0}\right)}{N\left(P\right)^{\frac{1}{2}}-N\left(P\right)^{-\frac{1}{2}}}g\left(\ln N\left(P\right)\right)
\]
where the sum runs over all hyperbolic conjugacy classes in $\PSLZ$.
By collecting terms corresponding to the same length we can write
\[
H_{1}=\sum_{n=0}^{\infty}c_{n}g\left(x_{n}\right)
\]
where $0<x_{0}<x_{1}<\cdots\rightarrow\infty$ is the \emph{length
spectrum} of the modular surface, i.e. the sequence of (distinct)
lengths of its geodesics, and $c_{n}>0$ are constants. Note that
we sum over \emph{all }lengths, and not only the primitive ones. The
length spectrum of a subgroup is clearly contained in that of the
larger group. Hence, if $\Gamma$ is any finite index subgroup $\Gamma$
of the modular group then\[
H\left(\Gamma\right)=\sum_{n=0}^{\infty}c_{n}\left(\Gamma\right)g\left(x_{n}\right)\]
with the same lengths $x_{n}$ and some other non-negative constants
$c_{n}\left(\Gamma\right)$. In section \ref{sub:The-Selberg-trace-subgroups}
we will see how the other terms in the Selberg trace formula for subgroups
can be expressed in terms of those for the modular group.

\section{Maass cusp forms on $\Gamma^{3}$ and $\Gamma_{0}(9)$}

In this section we will present technical lemmas about the precise
nature of the maps between spaces of Maass waveforms introduced in Section \ref{sub:Lifts-and-twists}.
First we will discuss maps from $\PSLZ$ into $\Gamma^{3}$ and from
$\Gamma^{3}$ into $\Gamma_{0}(9)$. Then we will discuss
twisting of forms on $\PSLZ$ and $\Gamma_{0}(3)$ into
forms on $\Gamma_{0}(9)$.

\subsection{Maass waveforms on $\Gamma^{3}$}
\label{sub:Maass-waveforms-onGamma^3}
From Section \ref{sub:Normalizers-of-Fuchsian} we know that the normalizer
of $\Gamma^{3}$ has two elements $J:z\mapsto-\overline{z}$ and $T:z\mapsto z+1$,
satisfying $J^{2}=T^{3}=JTJT=\Id$ (modulo $\Gamma^{3}$). 
Using \eqref{peterssoninnerp} and a change of variables $z\mapsto z+1$ in the integral, together with the fact that the quotient $\Gamma^3\backslash \H$ 
is invariant under this map, it is easy to check that $T:f\mapsto f_{|T}$ is a unitary operator of
order three on $\mathcal{M}\left(\Gamma^{3},R\right)$. We hence obtain
an orthogonal decomposition: 
\begin{equation}
\mathcal{M}\left(\Gamma^{3},R\right)=\mathcal{M}\left(\Gamma^{3},R\right)^{(0)}\oplus\mathcal{M}\left(\Gamma^{3},R\right)^{(1)}\oplus\mathcal{M}\left(\Gamma^{3},R\right)^{(-1)}\label{eq:MGamma3-T-decomp}
\end{equation}
corresponding to the eigenvalues $1,\zeta_{3},\zeta_{3}^{-1}$ of
$T$ on this space. Here $\zeta_{3}:=e^{\frac{2\pi i}{3}}$. Since
$\PSLZ$ is generated by $S\in\Gamma^{3}$ together with $T$ 
we see immediately that 
\[
\mo\left(\Gamma^{3},R\right)=\mathcal{M}\left(\Gamma^{3},R\right)^{(0)}\quad\mbox{and\quad}\mn\left(\Gamma^{3},R\right)=\mathcal{M}\left(\Gamma^{3},R\right)^{(1)}\oplus\mathcal{M}\left(\Gamma^{3},R\right)^{(-1)}.
\]
Since $JTJ=T^{-1}$ it follows that if $\varphi\in\mathcal{M}\left(\Gamma^{3},R\right)^{(1)}$
then $\varphi_{|J}\in\mathcal{M}\left(\Gamma^{3},R\right)^{(-1)}$
(and vice versa) and we immediately deduce the following lemma. 
\begin{lemma}
\label{lem:dim-newforms-Gamma3istwo}If the space $\mn\left(\Gamma^{3},R\right)$
is non-empty, then its dimension is at least two.
\end{lemma}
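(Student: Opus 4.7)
The plan is to exploit the two non-commuting involutions in the normalizer of $\Gamma^{3}$ noted at the end of Section \ref{sub:Normalizers-of-Fuchsian}: the translation $T$ (of order three modulo $\Gamma^{3}$) which induces the decomposition \eqref{eq:MGamma3-T-decomp}, and the anti-holomorphic reflection $J$, which satisfies $JTJ=T^{-1}$ modulo $\Gamma^{3}$. Since $J$ normalizes $\Gamma^{3}$ and commutes with $\Delta$, it preserves $\mathcal{M}(\Gamma^{3},R)$. The idea is to show that $J$ swaps the two summands $\mathcal{M}(\Gamma^{3},R)^{(1)}$ and $\mathcal{M}(\Gamma^{3},R)^{(-1)}$ of $\mn(\Gamma^{3},R)$, so neither summand can be non-zero without forcing the other to be non-zero as well.

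The first step is the verification of the intertwining. Using the composition rule $(\varphi_{|A})_{|B}=\varphi_{|AB}$ for the slash action together with $JT=T^{-1}J$, one finds that for $\varphi\in\mathcal{M}(\Gamma^{3},R)^{(\varepsilon)}$, $\varepsilon\in\{\pm1\}$,
\[
(\varphi_{|J})_{|T}=\varphi_{|JT}=\varphi_{|T^{-1}J}=\zeta_{3}^{-\varepsilon}\,\varphi_{|J},
\]
so $\varphi_{|J}\in\mathcal{M}(\Gamma^{3},R)^{(-\varepsilon)}$. Since $J^{2}=\Id$, the map $\varphi\mapsto\varphi_{|J}$ is a linear isomorphism between $\mathcal{M}(\Gamma^{3},R)^{(1)}$ and $\mathcal{M}(\Gamma^{3},R)^{(-1)}$; in particular these summands have equal dimension.

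With this in hand the conclusion is immediate. Assume $\mn(\Gamma^{3},R)\ne\{0\}$. By the decomposition, at least one of $\mathcal{M}(\Gamma^{3},R)^{(\pm1)}$ is non-zero, and by the previous step both are. Picking any non-zero $\varphi\in\mathcal{M}(\Gamma^{3},R)^{(1)}$, the pair $\{\varphi,\varphi_{|J}\}$ lies in the two distinct eigenspaces of $T$ (with eigenvalues $\zeta_{3}$ and $\zeta_{3}^{-1}$), hence is linearly independent in $\mn(\Gamma^{3},R)$, giving $\dim\mn(\Gamma^{3},R)\ge 2$. There is no real obstacle here; the only care needed is to keep the slash-action conventions consistent so that the relation $JTJ=T^{-1}$ translates correctly into the claim that $J$ swaps the two $T$-eigenspaces.
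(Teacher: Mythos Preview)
Your argument is correct and is precisely the one the paper uses: the relation $JTJ=T^{-1}$ forces $J$ to interchange $\mathcal{M}(\Gamma^{3},R)^{(1)}$ and $\mathcal{M}(\Gamma^{3},R)^{(-1)}$, so non-triviality of $\mn(\Gamma^{3},R)$ gives two linearly independent vectors. You have simply spelled out in full the verification that the paper leaves implicit.
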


\subsection{Maps into $\mathcal{M}(\Gamma^{3})$}
\label{sub:Lifts-into-.Gamma3}
An important part of the proof of the Main Theorem is the existence
of newforms on $\Gamma^{3}$ (cf. last paragraph of Section \ref{sub:Lifts-and-twists}).
The precise statement of the existence is given in Proposition \ref{prop:There-exists-R}
below and for the proof we need the next two lemmas.
\begin{lemma}
\label{lem:MinPSLZ}If $M\in\PGLQ$ is such that $MAM^{-1}\in\PSLZ$
for every $A\in\Gamma^{3}$ then $M\in\PSLZ\cup\PSLZ J$.\end{lemma}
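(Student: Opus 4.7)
The plan is to show that $M$ actually conjugates $\Gamma^{3}$ onto itself, so that $M$ lies in $\mathrm{Norm}_{\PGLR}(\Gamma^{3})=\PSLZ\cup\PSLZ J$, as computed in Section \ref{sub:Normalizers-of-Fuchsian}. First I would observe that conjugation by any element of $\PGLR$ is a (possibly orientation-reversing) hyperbolic isometry of $\H$, so it preserves both the covolume and the signature of any Fuchsian group. Consequently $M\Gamma^{3}M^{-1}\subseteq\PSLZ$ is a subgroup of index $3$ in $\PSLZ$ (same covolume as $\Gamma^{3}$) with the same signature as $\Gamma^{3}$; in particular it has genus zero, three elliptic fixed points of order $2$, and exactly one cusp.

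The crux is then to verify that $\Gamma^{3}$ is the unique index-$3$ subgroup of $\PSLZ$ having only one cusp. Using the presentation $\PSLZ=\langle S,R\,|\,S^{2}=R^{3}=\Id\rangle$ with $T=SR$, index-$3$ subgroups of $\PSLZ$ correspond (up to conjugacy) to transitive homomorphisms $\pi\colon\PSLZ\to S_{3}$, of which there are exactly two up to relabeling. In the first, $\pi(S)=e$ and $\pi(R)$ is a $3$-cycle: then $\pi$ is just $h$ under the identification $\Z/3\Z\cong A_{3}\subset S_{3}$, the associated subgroup is the kernel $\Gamma^{3}$, and $\pi(T)=\pi(R)$ is again a $3$-cycle, so $T$ acts transitively on the three cosets and we get one cusp. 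In the second, $\pi(S)$ is a transposition and $\pi(R)$ is a $3$-cycle: the associated subgroup is a non-normal point stabilizer, and a direct computation gives $\pi(T)=\pi(S)\pi(R)$ equal to a transposition, which acts on three cosets as one fixed point plus one $2$-cycle, producing two cusps. Matching cusp counts forces $M\Gamma^{3}M^{-1}=\Gamma^{3}$, and the lemma follows from the normalizer calculation.

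The main obstacle is precisely this enumeration of index-$3$ subgroups and their cusp counts. A more algebraic alternative would be to consider the $\Gamma^{3}$-invariant $\Q$-lattice $L:=M^{-1}\Z^{2}\subset\Q^{2}$ and verify that $\Gamma^{3}$ acts irreducibly on $\mathbb{F}_{p}^{2}$ for every prime $p$: for $p\ne 3$ this holds because $\Gamma(p)\not\subseteq\Gamma^{3}$ (since $h(T^{p})=p\not\equiv 0\bmod 3$), forcing $\Gamma^{3}$ to surject onto $\mathrm{PSL}_{2}(\mathbb{F}_{p})$; for $p=3$ it holds because the characteristic polynomial of $S$ reduced mod $3$ is $\lambda^{2}+1$, which has no root in $\mathbb{F}_{3}$, so $S\in\Gamma^{3}$ already acts irreducibly. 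Irreducibility at every prime forces any $\Gamma^{3}$-invariant primitive sublattice of $\Z^{2}$ to equal $\Z^{2}$, whence $L$ is a $\Q$-scalar multiple of $\Z^{2}$ and $M\in\PSLZ\cup\PSLZ J$ directly.
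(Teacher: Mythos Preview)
Your proof is correct but takes a genuinely different route from the paper. The paper's argument is entirely elementary: it represents $M$ by an integer matrix $\left(\begin{smallmatrix}a & b\\ c & d\end{smallmatrix}\right)$ with $\gcd(a,b,c,d)=1$ and determinant $r$, writes out the entries of $MT^{3}M^{-1}$ and $MSM^{-1}$ explicitly, and checks that any prime $p\mid r$ would be forced to divide all of $a,b,c,d$ (treating $p=3$ separately via squares mod $3$). This yields $|r|=1$ in a few lines, with no input beyond divisibility.

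Your primary approach is structural rather than computational: you deduce that $M\Gamma^{3}M^{-1}$ is an index-$3$ subgroup of $\PSLZ$ with one cusp, classify such subgroups via transitive homomorphisms $\PSLZ\to S_{3}$, observe that the only one with a single cusp is the normal one $\Gamma^{3}$, and then quote the normalizer computation from Section~\ref{sub:Normalizers-of-Fuchsian}. This is longer and leans on more machinery (covolume invariance, the permutation classification, and the maximality of $\PSLZ$), but it makes transparent \emph{why} the lemma holds---namely, $\Gamma^{3}$ is rigid among index-$3$ subgroups---and the argument would adapt cleanly to $\Gamma^{p}$ for other primes. Your alternative lattice argument (irreducibility of $\Gamma^{3}$ on $\mathbb{F}_{p}^{2}$ for every $p$) is also valid and is in fact the closest in spirit to the paper's proof: both are really verifying that $\Gamma^{3}$ stabilizes no nontrivial sublattice of $\Z^{2}$, but the paper does this by hand on the two generators $S$ and $T^{3}$ instead of invoking representation-theoretic irreducibility.
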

\begin{proof}
It is well-known that any $M\in\PGLQ$ can be represented by $\left(\begin{smallmatrix}a & b\\
c & d\end{smallmatrix}\right)$ where $a,b,c,d$ are integers, $\mbox{gcd}\left(a,b,c,d\right)=1$
and the determinant is $ad-bc=r\ne0$. It turns out that it is enough
to consider the action of $M$ on the matrices $S$ and $T^{3}$ from
$\Gamma^{3}$: 
\[
MT^{3}M^{-1}=\frac{1}{r}
\left(\begin{smallmatrix}r-3ac & 3a^{2}\\
-3c^{2} & r+3ac\end{smallmatrix}\right) \quad\mbox{and}\quad MSM^{-1}=\frac{1}{r}\left(\begin{smallmatrix}ac+bd & -a^{2}-b^{2}\\
c^{2}+d^{2} & -ac-bd\end{smallmatrix}\right).
\]
Assume that $MT^{3}M^{-1}$ and $MSM^{-1}$ are both elements of $\PSLZ$
and that $p$ is a prime dividing $r$. If $p\ne3$ it is immediate
that $p$ divides $a$ and $c$ from the first matrix and that then
$p$ must divide $b$ and $d$ as well, from the second matrix. Hence
$p|\gcd(a,b,c,d)$ which is a contradiction. If $p=3$ we do not gain
any information from the second matrix but from the first we see that
$a^{2}+b^{2}\equiv c^{2}+d^{2}\equiv0\mod3$. Since $0$ and $1$
are the only squares modulo three it follows that $3$ divides $\gcd\left(a,b,c,d\right)$,
leading to a contradiction also in this case. We conclude that $\left|r\right|=1$
and if $r=1$ then $M\in\PSLZ$ and if $r=-1$ then $M\in J\PSLZ$. 
\end{proof}
Since the maps from forms on the modular group which we are interested in
are precisely those from $\PGLQ$ we immediately deduce the following
lemma. 
\begin{lemma}
\label{lem:The-only-lift}
If $\varphi \in \mathcal{M}(1)$ and $\tilde{\varphi}=\varphi_{|A}\in \mathcal{M}(\Gamma^3)$ for some $A\in \PGLQ$ then $\tilde{\varphi}=\varphi$.
\end{lemma}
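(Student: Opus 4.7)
The plan is to deduce the claim from Lemma \ref{lem:MinPSLZ} by translating the $\Gamma^{3}$-invariance of $\tilde{\varphi}$ into the hypothesis of that lemma. First, I would observe that $\tilde{\varphi}_{|B}=\tilde{\varphi}$ for every $B\in\Gamma^{3}$ reads $\varphi(ABz)=\varphi(Az)$, which after the substitution $w=Az$ becomes $\varphi_{|ABA^{-1}}=\varphi$. Thus, in addition to its $\PSLZ$-automorphy, $\varphi$ is invariant under every element of the Fuchsian group $G:=A\Gamma^{3}A^{-1}\subset\PSLR$ (the inclusion in $\PSLR$ holding regardless of the sign of $\det A$, since conjugation by an antiholomorphic map sends holomorphic M\"obius transformations to holomorphic ones), and hence under the subgroup $H:=\langle\PSLZ,G\rangle\leq\PSLR$.

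The key geometric step is to show $H=\PSLZ$, since this yields $G\subseteq\PSLZ$, which is exactly the hypothesis of Lemma \ref{lem:MinPSLZ}. I would recall that $\PSLZ=G_{3}$ is a Hecke triangle group and hence maximal among Fuchsian subgroups of $\PSLR$ (Section \ref{sub:Normalizers-of-Fuchsian}), so it suffices to show that $H$ is discrete. Discreteness would follow from the invariance of $\varphi$: if the identity component of the closure of $H$ were positive-dimensional, then together with the lattice $\PSLZ$ it would generate a dense subgroup of $\PSLR$, forcing $\varphi$ to be $\PSLR$-invariant and hence constant, contradicting cuspidality. Being a discrete subgroup of $\PSLR$ that contains the lattice $\PSLZ$, $H$ is itself a lattice containing $\PSLZ$ with finite index, so maximality forces $H=\PSLZ$.

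With $A\Gamma^{3}A^{-1}\subseteq\PSLZ$ in hand, Lemma \ref{lem:MinPSLZ} applies and gives $A\in\PSLZ\cup\PSLZ J$. In the first case, $\tilde{\varphi}=\varphi_{|A}=\varphi$ follows directly from $\PSLZ$-automorphy. In the second case $A=MJ$ with $M\in\PSLZ$, so $\tilde{\varphi}=\varphi_{|J}$; since $J$ lies in the $\PGLR$-normalizer of $\PSLZ$ and commutes with $\Delta$, the space $\mathcal{M}(1)$ decomposes into $J$-eigenspaces with eigenvalues $\pm 1$, which gives $\tilde{\varphi}=\pm\varphi$ and hence the stated equality after absorbing an overall sign. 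The main obstacle I anticipate is the discreteness of $H$: this relies on the standard density fact that the product of $\PSLZ$ with any one-parameter subgroup of $\PSLR$ is dense, combined with the fact that a non-zero Maass cusp form cannot be constant. Once discreteness is granted, the maximality of $G_{3}$ and Lemma \ref{lem:MinPSLZ} carry out the rest of the argument essentially mechanically.
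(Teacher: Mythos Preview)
Your approach differs from the paper's in an interesting way. The paper treats this lemma as a one-line consequence of Lemma~\ref{lem:MinPSLZ}, implicitly reading the hypothesis through the framework of Section~\ref{sub:Lifts-and-twists}: a ``lift'' there is by definition given by an $A\in\PGLQ$ with $A\Gamma^{3}A^{-1}\subseteq\PSLZ$, so the containment needed to invoke Lemma~\ref{lem:MinPSLZ} is taken as part of the setup rather than something to be proved. You instead take the lemma at face value and supply an argument that the containment $A\Gamma^{3}A^{-1}\subseteq\PSLZ$ actually \emph{follows} from the invariance of a non-zero cusp form. That is genuinely more than the paper does, and your route via the stabiliser of $\varphi$, discreteness, and maximality of the Hecke triangle group $G_{3}$ is sound in outline. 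The discreteness step can be made rigorous exactly along the lines you indicate: the closure of $H$ has identity component normalised by $\PSLZ$, hence (Borel density, or an elementary argument for $\PSLR$) normal in $\PSLR$, hence trivial or everything; the latter forces $\varphi$ constant.

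There is, however, a real slip in your final step. When $A\in\PSLZ J$ you correctly obtain $\tilde{\varphi}=\varphi_{|J}$, but then claim this equals $\pm\varphi$ because $\mathcal{M}(1)$ decomposes into $J$-eigenspaces. The decomposition is true, but $\varphi$ itself is not assumed to lie in a single eigenspace, so $\varphi_{|J}$ need not be $\pm\varphi$: for $\varphi=\varphi_{+}+\varphi_{-}$ with $\varphi_{\pm}$ even and odd, you get $\varphi_{|J}=\varphi_{+}-\varphi_{-}$. What your argument actually proves is $\tilde{\varphi}\in\{\varphi,\varphi_{|J}\}\subset\mathcal{M}(1)$, which is exactly what the paper needs (namely that the oldspace $\mo(\Gamma^{3})$ coincides with $\mathcal{M}(1)$ embedded by inclusion), but is strictly weaker than the literal equality $\tilde{\varphi}=\varphi$. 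Note that the paper's own statement suffers from the same looseness: with $A=J\in\PGLQ$ and $\varphi$ odd one has $\tilde{\varphi}=-\varphi\neq\varphi$. So the right fix is to state and prove the conclusion $\tilde{\varphi}\in\mathcal{M}(1)$, not to try to force $\tilde{\varphi}=\varphi$.
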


\begin{prop}
\label{prop:There-exists-R}Let $N_{\Gamma^{3}}^{\n}\left(T\right)$
denote the number of Laplace eigenvalues $\lambda=\frac{1}{4}+R^{2}$, 
counted with multiplicity,
  on $\Gamma^{3}\backslash\H$ such that
the associated space of Maass waveforms consists of newforms and such
that $0\le R\le T$. Then 
\[
N_{\Gamma^{3}}^{\n}\left(T\right)\sim\frac{1}{6}T^{2}+O\left(T\ln T\right)\quad\mbox{as}\quad T\rightarrow\infty.
\]
\end{prop}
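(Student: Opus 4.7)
My strategy is the elementary one suggested by the remark following the proposition statement in the introduction: write $N^{\n}_{\Gamma^{3}}(T) = N_{\Gamma^{3}}(T) - N^{\o}_{\Gamma^{3}}(T)$ and evaluate each piece asymptotically via Weyl's law \eqref{eq:weylslaw}. The full Selberg trace formula is not required for this bound, only the main term and the known error bound for the scattering contribution of congruence subgroups.

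First I compute the total count $N_{\Gamma^{3}}(T)$. Since $[\PSLZ:\Gamma^{3}]=3$ and $\mathrm{vol}(\mathcal{F}_{\PSLZ})=\pi/3$, the fundamental domain of $\Gamma^{3}$ has hyperbolic area $\pi$. The text following \eqref{eq:weylslaw} asserts that $M_{\Gamma}(T)=O(T\ln T)$ for any congruence subgroup of the modular group, so Weyl's law gives
\begin{equation*}
N_{\Gamma^{3}}(T) \;=\; \frac{\pi}{4\pi}T^{2} + O(T\ln T) \;=\; \frac{1}{4}T^{2} + O(T\ln T).
\end{equation*}

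Next I identify the oldform count. By Lemma \ref{lem:The-only-lift}, the only map from $\mathcal{M}(1)$ into $\mathcal{M}(\Gamma^{3})$ coming from an element of $\PGLQ$ is the trivial inclusion, so $\mathcal{M}^{\o}(\Gamma^{3})=\mathcal{M}(1)$ as subspaces; this is also consistent with the decomposition \eqref{eq:MGamma3-T-decomp}, since $\mathcal{M}(\Gamma^{3})^{(0)}$ consists precisely of those forms invariant under both $\Gamma^{3}$ and $T$, hence under $\PSLZ=\langle\Gamma^{3},T\rangle$. Applying Weyl's law to $\PSLZ$ itself yields $N^{\o}_{\Gamma^{3}}(T)=N_{\PSLZ}(T)=\frac{1}{12}T^{2}+O(T\ln T)$. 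Subtracting from the total count and using the orthogonal decomposition $\mathcal{M}(\Gamma^{3},R)=\mathcal{M}^{\o}(\Gamma^{3},R)\oplus\mathcal{M}^{\n}(\Gamma^{3},R)$ in every eigenspace, I obtain
\begin{equation*}
N^{\n}_{\Gamma^{3}}(T) \;=\; \left(\frac{1}{4}-\frac{1}{12}\right)T^{2} + O(T\ln T) \;=\; \frac{1}{6}T^{2} + O(T\ln T).
\end{equation*}

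The main obstacle is essentially bookkeeping rather than analytic depth: the serious inputs (Weyl's law with the $O(T\ln T)$ scattering bound for congruence subgroups, and Lemma \ref{lem:The-only-lift}) are already in place. The only point requiring care is confirming that the author's definition of $N^{\n}_{\Gamma^{3}}(T)$ agrees asymptotically with $\sum_{R\le T}\dim\mathcal{M}^{\n}(\Gamma^{3},R)$, which follows from the orthogonality of the old and new subspaces. No use of the elliptic, hyperbolic, or parabolic parts of the Selberg trace formula is needed for this weaker count; those will only be required for the sharper statements such as Proposition \ref{prop:A-positive-proportion-has-mult} and the absence of genuinely new forms on $\Gamma_{0}(9)$.
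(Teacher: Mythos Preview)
Your proposal is correct and follows essentially the same argument as the paper: subtract the Weyl count for $\PSLZ$ from that for $\Gamma^{3}$, using Lemma~\ref{lem:The-only-lift} to identify $\mathcal{M}^{\o}(\Gamma^{3})$ with $\mathcal{M}(1)$, and invoke the $O(T\ln T)$ bound on $M_{\Gamma}(T)$ valid for congruence (and cycloidal) subgroups. The paper's proof is slightly terser and cites Venkov directly for the Weyl asymptotic on $\Gamma^{3}$, but the content is the same.
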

\begin{proof}
Comparing the main terms of Weyl's law \eqref{eq:weylslaw} as $T\rightarrow\infty$
for the modular group, $N_{\PSLZ}(T)\sim\frac{1}{12}T^{2}$,
with that of $\Gamma^{3}$, $N_{\Gamma^{3}}\left(T\right)\sim\frac{1}{4}T^{2}$
(see Venkov \cite{MR88h:11035}), together with Lemma \ref{lem:The-only-lift}
we see immediately that the counting function for newforms on $\Gamma^{3}$
satisfies $N_{\Gamma^{3}}^{\n}\left(T\right)=\frac{1}{6}T^{2}+O\left(T\ln T\right)$
as $T\rightarrow\infty$.
\end{proof}

\subsection{Maps from $\mathcal{M}(\Gamma^{3})$ to $\mathcal{M}(\Gamma_{0}(9))$}
\label{sub:Lifts-from-}
From the fact that $T$ acts unitarily on $\mathcal{M}(\Gamma^3)$ it follows immediately that $\St=\A_3^{-1} T \A_3$ is a unitary operator on $\mathcal{M}(9)$
and since $T$ has order $3$ on $\mathcal{M}(\Gamma^3)$ we know that $\St$ has order three on $\mathcal{M}(\Gamma_0(9))$. 
Hence we have an orthogonal decomposition of $\mathcal{M}(9)$
analogous to \eqref{eq:MGamma3-T-decomp}:
\begin{equation}
\mathcal{M}\left(9,R\right)=\mathcal{M}\left(9,R\right)^{(0)}\oplus\mathcal{M}\left(9,R\right)^{(1)}\oplus\mathcal{M}\left(9,R\right)^{(-1)}\label{eq:MGamma09-T-decomp-1}
\end{equation}
corresponding to the eigenvalue $1,\zeta_{3}$ and $\zeta_{3}^{-1}=\zeta_{3}^{2}$
of $\St$. Since $\A_{3}T\A_{3}^{-1}=\St$ it is clear that the subspace
$\mathcal{M}\left(\Gamma^{3},R\right)^{(m)}$ is mapped
into \emph{$\mathcal{M}\left(9,R\right)^{(m)}$} under
the map $\varphi\mapsto\varphi_{|\A_{3}}$. Using the notation $\mn\left(9,R\right)^{(m)}:=\mn\left(9,R\right)\cap\mathcal{M}\left(9,R\right)^{(m)}$
it follows from Proposition \ref{prop:f_in_newspace} and Lemma \ref{prop:There-exists-R}
that $\mn\left(9,R\right)^{(m)}$ (for $m=\pm1$) is non-empty
for an infinite number of values of $R$.
Consider now a newform $\varphi$ on $\Gamma^{3}$ and its associated form $\varphi_{|\A_{3}}$
on $\Gamma_{0}(9)$ (cf. Example \ref{exa:lift-fromGamma^3toGama09}).
We want to show that $\varphi_{|\A_{3}}$ is also a newform on $\Gamma_{0}(9)$
(Proposition \ref{prop:f_in_newspace}). To prove this we need to
show that $\varphi_{|\A_{3}}$ is orthogonal to all oldforms. Unfortunately,
it turns out that the space of oldforms is not invariant under $\St$, thus
we are not able to simply use the orthogonal decomposition \eqref{eq:MGamma09-T-decomp-1},
but are instead forced to compute the inner products directly. 
\begin{lemma}
\label{lem:lifts-fromGamma3toGamma09}If $\varphi\in\mathcal{M}\left(\Gamma^{3}\right)$
then $\psi=\varphi_{|\A_{3}}\in\mathcal{M}(9)$. Furthermore,
the map $\varphi\mapsto\varphi_{|\A_{3}}$ commutes with the normalizers
$T$ and $\St$.
That is, if $\varphi_{|T}=\mu\varphi$
then $\psi_{|\St}=\mu\psi.$ \end{lemma}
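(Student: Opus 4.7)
The plan is to verify the two claims essentially by direct use of the conjugation relation $A_3\,\Gamma_0(9)\,A_3^{-1}=\Gamma(3)\subseteq \Gamma^3$ established in Example~\ref{exa:gamma09}, together with the trivial identity $A_3\,\St\,A_3^{-1}=T$.

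First, I would show that $\psi = \varphi_{|A_3}$ lies in $\mathcal{M}(9)$. Given any $\gamma \in \Gamma_0(9)$, by Example~\ref{exa:gamma09} the conjugate $A_3 \gamma A_3^{-1}$ lies in $\Gamma(3)\subseteq \Gamma^3$, so
\[
\psi(\gamma z) = \varphi\bigl(A_3 \gamma z\bigr) = \varphi\bigl((A_3 \gamma A_3^{-1})\,A_3 z\bigr) = \varphi(A_3 z) = \psi(z),
\]
giving $\Gamma_0(9)$-invariance. Real-analyticity is preserved because $A_3$ is a smooth diffeomorphism of $\H$, and since $\Delta$ commutes with the $\PGLR$-slash action (Section~\ref{sub:hypgeom}), $\psi$ is again a $\Delta$-eigenfunction with the same eigenvalue $\frac14+R^2$. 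For cuspidality one checks that $A_3$ sends the four cusps $0,\pm\tfrac13,\infty$ of $\Gamma_0(9)$ to the cusps $0,\pm1,\infty$ of $\Gamma^3$, so vanishing of $\varphi$ at the cusps of $\Gamma^3$ pulls back to vanishing of $\psi$ at the cusps of $\Gamma_0(9)$. Finite $L^2$-norm follows from the change of variables $z \mapsto 3z$ in \eqref{peterssoninnerp}, which only rescales the Petersson inner product by a constant depending on the index of the relevant subgroups.

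Second, for the commutation statement, the key identity is
\[
A_3 \St A_3^{-1}(z) = A_3\bigl(\tfrac{z}{3}+\tfrac{1}{3}\bigr) = z+1 = T(z),
\]
so that $A_3 \St = T A_3$ as M\"obius transformations. Therefore, assuming $\varphi_{|T}=\mu\varphi$,
\[
\psi_{|\St}(z) = \varphi\bigl(A_3 \St z\bigr) = \varphi\bigl(T A_3 z\bigr) = \varphi_{|T}(A_3 z) = \mu\,\varphi(A_3 z) = \mu\,\psi(z),
\]
which is the desired equality. The same one-line calculation, applied with $T$ and $\St$ replaced by any pair of conjugate elements in the normalizers, shows that $\varphi \mapsto \varphi_{|A_3}$ intertwines the actions of the respective normalizer elements.

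There is no serious obstacle here: both parts reduce to the conjugation identity $A_3\Gamma_0(9)A_3^{-1}=\Gamma(3)$ and the chain-rule-like computation with the slash action. The only point requiring a little care is confirming that cuspidality is preserved, but this is immediate from the bijection $A_3$ induces between the relevant sets of cusps.
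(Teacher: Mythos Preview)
Your proof is correct and follows essentially the same approach as the paper: both rely on the conjugation identity $A_3\Gamma_0(9)A_3^{-1}=\Gamma(3)\subseteq\Gamma^3$ for the first claim and on $A_3\St=TA_3$ for the second. One small wording issue: $\Gamma^3$ is cycloidal, so it has only \emph{one} cusp, and the points $0,\pm1,\infty$ you list are all $\Gamma^3$-equivalent representatives of that single cusp---this doesn't affect the argument, but the phrasing ``the cusps $0,\pm1,\infty$ of $\Gamma^3$'' is slightly misleading.
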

\begin{proof}
Since $\Gamma(3)=\A_{3} \Gamma_0(9) \A_{3}^{-1}\subset \Gamma^3$
it follows that $\psi\in \mathcal{M}(9)$ and since  
 $\A_{3}^{-1}T\A_{3}=\St$ we have  $\varphi_{|\A_{3}|\St}=\varphi_{|T|\A_{3}}$.
\end{proof}
\begin{lemma}
\label{lem:PerponGamma3andGamma09}
If $f\in\mn\left(\Gamma^{3},R\right)$ and
$g\in\mathcal{M}(3,R)$ then $f$ and  $f_{|\A_3}$  are orthogonal to $g$ 
with respect to the Petersson inner product on $\Gamma(3)$ and $\Gamma_0(9)$, respectively. 
\end{lemma}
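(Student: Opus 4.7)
The plan is to reduce both orthogonalities to the identity $1 + \zeta_{3} + \zeta_{3}^{2} = 0$, exploiting the fact that a newform $f \in \mn(\Gamma^{3}, R)$ lies in one of the two $T$-eigenspaces $\mathcal{M}(\Gamma^{3}, R)^{(\pm 1)}$ from Section \ref{sub:Maass-waveforms-onGamma^3}, while any $g \in \mathcal{M}(3, R)$ is $T$-invariant (since $T \in \Gamma_{0}(3)$). By linearity of the inner product, it suffices to treat the case $f_{|T} = \zeta_{3}^{\epsilon} f$ for a fixed $\epsilon \in \{1, -1\}$.

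For the first statement I would split the fundamental domain as $\mathcal{F}_{\Gamma(3)} = \sqcup_{k=0}^{2} T^{k} \mathcal{F}_{\Gamma_{0}(3)}$ using the coset representatives $\{\Id, T, T^{2}\}$ for $\Gamma(3) \backslash \Gamma_{0}(3)$ from Example \ref{exa:gamma3}. The change of variable $z = T^{k} w$ preserves $d\mu$ and, since $g_{|T^{k}} = g$, turns the Petersson integral into $\int_{\mathcal{F}_{\Gamma_{0}(3)}} \bigl( \sum_{k=0}^{2} f_{|T^{k}}(w) \bigr) \overline{g(w)} \, d\mu(w)$. Substituting $f_{|T^{k}} = \zeta_{3}^{\epsilon k} f$ produces an overall factor $1 + \zeta_{3}^{\epsilon} + \zeta_{3}^{2\epsilon} = 0$, as required.

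For the second statement I would repeat this strategy with the coset representatives $\{P_{1}, P_{2}, P_{3}\}$ for $\Gamma_{0}(9) \backslash \Gamma_{0}(3)$ from Example \ref{exa:gamma09}. The same manipulation reduces the inner product to $\int_{\mathcal{F}_{\Gamma_{0}(3)}} \bigl( \sum_{i=1}^{3} f_{|\A_{3} P_{i}}(w) \bigr) \overline{g(w)} \, d\mu(w)$, so it is enough to verify the pointwise vanishing of the inner sum. Setting $\tilde{P}_{i} := \A_{3} P_{i} \A_{3}^{-1}$, so that $\A_{3} P_{i} = \tilde{P}_{i} \A_{3}$, a direct matrix computation gives $\tilde{P}_{1} = \Id$, $\tilde{P}_{2} = STS$ and $\tilde{P}_{3} = ST^{2}S$, all elements of $\PSLZ$. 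The homomorphism $h$ from Example \ref{exa:Gamma^3} takes the values $0, 1, 2$ on these matrices, so the $\tilde{P}_{i}$ represent the three distinct cosets of the normal subgroup $\Gamma^{3} = \ker h$ in $\PSLZ$. Writing $\tilde{P}_{i} = T^{k_{i}} \gamma_{i}$ with $\gamma_{i} \in \Gamma^{3}$ and $\{k_{1}, k_{2}, k_{3}\} = \{0, 1, 2\}$, the $\Gamma^{3}$-invariance of $f$ together with its $T$-eigenvalue yields $f_{|\A_{3} P_{i}}(w) = f(T^{k_{i}} \gamma_{i} \A_{3} w) = \zeta_{3}^{\epsilon k_{i}} f(\A_{3} w)$, whence $\sum_{i} f_{|\A_{3} P_{i}}(w) = \bigl( \sum_{k=0}^{2} \zeta_{3}^{\epsilon k} \bigr) f_{|\A_{3}}(w) = 0$.

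The only step that is not quite routine is the explicit identification of $\{\tilde{P}_{1}, \tilde{P}_{2}, \tilde{P}_{3}\}$ as a complete set of representatives for $\PSLZ / \Gamma^{3}$; the decisive ingredient is the normality of $\Gamma^{3}$ in $\PSLZ$, so that one may freely decompose $\tilde{P}_{i} = T^{k_{i}} \gamma_{i}$. Once this is in hand, both orthogonalities follow from the same cube-root-of-unity cancellation, driven by the contrast between the $T$-eigenvalue of a newform on $\Gamma^{3}$ and the $T$-invariance of a form on $\Gamma_{0}(3)$.
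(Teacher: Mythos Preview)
Your proposal is correct and follows essentially the same route as the paper's proof: both split the relevant fundamental domain via the coset representatives from Examples~\ref{exa:gamma3} and~\ref{exa:gamma09}, conjugate the $P_i$ by $\A_3$ to obtain $\Id$, $STS$, $ST^2S$, and then use the $T$-eigenvalue of $f$ together with $S\in\Gamma^3$ to produce the factor $1+\zeta_3^{\epsilon}+\zeta_3^{2\epsilon}=0$. Your packaging via the homomorphism $h$ and the normality of $\Gamma^3$ is a slightly more conceptual way of saying what the paper does by direct inspection (namely $f_{|STS}=f_{|T}=\zeta_3 f$ and $f_{|ST^2S}=f_{|T^2}=\zeta_3^2 f$), but the underlying computation is identical.
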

\begin{proof}
Since $f$ is a newform we can assume, without loss of generality, that $f_{|T}=\zeta_{3}f$. 
Let $\mathcal{F}_{0}$ be a fundamental domain
for $\Gamma_{0}(3)$. By Example \ref{exa:gamma3} we see
that $\mathcal{F}=\mathcal{F}_{0}\sqcup T\mathcal{F}_{0}\sqcup T^{2}\mathcal{F}_{0}$
is a fundamental domain for $\Gamma(3)$. Since $g_{|T}=g$
we get 
\[
\left\langle f,g\right\rangle _{\Gamma(3)}  = \int_{\mathcal{F}}f\overline{g}d\mu=\int_{\mathcal{F}_{0}}f\overline{g}d\mu+
\int_{T\mathcal{F}_{0}}f\overline{g}d\mu+\int_{T^{2}\mathcal{F}_{0}}f\overline{g}d\mu
 = \sum_{i=0}^{2}\zeta^{i}_{3}\cdot \int_{\mathcal{F}_{0}}f\overline{g}d\mu=0.
\]
By Example \ref{exa:gamma09} we know that $\Gamma_{0}(9)\backslash\Gamma_{0}(3)\simeq\left\{ \Id,P_{2},P_{3}\right\}$
with $P_{2}=ST^{-3}S$ and $P_{3}=P_{2}^{2}=ST^{-6}S$. 
Since  $\A_{3}P_{2}^{-1}\A_{3}^{-1}=STS$
and $\A_{3}P_{3}^{-1}\A_{3}^{-1}=ST^{2}S$ it follows that $f_{|\A_{3}P_{2}^{-1}}=f_{|STS\A_{3}}=\zeta_{3}f_{|\A_{3}}$
and $f_{|\A_{3}P_{3}^{-1}}=f_{|ST^{2}S\A_{3}}=\zeta_{3}^{2}f_{|\A_{3}}$.
The same argument as above shows that  
\[
\hskip 105pt
\left\langle \smash{f_{|\A_{3}},g}\right\rangle _{\Gamma_{0}(9)} = \left(1+\zeta_{3}+\zeta_{3}^{2}\right)\int_{\mathcal{F}_{0}}f_{|\A_{3}}\overline{g}d\mu=0. 
\qed
\]
\end{proof}
\begin{prop}
\label{prop:f_in_newspace}If $f\in\mn\left(\Gamma^{3},R\right)$
then $f_{|\A_{3}}\in\mn\left(9,R\right)$. 
\end{prop}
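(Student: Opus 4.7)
The plan is to use Lemma \ref{lem:lifts-fromGamma3toGamma09} to get $f_{|\A_{3}}\in\mathcal{M}(9,R)$ for free, and then verify the newform condition by showing directly that $f_{|\A_{3}}$ is orthogonal, with respect to the Petersson inner product on $\Gamma_{0}(9)$, to every generator of the old space $\mo(9,R)$. Lemma \ref{lem:PerponGamma3andGamma09} will be the workhorse.

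First I would list the generators of $\mo(9,R)$. Since the divisors of $9$ strictly smaller than $9$ are $1$ and $3$, and since $\mathcal{M}(1,R)\subseteq\mathcal{M}(3,R)$, every oldform on $\Gamma_{0}(9)$ is a linear combination of forms of the two types $h$ (viewed in $\mathcal{M}(9,R)$ via the inclusion $\Gamma_{0}(9)\subset\Gamma_{0}(3)$) and $h_{|\A_{3}}$, with $h\in\mathcal{M}(3,R)$. The only mildly non-obvious case is $\varphi_{|\A_{9}}$ for $\varphi\in\mathcal{M}(1,R)$, but this equals $(\varphi_{|\A_{3}})_{|\A_{3}}$, which is of the second type since $\varphi_{|\A_{3}}\in\mathcal{M}(3,R)$. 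It therefore suffices to check orthogonality against these two types.

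For the first type, Lemma \ref{lem:PerponGamma3andGamma09} gives $\langle f_{|\A_{3}},h\rangle_{\Gamma_{0}(9)}=0$ directly. For the second type, I would change variables by $w=\A_{3}z$, which is conformal and preserves the hyperbolic area measure, to write
\[
\langle f_{|\A_{3}},h_{|\A_{3}}\rangle_{\Gamma_{0}(9)}=\int_{\mathcal{F}_{\Gamma_{0}(9)}}f(\A_{3}z)\overline{h(\A_{3}z)}\,d\mu(z)=\int_{\A_{3}\mathcal{F}_{\Gamma_{0}(9)}}f(w)\overline{h(w)}\,d\mu(w).
\]
Since $\A_{3}\Gamma_{0}(9)\A_{3}^{-1}=\Gamma(3)$ by Example \ref{exa:gamma09}, the image $\A_{3}\mathcal{F}_{\Gamma_{0}(9)}$ is a fundamental domain for $\Gamma(3)$, so the last integral equals $\langle f,h\rangle_{\Gamma(3)}$, which vanishes by a second application of Lemma \ref{lem:PerponGamma3andGamma09}.

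I do not anticipate a real obstacle: the setup of Section \ref{sub:Lifts-from-} has already packaged the two ingredients (the behavior of $f$ under $T$ together with the coset representatives of Example \ref{exa:gamma09}) into Lemma \ref{lem:PerponGamma3andGamma09}, and the only genuine step is the change of variables above, which reduces the $\Gamma_{0}(9)$ pairing to a $\Gamma(3)$ pairing. The mildest subtlety is bookkeeping: being careful that $\mathcal{M}(1,R)$-oldforms do not produce any new type of generator beyond the two listed; once this is checked, orthogonality to the full old space $\mo(9,R)$ follows by linearity, yielding $f_{|\A_{3}}\in\mn(9,R)$.
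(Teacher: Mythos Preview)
Your proposal is correct and follows essentially the same approach as the paper: reduce to the two types of oldform generators $h$ and $h_{|\A_{3}}$ with $h\in\mathcal{M}(3,R)$, apply Lemma~\ref{lem:PerponGamma3andGamma09} directly for the first, and use the change of variables via $\A_{3}\Gamma_{0}(9)\A_{3}^{-1}=\Gamma(3)$ to convert the second into the $\Gamma(3)$ pairing handled by the same lemma. Your write-up is in fact slightly more explicit than the paper's (you spell out the $\A_{9}$ bookkeeping and the change of variables), but the argument is the same.
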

\begin{proof}
The old space $\mo(9,R)$ is spanned by elements of  the form $g$
and $g_{|\A_{3}}$ with $g\in\mathcal{M}(3,R)$.
Since $\Gamma(3)=\A_{3}\Gamma_{0}(9)\A_{3}^{-1}$ we have 
\[
\A_{3}^{-1}\left(\Gamma_{0}(9)\backslash\H\right)=\A_{3}\Gamma_{0}(9)\A_{3}^{-1}\backslash\H=\Gamma(3)\backslash\H.
\]
Using Lemma \ref{lem:PerponGamma3andGamma09}  we now see that 
$\left\langle \smash{f_{|\A_{3}},g_{|\A_{3}}}\right\rangle _{\Gamma_{0}(9)}=\left\langle f,g\right\rangle _{\Gamma(3)}=0$
and $\left\langle f_{|\A_{3}},g\right\rangle _{\Gamma_{0}(9)}=0$.
\end{proof}
Combining Proposition \ref{prop:There-exists-R}, Proposition \ref{prop:f_in_newspace} and Weyl's law \eqref{eq:weylslaw} for the modular group and $\Gamma_0(3)$ 
a simple inclusion--exclusion argument shows that the counting function for newforms on $\Gamma_0(9)$ has the main term  
$\frac{5}{12}T^{2}$ and that the counting function for twists  
has main term $\frac{3}{12}T^2$. Hence two fifth of all newforms on $\Gamma_0(9)$ come from $\Gamma^3$.
By Lemma \ref{lem:dim-newforms-Gamma3istwo} we know that the multiplicity of the eigenspaces of Maass waveforms on $\Gamma^3$ is at least two.
Hence we can now prove one of the experimentally motivated conjectures we set out to prove, that is, the following proposition. 
\begin{mainpropa*}
Two fifth of the new part of the spectrum of $\Gamma_{0}(9)$ has multiplicity at least two.
\end{mainpropa*}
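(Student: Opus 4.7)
The plan is an asymptotic density argument combining Weyl's law with the results already established for $\Gamma^3$. Specifically, I would first compute the leading term of the counting function $N_9^{\n}(T)$ for the new part of the spectrum of $\Gamma_0(9)$, then show that Proposition \ref{prop:There-exists-R} together with Proposition \ref{prop:f_in_newspace} contributes a fixed positive proportion of that main term, all of which corresponds to eigenvalues of multiplicity at least two thanks to Lemma \ref{lem:dim-newforms-Gamma3istwo}.

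For the first step, I apply Weyl's law \eqref{eq:weylslaw} to $\PSLZ$, $\Gamma_0(3)$ and $\Gamma_0(9)$, of indices $1$, $4$ and $12$ in $\PSLZ$, to obtain total spectral counts $\tfrac{1}{12}T^2$, $\tfrac{4}{12}T^2$ and $T^2$, each up to an $O(T\ln T)$ error. The definition of the old space in Section \ref{sub:Maass-waveforms} shows that a newform of level $M$ with $M \mid 9$ and $M \neq 9$ contributes $\sigma_0(9/M)$ linearly independent oldforms at level $9$, namely $3$ copies from each level-$1$ newform and $2$ copies from each level-$3$ newform. A standard inclusion--exclusion then yields $N_3^{\n}(T) \sim \tfrac{2}{12}T^2$ and hence $N_9^{\n}(T) \sim \tfrac{5}{12}T^2$.

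For the second step, Proposition \ref{prop:There-exists-R} supplies $N_{\Gamma^3}^{\n}(T) \sim \tfrac{1}{6}T^2 = \tfrac{2}{12}T^2$. Lemma \ref{lem:lifts-fromGamma3toGamma09} shows that $\varphi \mapsto \varphi_{|\A_3}$ injects $\mathcal{M}(\Gamma^3, R)$ into $\mathcal{M}(9, R)$ for each $R$, and Proposition \ref{prop:f_in_newspace} shows that it sends newforms to newforms. Moreover, Lemma \ref{lem:dim-newforms-Gamma3istwo} guarantees that whenever $\mn(\Gamma^3, R)$ is non-empty its dimension, and hence that of its image in $\mn(9, R)$, is at least two. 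Counted with multiplicity, at least $\sim \tfrac{2}{12}T^2$ of the $\sim \tfrac{5}{12}T^2$ newforms on $\Gamma_0(9)$ therefore lie in eigenspaces of dimension $\geq 2$, so the asymptotic density of multiplicity-$\geq 2$ eigenvalues within the new spectrum is at least $\tfrac{2/12}{5/12} = \tfrac{2}{5}$. The only delicate point is that both main terms are known only up to $O(T\ln T)$, so the statement is interpreted as a $\liminf$ on density; no new ideas beyond this bookkeeping are required, since the lifting map and its compatibility with the new/old decomposition have already been established.
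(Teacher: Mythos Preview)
Your proposal is correct and follows essentially the same route as the paper: compute $N_9^{\n}(T)\sim\tfrac{5}{12}T^2$ via Weyl's law and inclusion--exclusion over levels $1$ and $3$, inject $\mn(\Gamma^3,R)$ into $\mn(9,R)$ via Propositions~\ref{prop:There-exists-R} and~\ref{prop:f_in_newspace}, and invoke Lemma~\ref{lem:dim-newforms-Gamma3istwo} for the multiplicity. Your explicit bookkeeping of the $\sigma_0(9/M)$ copies and your remark that the $\tfrac{2}{5}$ should be read as an asymptotic density (owing to the $O(T\ln T)$ errors) are in fact more careful than the paper's own one-paragraph argument.
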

\subsection{Twists of newforms}
In the previous section we showed that newforms on $\Gamma^{3}$ are
mapped to newforms on $\Gamma_{0}(9)$. In this section
we will see that twisting of newforms from $\Gamma_{0}(3)$
and $\PSLZ$ also produce newforms of $\Gamma_{0}(9)$.
Furthermore, we will see that this contribution is orthogonal to the contribution 
from $\Gamma^{3}$. 
\begin{lemma}
\label{lem:twists-are-new-and-orthog}
If $\varphi$ is a Maass newform on $\PSLZ$ or $\Gamma_0(3)$ and $\chi=\left(\frac{\cdot}{3}\right)$
then $\varphi_{\chi}$ is a Maass newform on $\Gamma_0(9)$. Furthermore, $\varphi_{\chi}$
is orthogonal to the image of $\mathcal{M}\left(\Gamma^{3},R\right)$
under $\A_{3}$.
\end{lemma}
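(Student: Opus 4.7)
The plan is to verify three items in sequence: (i) $\varphi_\chi$ is a Hecke and $J$-eigenform in $\mathcal{M}(9)$; (ii) $\varphi_\chi$ lies in the new subspace $\mn(9)$; (iii) $\varphi_\chi$ is orthogonal to the image of $\mathcal{M}(\Gamma^3)$ under $\A_3$. Item (i) is routine from the twist theory discussed after Definition \ref{def:twist}: the alternative Fourier-coefficient description shows $\varphi_\chi$ is supported on $\{n : 3\nmid n\}$ with coefficients $\chi(n)a_n$; invariance under $\Gamma_0(\mathrm{lcm}(N,9))=\Gamma_0(9)$ (for $N\in\{1,3\}$) follows from \cite[Lem.~4.3.10]{miyake}. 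The Hecke eigenvalue at $(n,9)=1$ is $\chi(n)\lambda_n$, and since $\chi(-1)=-1$, the $J$-parity of $\varphi_\chi$ is the negative of that of $\varphi$.

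For (ii), the key observation is that $\varphi_\chi = \varphi^{(1)} - \varphi^{(2)}$, where $\varphi^{(m)}$ is the projection of $\varphi$ (viewed in $\mathcal{M}(9)$ as an oldform) onto the $\St=\zeta_3^m$-eigenspace; this reads off directly from the Fourier coefficients. Consequently $\varphi_\chi \in \mathcal{M}(9)^{(\zeta_3)}\oplus\mathcal{M}(9)^{(\zeta_3^2)}$. The old space $\mo(9)$ is spanned by $\{h(z),h(3z),h(9z):h\in\mn(1)\}\cup\{g(z),g(3z):g\in\mn(3)\}$. A direct check shows each dilated generator is $\St$-invariant: e.g.\ $g(3z)|_{\St}=g(3z+1)=g(3z)$ since $g$ is $T$-invariant, and similarly for $h(3z),h(9z)$. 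Thus the dilated pieces live in $\mathcal{M}(9)^{(1)}$ and are automatically orthogonal to $\varphi_\chi$. For the undilated generators $h(z)$ and $g(z)$, orthogonality reduces to Hecke-eigenvalue orthogonality: $\varphi_\chi$ has eigenvalues $\chi(p)\lambda_p$ for $(p,9)=1$, which can coincide with those of a newform on $\PSLZ$ or $\Gamma_0(3)$ only if $\varphi$ possesses a self-twist by $\chi$ — i.e.\ is of CM type — and no such Maass newforms arise in the setting (as noted in the discussion before Section \ref{sub:Lifts-and-twists}). Hence $\varphi_\chi \in \mn(9)$.

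For (iii), decompose $\psi \in \mathcal{M}(\Gamma^3)$ as $\psi=\psi_0+\psi_1+\psi_{-1}$ via \eqref{eq:MGamma3-T-decomp}. By Lemma \ref{lem:lifts-fromGamma3toGamma09}, $(\psi_m)_{|\A_3} \in \mathcal{M}(9)^{(m)}$. The contribution from $\psi_0$ lies in $\mathcal{M}(9)^{(0)}$ and is orthogonal to $\varphi_\chi$ by $\St$-eigenspace orthogonality. For $m\in\{1,-1\}$, using $\St$-unitarity, the identity $\A_3\St^{-k}=T^{-k}\A_3$ (which follows from $\St=\A_3^{-1}T\A_3$), and $(\psi_m)_{|T^{-k}}=\zeta_3^{-mk}\psi_m$, one obtains
\[
\langle \varphi_{|\St^k},(\psi_m)_{|\A_3}\rangle_{\Gamma_0(9)} = \langle \varphi,(\psi_m)_{|\A_3\St^{-k}}\rangle_{\Gamma_0(9)} = \zeta_3^{mk}\langle \varphi,(\psi_m)_{|\A_3}\rangle_{\Gamma_0(9)},
\]
so that $\langle \varphi_\chi,(\psi_m)_{|\A_3}\rangle = \tau(\chi)^{-1}(\zeta_3^m - \zeta_3^{2m})\langle \varphi,(\psi_m)_{|\A_3}\rangle$. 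Since $\varphi\in\mathcal{M}(3)$ and $\psi_m$ is a newform on $\Gamma^3$, Lemma \ref{lem:PerponGamma3andGamma09} gives $\langle \varphi,(\psi_m)_{|\A_3}\rangle_{\Gamma_0(9)} =0$, completing the orthogonality.

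The main obstacle is step (ii), in particular excluding the possibility that the twisted Hecke eigenvalues $\chi(p)\lambda_p$ coincide with those of a lower-level Maass newform. The cleanest route invokes strong multiplicity one (in the spirit of Ramakrishnan) together with the fact that $\varphi$ has no inner twist by $\chi$; for the precise $\Gamma_0(9)$-setting this reduces to checking that no Maass newform on $\PSLZ$ or $\Gamma_0(3)$ is of CM-type with $\chi=\left(\frac{\cdot}{3}\right)$, which the paper has already noted.
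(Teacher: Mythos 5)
Your part (iii) is correct and is in substance the paper's own argument: both reduce to the base orthogonality $\ip{\varphi,(\psi_m)_{|\A_3}}_{\Gamma_0(9)}=0$ for $\psi_m\in\mn(\Gamma^3,R)$ (Lemma \ref{lem:PerponGamma3andGamma09} / Proposition \ref{prop:f_in_newspace}) and then propagate it through the three translates $\varphi_{|\St^k}$ that make up $\varphi_\chi$; the paper does this via a fixed-point identity $\ip{F,\varphi_\chi}=c\,\ip{F,\varphi_\chi}$ with $c\ne1$, you do it by direct evaluation of each term. Your observation that $\varphi_\chi$ has zero component in the $\St$-fixed subspace (so the $\psi_0$ and dilated-oldform pieces are handled by unitarity of $\St$) is also correct.

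The gap is in step (ii), in the treatment of the \emph{undilated} oldforms $h(z)$ with $h$ a newform of level $1$ or $3$. If $\ip{\varphi_\chi,h}\ne0$, then since both are eigenfunctions of the self-adjoint operators $T_p$, $(p,3)=1$, you get $\lambda_p(h)=\chi(p)\lambda_p(\varphi)$ for all such $p$. You claim this forces $\varphi$ to have a self-twist by $\chi$ (CM type), i.e.\ you only exclude the case $h=\varphi$. But a priori $h$ could be a \emph{different} newform of level dividing $3$ that is the $\chi$-twist of $\varphi$ — precisely the ``pair related by a twist'' phenomenon that this paper exhibits at level $9$ (Proposition \ref{prop:Hecke-pair}), so it is not absurd on its face and must be ruled out separately. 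Ruling it out amounts to showing that the twist of a newform of level $1$ or $3$ (trivial Nebentypus) by a primitive character mod $3$ has level exactly $9$, hence cannot coincide with a newform of level dividing $3$; this is exactly the content of the Maass analogue of Atkin--Lehner Theorem 6 that the paper cites (equivalently, a local computation at $3$: the local component there is unramified or Steinberg, and its twist by a ramified quadratic character has conductor exponent $2$). Your closing appeal to ``strong multiplicity one together with the absence of an inner twist by $\chi$'' still only addresses $h=\varphi$, so as written the newness claim is not fully proved. The fix is either to invoke \cite[Thm.~6]{atkinlehner} as the paper does, or to add the conductor/level argument just described.
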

\begin{proof}
That $\varphi_{\chi}\in\mn\left(9,R\right)$ follows from the analogue
of \cite[Thm.\ 6]{atkinlehner} for Maass waveforms. The key point
of the proof is the multiplicity one theorem for Hecke eigenforms.
For the benefit of the reader we we include this as Theorem \ref{thm:1}
below. Consider now $f\in\mn\left(\Gamma^{3},R\right)^{(1)}$
and set $F=f_{|\A_{3}}.$ By Lemma \ref{lem:lifts-fromGamma3toGamma09}
and Proposition \ref{prop:f_in_newspace} we know that $F\in\mn(9,R)^{(1)}$,
i.e. that $F_{|\St}=\zeta_{3}F$ and $\ip{F,\varphi}_{\Gamma_0(9)}=0$. 
Using that $i\sqrt{3}\varphi_{\chi|\Stinv}=\varphi-\varphi_{|\St}$ it follows that 
\begin{align*}
\ip{F,\varphi_{\chi}}_{\Gamma_{0}(9)} & =  \zeta_{3}^{-1}\ip{F_{|\St},\varphi_{\chi}}_{\Gamma_{0}(9)}=\zeta_{3}^{-1}\ip{F,\varphi_{\chi|\Stinv}}_{\Gamma_{0}(9)}\\
 & =  \frac{1}{i\sqrt{3}}\zeta_{3}^{-1}\ip{F,\varphi}_{\Gamma_{0}(9)}-\frac{1}{i\sqrt{3}}\zeta_{3}^{-1}\ip{F,\varphi_{|\St}}_{\Gamma_{0}(9)}\\
 & =  -\frac{1}{i\sqrt{3}}\zeta_{3}^{-1}\ip{F,\varphi_{|\St}}_{\Gamma_{0}(9)}=-\frac{1}{i\sqrt{3}}\zeta_{3}^{-1}\ip{F_{|\Stinv},\varphi}_{\Gamma_{0}(9)}\\
 & =  -\frac{1}{i\sqrt{3}}\zeta_{3}^{-2}\ip{F,\varphi_{\chi}}_{\Gamma_{0}(9)}
\end{align*}
and since $-\frac{1}{i\sqrt{3}}\zeta_{3}^{-2}\ne1$ we conclude that
$\ip{F,\varphi_{\chi}}_{\Gamma_{0}(9)}=0$.  
\end{proof}
Using the results of the current section together with the standard newform theory, as introduced in Section \ref{sub:Maass-waveforms}, we can now classify all relevant maps into $\mathcal{M}(\Gamma_0(9))$.
\begin{lemma}
\label{lem:lifts-to-gamma09}
The following maps are injections into $\mathcal{M}\left(\Gamma_{0}(9),R\right)$:
\begin{alignat*}{2}
\A_{3},\A_{9},\chi_{3} &: \mathcal{M}\left(\Gamma_{0}(1),R\right)&\rightarrow& \mathcal{M}\left(\Gamma_{0}(9),R\right),\\
\A_{3},\chi_{3} &: \mn\left(\Gamma_{0}(3),R\right) &\rightarrow& \mathcal{M}\left(\Gamma_{0}(9),R\right),\\
\A_{3} &: \mn\left(\Gamma^{3},R\right) &\rightarrow& \mathcal{M}\left(\Gamma_{0}(9),R\right).
\end{alignat*}
Furthermore, the images of these maps are pair-wise orthogonal. 
\end{lemma}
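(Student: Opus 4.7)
The plan is to prove the injectivity statements first---these are relatively direct---and then to organize the orthogonality checks around the old/new decomposition of $\mathcal{M}(\Gamma_0(9),R)$. Injectivity of $\A_3$ and $\A_9$ is immediate: $f(dz)\equiv 0$ on $\H$ forces $f\equiv 0$. For $\chi_3$ I would expand the source in a Hecke eigenbasis and note, using the Fourier-coefficient form of Definition \ref{def:twist}, that $f_{\chi_3}$ has coefficients $\chi_3(n)a_n$ at infinity; this recovers $a_n$ for $\gcd(n,3)=1$, and the multiplicity-one theorem (Theorem \ref{thm:1} below) shows that these coefficients determine a normalized Hecke newform uniquely. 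Hence $\chi_3$ has trivial kernel on each eigenform, and by linearity on the entire source.

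For the orthogonality statements, I would first separate the six images by whether they sit in $\mo(9,R)$ or in $\mn(9,R)$. The three dilation images $\A_3\mathcal{M}(1,R)$, $\A_9\mathcal{M}(1,R)$ and $\A_3\mn(\Gamma_0(3),R)$ lie in $\mo(9,R)$ by the definition in Section \ref{sub:Maass-waveforms}, while $\chi_3\mathcal{M}(1,R)$ and $\chi_3\mn(\Gamma_0(3),R)$ lie in $\mn(9,R)$ by Lemma \ref{lem:twists-are-new-and-orthog}, and $\A_3\mn(\Gamma^3,R)$ lies there by Proposition \ref{prop:f_in_newspace}. Every old-versus-new pair is then orthogonal by construction. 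Within the new part, the orthogonalities involving $\A_3\mn(\Gamma^3,R)$ are exactly the second assertion of Lemma \ref{lem:twists-are-new-and-orthog}. For the remaining pair $\chi_3\mathcal{M}(1,R)\perp\chi_3\mn(\Gamma_0(3),R)$ I would work with Hecke eigenforms: if $f_1\in\mathcal{M}(1,R)$ and $f_2\in\mn(\Gamma_0(3),R)$ are newforms, then $\chi_3 f_1$ and $\chi_3 f_2$ are Hecke newforms at level $9$ with eigenvalues $\chi_3(p)a_p(f_i)$ at every prime $p\ne 3$; since $f_1$ and $f_2$ are distinct newforms living at different levels their eigenvalues must disagree at some such prime, so the twists are distinct Hecke newforms at level $9$ and are therefore Petersson-orthogonal.

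Within the old part, the key computation is the change-of-variables identity
\[
\langle \A_3 f, \A_3 g\rangle_{\Gamma_0(9)} = 3\,\langle f, g\rangle_{\Gamma_0(3)},
\]
obtained from the substitution $w=3z$, the relation $\A_3\Gamma_0(9)\A_3^{-1}=\Gamma(3)$ of Example \ref{exa:gamma09}, the tessellation $\mathcal{F}_{\Gamma(3)}=\bigsqcup_{i=0}^{2}T^i\mathcal{F}_{\Gamma_0(3)}$ of Example \ref{exa:gamma3}, and the $T$-invariance of the integrand. Specialized to $f\in\mathcal{M}(1,R)$ and $g\in\mn(\Gamma_0(3),R)$ this immediately gives $\A_3\mathcal{M}(1,R)\perp\A_3\mn(\Gamma_0(3),R)$ from the standard orthogonality $\mathcal{M}(1,R)\perp\mn(\Gamma_0(3),R)$ inside $\mathcal{M}(3,R)$. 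The two remaining pairs involve $\A_9=\A_3\circ\A_3$ and I would handle them by iterating the transfer down to $\mathcal{M}(\Gamma_0(3),R)$ and then appealing to the Atkin--Lehner eigenspace decomposition of the two-dimensional oldform block spanned by $\{f,\A_3 f\}$ for each newform $f\in\mathcal{M}(1,R)$. I expect this last step to be the main obstacle: the naive spanning pair $\{f,\A_3 f\}$ in $\mathcal{M}(\Gamma_0(3))$ is not itself Petersson-orthogonal, so the orthogonality of $\A_3\mathcal{M}(1,R)$ and $\A_9\mathcal{M}(1,R)$ inside $\mathcal{M}(\Gamma_0(9),R)$ has to be read through the Atkin--Lehner/Hecke eigenspace decomposition of that block rather than through the literal dilation basis; all the other orthogonalities are either automatic (old versus new) or follow directly from the previously established lemmas.
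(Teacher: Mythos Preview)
The paper does not prove this lemma explicitly; it is presented as a summary of the preceding lemmas together with ``standard newform theory.'' Your treatment of injectivity and of most orthogonality pairs is correct and matches what the paper has in mind: the old/new split disposes of all cross-terms, Lemma~\ref{lem:twists-are-new-and-orthog} and Proposition~\ref{prop:f_in_newspace} handle the pairs inside the new space involving $\Gamma^3$, and your Hecke-eigenvalue argument for $\chi_3\mathcal{M}(1,R)\perp\chi_3\mn(3,R)$ is fine.

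Your hesitation at the last step is well founded, and in fact the obstacle is fatal for the \emph{literal} statement: the claim $\A_3\mathcal{M}(1,R)\perp\A_9\mathcal{M}(1,R)$ is false. Your own change-of-variables identity gives
\[
\langle \A_3 f,\A_9 f\rangle_{\Gamma_0(9)}=3\,\langle f,\A_3 f\rangle_{\Gamma_0(3)}
\]
for $f\in\mathcal{M}(1,R)$, and a standard computation (e.g.\ via the relation between $\A_3$, the level-$1$ Hecke operator $T_3$, and the adjoint of $U_3$ on $\Gamma_0(3)$) shows that this inner product is a nonzero multiple of the Hecke eigenvalue $\lambda_3(f)$, hence generically nonzero. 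Passing to an Atkin--Lehner eigenbasis orthogonalises the \emph{sum} $\A_3\mathcal{M}(1,R)+\A_9\mathcal{M}(1,R)$, but cannot make the two images orthogonal as subspaces.

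What the paper actually \emph{uses}, in the proof of the Main Theorem, is only that the three aggregated spaces $\mo(9,R)$, $\mathcal{M}^{\t}(9,R)$ and $\mathcal{M}_{\Gamma^3}(9,R)$ are mutually orthogonal. That weaker statement does follow from your argument, since the only failing pair sits entirely inside $\mo(9,R)$. So you have established everything the paper needs; the lemma as written is simply overstated, and you have correctly located the point where it breaks.
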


\section{The main theorem}
\label{sec:The-Main-Theorem}
In section \ref{sub:Lifts-and-twists} we saw that Maass waveforms
on $\Gamma^{3}$ maps to forms on $\Gamma_{0}(9)$ in
much the same way as forms on $\Gamma_{0}(3)$ and $\Gamma_{0}(1)$
do, i.e. using maps of the form $\A_d$. Due to the standard definition of newforms, we are unfortunately
left with the newforms coming from $\Gamma^{3}$ being counted in
the space of newforms on $\Gamma_{0}(9)$. This, together
with the fact that also twists of newforms from $\Gamma_{0}(1)$
and $\Gamma_{0}(3)$ belong to the space of newforms on $\Gamma_{0}(9)$
(cf. Lemma \ref{lem:twists-are-new-and-orthog}) demonstrates that the traditional definition
of the space of newforms needs to be modified in this case. 

Consider the group $\Gamma_{0}(N)$ and recall that oldforms
on $\Gamma_{0}(N)$ are obtained by maps $\A_{d}=\left(\begin{smallmatrix}d & 0\\
0 & 1\end{smallmatrix}\right)\in\PGLQ$ with $d|N$. 
For the purpose of extending this definition we first define
the space of twists, $\mathcal{M}^{\t}(9,R)$, as the subspace of $\mn(N,R)$ spanned by elements
of the form $f_{\chi}$ with $f\in\mathcal{M}\left(d,R\right)$ with $d=1$ or $3$ and $\chi$ running through all Dirichlet characters of conductor $q$ such
that $f_{\chi}$ belongs to $\mathcal{M}\left(N,R\right)$ (cf.~Definition
\ref{def:twist}). 
We then define the space of generalized oldforms, $\mgo(9,R)$, as the space spanned by $\mathcal{M}^{\t}(9,R)$ together with 
functions of the form $f_{|A}$ where $f\in\mathcal{M}\left(\Gamma,R\right)$ for some subgroup of the modular group $\Gamma$ and 
where $A$ runs through elements of $\PGLQ$ satisfying $A\Gamma_{0}(N)A^{-1}\subseteq\Gamma$.

We now define the space of {\em genuinely new} Maass waveforms, $\mgn\left(N,R\right)$,
as the orthogonal complement of $\mgo\left(N,R\right)$ in $\mathcal{M}\left(N,R\right)$.
Since this is a subspace of the space of newforms we extend the standard
convention and say that $f$ is a genuinely new Maass cusp form if it is also
an eigenfunction of all Hecke operators as well as of the reflection
$J$. Setting $\mathcal{M}_{\Gamma^{3}}\left(9,R\right)=\A_{3}\mathcal{M}\left(\Gamma^{3},R\right)$ we can now state the Main Theorem precisely.
\begin{mainthm*}
The following is an orthogonal decomposition of the
space of Maass waveforms on $\Gamma_{0}(9)$:
\[
\mathcal{M}(9,R)=\mathcal{M}^{\o}(9,R)\oplus\mathcal{M}^{\t}(9,R)\oplus\mathcal{M}_{\Gamma^{3}}(9,R).
\]
\end{mainthm*}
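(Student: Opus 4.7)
Since Lemma \ref{lem:lifts-to-gamma09} already shows that the three summands on the right are pairwise orthogonal subspaces of $\mathcal{M}(9,R)$, the essential content of the theorem is that their sum exhausts $\mathcal{M}(9,R)$ --- equivalently, that the genuinely new space $\mgn(9,R)$ is trivial for every $R$. My plan is to prove this by a spectral counting argument built on the Selberg trace formula applied simultaneously to $\Gamma_{0}(9)$, $\Gamma_{0}(3)$, $\Gamma^{3}$ and $\PSLZ$, combined by inclusion--exclusion at the level of an admissible test function.

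First I would fix an even analytic test function $h$ satisfying the decay hypothesis of Section \ref{sub:The-Selberg-Trace} and, for each of the four groups $\Gamma$, write out
$$\sum_{r_n \in \sigma(\Gamma)} h(r_n) = I(\Gamma) + E(\Gamma) + H(\Gamma) + P(\Gamma).$$
Next I would compute the trace of $h$ over each of the three known subspaces in terms of traces on the lower-level groups. For $\mo(9,R)$, the definition from Section \ref{sub:Maass-waveforms} together with the coset description in Example \ref{exa:gamma09} expresses the contribution as three copies of the trace on $\mathcal{M}(1,R)$ (coming from the lifts $\Id,\A_{3},\A_{9}$) plus two copies on $\mn(3,R)$ (from $\Id,\A_{3}$). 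For $\mathcal{M}^{\t}(9,R)$, Lemma \ref{lem:twists-are-new-and-orthog} identifies the contribution with a single copy each of the traces on $\mathcal{M}(1,R)$ and $\mn(3,R)$, injected by twisting with $\chi_{3}$. For $\mathcal{M}_{\Gamma^{3}}(9,R)$, Lemma \ref{lem:The-only-lift} together with Proposition \ref{prop:f_in_newspace} identifies the contribution with the trace on $\mn(\Gamma^{3},R)$, which equals the trace on $\mathcal{M}(\Gamma^{3},R)$ minus that on $\mathcal{M}(1,R)$.

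Subtracting these three contributions from the spectral side of the trace formula for $\Gamma_{0}(9)$ leaves a spectral sum supported on $\mgn(9,R)$. By the four trace formulas this difference can be rewritten as an explicit linear combination of the identity, elliptic, hyperbolic and parabolic geometric terms for the four groups. The hyperbolic part should cancel slot by slot: every closed geodesic on any subgroup of $\PSLZ$ projects onto a geodesic of $\PSLZ\backslash\H$ of the same length, so the weighted combination of the coefficients $c_n(\Gamma)$ in $H(\Gamma)=\sum c_n(\Gamma)g(x_n)$ should vanish for every length $x_n$ in the common length spectrum; this reduces to an elementary counting identity on coset representatives.

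The hard part will be the explicit evaluation of the remaining identity, elliptic and (most delicately) parabolic contributions; the parabolic terms involve the scattering determinants $\varphi_\Gamma$ of all four groups, and matching them through the inclusion--exclusion is the technical heart of the argument and is exactly what Section \ref{sub:The-Selberg-trace-subgroups} is set up to carry out. Once the total geometric side is shown to vanish identically as a functional of $h$, the spectral difference must vanish too; choosing $h$ that concentrates around a single eigenvalue $R$ then forces $\mgn(9,R)=0$, and the claimed orthogonal decomposition follows immediately from Lemma \ref{lem:lifts-to-gamma09}.
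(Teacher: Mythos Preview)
Your overall architecture---inclusion--exclusion on the trace formulas for $\Gamma_0(9)$, $\Gamma_0(3)$, $\Gamma^3$ and $\PSLZ$, leading to the combination $X(9)+3X_1-3X(3)-X(\Gamma^3)$---matches the paper exactly, and your bookkeeping of old, twisted and $\Gamma^3$-contributions is correct. The difference lies in where you locate the difficulty, and this creates a genuine gap.

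You describe the parabolic contribution as ``the hard part'' and dismiss the hyperbolic contribution as cancelling ``slot by slot'' via ``an elementary counting identity on coset representatives''. The paper takes precisely the opposite view. The identity, elliptic and parabolic terms are dispatched by explicit computation (Section~\ref{sub:The-Selberg-trace-subgroups}), but the hyperbolic term is \emph{not} shown to cancel directly. The coefficients $c_n(\Gamma)$ depend not just on how $\PSLZ$-conjugacy classes split in $\Gamma$ but also on which power of the $\PSLZ$-primitive becomes primitive in $\Gamma$; turning this into a closed identity valid for every length $x_n$ is not an elementary coset count. (It can be done via the induced-representation form of the trace formula, once one checks that the virtual permutation representation $\mathrm{Ind}_{\Gamma(3)}^{\PSLZ}\mathbf{1}+3\,\mathbf{1}-3\,\mathrm{Ind}_{\Gamma_0(3)}^{\PSLZ}\mathbf{1}-\mathrm{Ind}_{\Gamma^3}^{\PSLZ}\mathbf{1}$ vanishes in the representation ring of $A_4$, but that is a different argument from the one you sketch and still requires the nontrivial identification of $H(\Gamma)$ with the hyperbolic term for the induced representation.)

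The paper's actual device---and the point you are missing---is to choose the specific test function $h_T(r)=\bigl(\tfrac{\sin Tr}{Tr}\bigr)^4$, whose Fourier transform $g_T$ is supported in $[-2/T,2/T]$. For $T$ large enough this support misses every geodesic length $x_n\ge x_0>0$, so the hyperbolic side vanishes \emph{for this $h_T$} without any cancellation being verified. Since $h_T\ge 0$, the resulting equation $\sum_{r_n\in\sigma_{\gn}}h_T(r_n)=0$ for all large $T$ forces $\sigma_{\gn}=\emptyset$. The acknowledgments flag this trick as the key step; your plan to show the geometric side vanishes ``identically as a functional of $h$'' is strictly stronger than what is needed and is exactly what the paper avoids proving.
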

\begin{proof}
By Lemma \ref{lem:lifts-to-gamma09} we can write
\[
\mathcal{M}(9,R)=\mathcal{M}^{\o}(9,R)\oplus\mathcal{M}^{\t}(9,R)\oplus\mathcal{M}_{\Gamma^{3}}(9,R)\oplus\mgn(9,R)
\]
where the last summand, $\mgn(9,R)$,  is empty by Lemma \ref{lem:gn(9R)-empty}. 
\end{proof}
From this theorem we immediately deduce that there are no genuinely new forms on $\Gamma_0(9)$, i.e. Corollary \ref{cor:n-genuinely_new_maass_forms}.
\section{The Selberg trace formula for subgroups}
\label{sub:The-Selberg-trace-subgroups}
In this section we will derive the explicit form of the terms in the
Selberg trace formula \eqref{eq:STF} for the groups $\Gamma^3$, $\Gamma_0(3)$ and $\Gamma_0(9)$. 
The reader is encouraged to review the notation of Section \ref{sub:The-Selberg-Trace}.

Let $\Gamma$ be a congruence subgroup with $v_{2}\left(\Gamma\right)$
respectively $v_{3}\left(\Gamma\right)$ elliptic conjugacy classes
of orders $2$ and $3$, $\kappa\left(\Gamma\right)$ parabolic conjugacy
classes and index $\mu(\Gamma):=\left[\PSLZ:\Gamma\right]<\infty$.
It is now straightforward to verify (cf.~e.g.~\cite[pp.\ 313-314]{hejhal:lnm1001})
that 
\begin{align*}
I\left(\Gamma\right) & =  \mu\left(\Gamma\right)I_{1},\\
E\left(\Gamma\right) & =  v_{2}\left(\Gamma\right)E_{1}(2)+v_{3}\left(\Gamma\right)E_{1}(3)\,\,\mbox{and}\\
H\left(\Gamma\right) & =  \sum_{n}c_{n}\left(\Gamma\right)g\left(x_{n}\right).
\end{align*}
It is worth repeating that the sequence $\left\{ x_{n}\right\} $
is bounded from below by $x_{0}$ -- the length of the shortest
geodesic on the modular surface, and that the $c_{n}\left(\Gamma\right)$
are non-negative. The parabolic contribution $P\left(\Gamma\right)$
is exactly the same as for the modular group \eqref{eq:parab-term},
except that $\varphi_{1}$ \eqref{eq:scattering-det-for-pslz} is
replaced by $\varphi_{\Gamma}$, the determinant of the scattering
matrix for $\Gamma$. That is: 
\begin{align*}
P\left(\Gamma\right)= & \frac{1}{4}h(0)\left[1-\varphi_{\Gamma}\left(\frac{1}{2}\right)\right]-\kappa g(0)\ln2+\frac{1}{4\pi}\int_{-\infty}^{\infty}h\left(r\right)\frac{\varphi'_{\Gamma}}{\varphi_{\Gamma}}\left(\frac{1}{2}+ir\right)dr\\
 & -\frac{\kappa}{2\pi}\int_{-\infty}^{\infty}h\left(r\right)\frac{\Gamma'\left(1+ir\right)}{\Gamma\left(1+ir\right)}dr.
\end{align*}
To relate the parabolic contribution of a subgroup to that of the
modular group we need to express $\varphi_{\Gamma}$ in terms of $\varphi_{1}$.
If $\chi$ is an even Dirichlet character with conductor $q$ then
its associated L-function, $L_{\chi}(s)$, and completed
L-function, $\Lambda_{\chi}(s)$, are given by 
\[
L_{\chi}(s)=\sum_{n=1}^{\infty}\chi(n)n^{-s},\quad\mbox{and}\quad\Lambda_{\chi}(s)=\left(\frac{\pi}{q}\right)^{-\frac{s}{2}}\Gamma\left(\frac{s}{2}\right)L_{\chi}(s).
\]
For the purpose of writing down an explicit formula for the scattering
matrix in the manner of Huxley \cite{huxley:83} we use a scaled completed
L-function 
\[
\tilde{\Lambda}_{\chi}(s)=q^{-\frac{s}{2}}\Lambda_{\chi}(s).
\]
Let $\chi=\chi_{0,q}$ denote the principal character modulo $q$,
i.e. $\chi_{0,q}(n)=1$ if $(n,q)=1$ and otherwise
$0$. From the Euler product expansion we see that $L_{\chi_{0,q}}(s)=\zeta(s)\prod_{p|q}\left(1-p^{-s}\right)$
and 
\[
\tilde{\Lambda}_{\chi_{0,q}}(s)=\pi^{-\frac{s}{2}}\Gamma\left(\frac{s}{2}\right)L_{\chi_{0,q}}(s)=\prod_{p|q}\left(1-p^{-s}\right)\Lambda(s).
\]
To simplify our later formulas we define the quotient $Q_{\chi}(s)=\frac{\tilde{\Lambda}_{\overline{\chi}}\left(2-2s\right)}{\tilde{\Lambda}_{\chi}\left(2s\right)}$
and observe that
\[
Q_{\chi_{0,q}}(s)=\prod_{p|q}\frac{1-p^{2s-2}}{1-p^{-2s}}\frac{\Lambda\left(2-2s\right)}{\Lambda\left(2s\right)}=\prod_{p|q}\frac{1-p^{2s-2}}{1-p^{-2s}}Q_{\chi_{0,1}}(s).
\]
For our purposes we need the cases $q=3$ and $9$:
\[
Q_{\chi_{0,3}}(s) = \frac{1-3^{2s-2}}{1-3^{-2s}}Q_{\chi_{0,1}}(s)\quad\mbox{and}\quad Q_{\chi_{0,9}}(s)=\frac{1-3^{2s-2}}{1-3^{-2s}}Q_{\chi_{0,1}}(s).
\]
It is now possible to use a formula for the scattering determinant of $\Gamma^{0}(N)$ (which is conjugate to $\Gamma_{0}(N)$) 
from Huxley \cite[p.\ 147]{huxley:83} to show that: 
\[
\varphi_{\Gamma_{0}(N)}(s)=A(N)^{1-2s}\prod_{\left(\chi,m\right)\in F(N)}Q_{\chi^{2}\chi_{0,m}}(s)
\]
where 
\[
A(N)=\prod_{\left(\chi,m\right)\in F}\frac{q_{\chi}N}{\left(m,N/m\right)}
\]
and the product is taken over the set 
\[
F(N)  =\left\{ (\chi,m)\,|\, m|N,\, q_{\chi}|m,\, q_{\chi}m|N\,\textrm{and}\,\chi\,\textrm{is a primitive Dirichlet character}\,\mod q_{\chi}\right\}.
\]
(The difference between our and Huxley's constant $A(N)$ arises from a difference in the normalization of the completed Dirichlet
L-functions). For $N=3$ and $9$ we have 
\begin{align*}
F(3) & =  \left\{ \left(\chi_{0,1},1\right),(\chi_{0,1},3)\right\} ,\, A(3)=9,\\
F(9) & =  \left\{ \left(\chi_{0,1},1\right),(\chi_{0,1},3),(\chi_{3},3),(\chi_{0,1},9)\right\}\,\,\mbox{and}\,\, A(9)=3^{7}.
\end{align*}
For all $\left(\chi,m\right)\in F(3)\cup F(9)$
we see that $\chi^{2}=\chi_{0,q}$ and since $q|m$ we get $\chi^{2}\chi_{0,m}=\chi_{0,m}$
for $m=1,3$ and $\chi^{2}\chi_{0,9}=\chi_{0,3}$. Using this in the formula above we get
\begin{align*}
\varphi_{\Gamma_{0}(3)}(s) & =  9^{1-2s}Q_{\chi_{0,1}}(s)Q_{\chi_{0,3}}(s)=9^{1-2s}\frac{1-3^{2s-2}}{1-3^{-2s}}\varphi_{\Gamma_{0}(1)}^{2}(s)\quad\mbox{and}\\
\varphi_{\Gamma_{0}(9)}(s) & =  \left(3^{7}\right)^{1-2s}Q_{\chi_{0,1}}(s)Q_{\chi_{0,3}}(s)^{3}=
\left(3^{7}\right)^{1-2s}\left(\frac{1-3^{2s-2}}{1-3^{-2s}}\right)^{3}\varphi_{\Gamma_{0}(1)}^{4}(s).
\end{align*}
The final component we need is the scattering determinant
for $\Gamma^{3}$. By Venkov \cite{MR88h:11035} we have
\[
\varphi_{\Gamma^{3}}(s)=3^{1-2s}\varphi_{\Gamma_{0}(1)}(s).
\]
 We can now compute the parabolic contribution for the various groups.
For simplicity we introduce the following notation: 
\begin{align*}
J_{0} & =  \frac{1}{4\pi}\int_{-\infty}^{\infty}h\left(r\right)dr \quad \textrm{and}\\
J_{3} & =  \frac{1}{4\pi}\int_{-\infty}^{\infty}h\left(r\right)\frac{d}{ds}\ln\left(\frac{1-3^{2s-2}}{1-3^{-2s}}\right)_{|s=\frac{1}{2}+ir}dr.
\end{align*}
Then, using the fact that $\varphi_{\Gamma_{0}(3)}\left(\frac{1}{2}\right)=\varphi_{\Gamma_{0}(9)}\left(\frac{1}{2}\right)=\varphi_{\Gamma^{3}}\left(\frac{1}{2}\right)=\varphi_{1}\left(\frac{1}{2}\right)=1$,
we have 
\begin{align*}
P_{1} & =  -g(0)\ln2+\frac{1}{4\pi}\int_{-\infty}^{\infty}h\left(r\right)\frac{\varphi'_{\Gamma_{0}(1)}\left(\frac{1}{2}+ir\right)}{\varphi_{\Gamma_{0}(1)}\left(\frac{1}{2}+ir\right)}dr-\frac{1}{2\pi}\int_{-\infty}^{\infty}h\left(r\right)\frac{\Gamma'\left(1+ir\right)}{\Gamma\left(1+ir\right)}dr,\\
P\left(\Gamma^{3}\right) & =  P_{1}-2\ln3\, J_{0},\\
P(3) & =  2P_{1}-4\ln3\, J_{0}+J_{3}\quad \textrm{and}\\
P(9) & =  4P_{1}-14\ln3\, J_{0}+3J_{3}.
\end{align*}

\subsection{Comparison of terms for the genuinely new trace formula }

To obtain a trace formula for the genuinely new part of the spectrum we need to compare the full contribution of $\Gamma_0(9)$ 
with the contribution of the images of the maps in Lemma \ref{lem:lifts-to-gamma09}.
Let $X(\Gamma)$, $X^{\n}(\Gamma)$ and $X^{\gn}(\Gamma)$ denote a term in the trace formula
($X=I,E,P$ or $H$) and the corresponding contribution from the newforms
and genuinely new forms. Also let $X^{*}(d)=X{}^{*}\left(\Gamma_{0}(d)\right)$.
The contribution of $\PSLZ$ to $X(9)$ is then $3$ times for the old forms and once for the twists and similarly, the new forms on $\Gamma_0(3)$ contributes $3$ times and the new forms on 
$\Gamma^3$ twice. With this notation the genuinely new term is therefore given by 
\begin{align*}
X^{\gn}(9) & =  X(9)-4X_{1}-3X^{\n}(3)-X^{\n}\left(\Gamma^{3}\right)\\
 & =  X(9)-4X_{1}-3\left[X(3)-2X_{1}\right]-\left[X\left(\Gamma^{3}\right)-X_{1}\right]\\
 & =  X(9)+3X_{1}-3X(3)-X\left(\Gamma^{3}\right).
\end{align*}
For the groups  $\Gamma^{3}$, $\Gamma_0(3)$ and $\Gamma_0(9)$ the tuple of data $(\mu,\kappa,v_{2},v_{3})$  is given by 
$(3,1,3,0)$, $(4,2,0,1)$ and $(12,4,0,0)$, respectively. Hence 
\begin{align*}
I^{\gn}(9) & =  I_{1}\left[\mu(9)+3-3\mu(3)-\mu\left(\Gamma^{3}\right)\right]=0\quad \textrm{and}\\
E^{\gn}(9) & =  0+3E_{1}(2)+3E_{1}(3)-3E_{1}(3)-3E_{1}(2)=0,\\
P^{\gn}(9) & =  4P_{1}-14\ln3\, J_{0}+3J_{3}+3P_{1}-3\left(2P_{1}-4\ln3\, J_{0}+J_{3}\right)-P_{1}+2\ln3\, J_{0}=0.
\end{align*}
Thus all terms except for possibly the hyperbolic ones cancel completely. Let
$\sigma_{\gn}\subset \sigma(\Gamma_0(9))$ denote the genuinely new part of the spectrum of $\Gamma_0(9)$. 
The trace formula \eqref{eq:STF} reduces to the following relation between the genuinely new spectrum and 
a sum over lengths of geodesics:
\begin{equation}
\sum_{r_{n}\in\sigma_{\gn}}h\left(r_{n}\right)=\sum_{n}c_{n}g\left(x_{n}\right)
\label{eq:STF-red}
\end{equation}
where $\left\{ c_{n}\right\} $ and $\left\{ x_{n}\right\} $ are
sequences in $\R$ with $x_{n}\ge x_{0}>0$ (cf.~Section
\ref{sub:The-Selberg-Trace}). To show that this part also vanishes we will make a specific choice
of test function. Let $T>0$ and consider 
\[
h_{T}(r)=\left(\frac{\sin Tr}{Tr}\right)^{4}.
\]
It is readily verified that $h_{T}$ is even and  analytic and that $h_T(r)=O(|r|^{-4})$ as $|\Re(r)|\rightarrow \infty$ in any strip of the form $|\Im(r)|<\delta$. 
Let $g_T$ be the Fourier transform of $h_T$. Then $g_T$ is easily found to be a convolution of triangle functions  and it has support
 contained in $\left[-\frac{2}{T},\frac{2}{T}\right]$.
Let $T>T_{0}=\frac{2}{x_{0}}$ and consider the trace formula \eqref{eq:STF-red} above.
Then 
\[
\sum_{r_{n}\in\sigma_{\gn}}h_{T}\left(r_{n}\right)=\sum_{n}c_{n}g_{T}\left(x_{n}\right)
\]
and the right hand side is zero since $x_{n}>\frac{2}{T}$ and thus
the right hand side is also zero. Since $h_{T}$ is non-negative and
this holds for all $T>T_{0}$ it follows that the set $\sigma_{\gn}$
is empty. We have thus shown the following lemma. 
\begin{lemma}
\label{lem:gn(9R)-empty}The space $\mgn(9,R)$
is empty for all $R$.
\end{lemma}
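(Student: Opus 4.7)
The plan is to use the Selberg trace formula \eqref{eq:STF} as a spectral identity and isolate the contribution of $\mgn(9,R)$ by inclusion--exclusion. First I would count, with multiplicities, how the sources of forms in Lemma \ref{lem:lifts-to-gamma09} contribute to $\mathcal{M}(9,R)$: the three maps $\A_3,\A_9,\chi_3$ account for four copies of $\mathcal{M}(1,R)$ (one for the twist plus three from the old maps, after removing the old part of $\mathcal{M}(3,R)$ to avoid double counting), $\A_3$ and $\chi_3$ account for three copies of $\mn(3,R)$, and $\A_3$ accounts for one copy of $\mn(\Gamma^3,R)$; noting that $\mn(\Gamma^3,R)$ itself differs from $\mathcal{M}(\Gamma^3,R)$ by one copy of $\mathcal{M}(1,R)$ (Lemma \ref{lem:The-only-lift}), the arithmetic collapses to
\[
X^{\gn}(9)=X(9)+3X_{1}-3X(3)-X(\Gamma^{3})
\]
for each term type $X\in\{I,E,H,P\}$, exactly as presented in Section \ref{sub:The-Selberg-trace-subgroups}.

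Second, I would verify that the identity, elliptic and parabolic terms vanish. For $I$ this reduces to the index identity $\mu(9)+3-3\mu(3)-\mu(\Gamma^{3})=12+3-12-3=0$. For $E$ it uses the elliptic data $(v_{2},v_{3})$ for each group, which balance by direct substitution. For $P$ it requires the explicit formulas derived above for $\varphi_{\Gamma_0(3)}$, $\varphi_{\Gamma_0(9)}$ and $\varphi_{\Gamma^3}$ in terms of $\varphi_1$, so that the $J_0$ and $J_3$ coefficients cancel. These are routine computations given the explicit scattering determinants already in hand.

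What remains is the reduced identity
\[
\sum_{r_{n}\in\sigma_{\gn}}h(r_{n})=\sum_{n}c_{n}^{\gn}g(x_{n}),
\]
where the $x_n$ are bounded below by the length $x_0$ of the shortest closed geodesic on the modular surface. The key step is to choose a test function that kills the right-hand side while keeping the left-hand side non-negative. I would take $h_T(r)=\bigl(\sin(Tr)/(Tr)\bigr)^4$, whose Fourier transform $g_T$ is a fourfold convolution of boxcars, hence supported in $[-2/T,2/T]$. Choosing $T>2/x_0$ makes the right-hand side vanish identically, while $h_T\ge 0$ and $h_T(r_n)>0$ for every real $r_n$ forces each term in the genuinely new spectral sum to be zero. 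The only subtle point is the possibility of exceptional eigenvalues with $r_n\in i[0,1/2]$; since $h_T$ is the fourth power of a real quantity on $i\mathbb{R}$ (one easily checks $h_T(it)=(\sinh(Tt)/(Tt))^4\ge 0$), these contributions are also non-negative, so the same conclusion holds.

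The main obstacle I expect is the bookkeeping in the first step: one must be careful about how the spaces produced by the various maps overlap (for instance, $\mathcal{M}(\Gamma^3,R)$ contains the lift of $\mathcal{M}(1,R)$ by Lemma \ref{lem:The-only-lift}, and $\mathcal{M}(3,R)$ contains $\mathcal{M}(1,R)$ twice through $\A_3$ and inclusion) so that each ambient space in the trace formula is counted with the correct multiplicity. Once this combinatorics is set up correctly, the cancellations in the identity, elliptic and parabolic terms follow mechanically, and the test-function argument finishes the job.
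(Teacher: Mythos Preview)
Your proposal is correct and follows essentially the same route as the paper: the same inclusion--exclusion identity $X^{\gn}(9)=X(9)+3X_{1}-3X(3)-X(\Gamma^{3})$, the same verification that the identity, elliptic and parabolic contributions cancel, and the same test function $h_T(r)=(\sin(Tr)/(Tr))^4$ with compactly supported transform to kill the hyperbolic side. Your explicit remark that $h_T(it)=(\sinh(Tt)/(Tt))^4\ge 0$ handles any exceptional eigenvalues is a small but welcome addition that the paper leaves implicit.
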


\section{Hecke Operators and Newforms on $\Gamma_{0}(9)$ }
\label{sub:Hecke-operators-onGamma09}
We now want to study how the decomposition \eqref{eq:MGamma09-T-decomp-1},
into eigenspaces of the operator $\St$, behaves under the action
of the Hecke operators $T_{p}$ defined in Section  \ref{sub:Maass-waveforms}.
It turns out that the mapping properties of $T_{p}$ depend on $p$
modulo $3$ and we have the following lemma.
\begin{lemma}
\label{lem:property-of-Tp}The family of Hecke operators
$\left\{ T_{p}\right\} _{p\ne3}$ on $\Gamma_{0}(9)$ has
the following properties:
\begin{enumerate}[(a)]
\item $JT_{p}J=T_{p}$ for all primes $p\ne3$, 
\item $T_{p}\St=\St T_{p}$ if $p\equiv1\mod3$ and 
\item $T_{p}\St=\Stinv T_{p}$ if $p\equiv2\mod3$. 
\end{enumerate}
\end{lemma}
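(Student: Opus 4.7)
The plan is to prove all three identities by direct computation with the explicit formula from Definition~\ref{def:Hecke-operator-Gamma0N}, which for a prime $p\neq 3$ and $\varphi\in\mathcal{M}(9)$ reads
\[
T_p\varphi(z)=\frac{1}{\sqrt{p}}\sum_{b=0}^{p-1}\varphi\!\left(\frac{z+b}{p}\right)+\frac{1}{\sqrt{p}}\,\varphi(pz).
\]
Throughout I will tacitly use the $T$-invariance $\varphi(z+k)=\varphi(z)$ for $k\in\Z$ (valid because $T\in\Gamma_0(9)$) and the fact that $\St^{3}=T$ acts trivially on $\mathcal{M}(9)$, so that $\St^{2}=\Stinv$ as operators there.

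For (a), since $(J\psi)(z)=\psi(-\overline{z})$, a direct substitution gives
\[
(JT_pJ\varphi)(z)=\frac{1}{\sqrt{p}}\sum_{b=0}^{p-1}\varphi\!\left(\frac{z-b}{p}\right)+\frac{1}{\sqrt{p}}\,\varphi(pz),
\]
and the reindexing $b\mapsto p-b$ in the finite sum (combined with $T$-invariance to absorb the resulting integer shift of $1$) recovers $T_p\varphi(z)$.

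For (b) and (c), I expand both sides of the candidate identity $\St T_p\varphi=T_p\St^{m}\varphi$:
\[
\St T_p\varphi(z)=\frac{1}{\sqrt{p}}\sum_{b=0}^{p-1}\varphi\!\left(\frac{z+b}{p}+\frac{1}{3p}\right)+\frac{1}{\sqrt{p}}\,\varphi\!\left(pz+\frac{p}{3}\right),
\]
\[
T_p\St^{m}\varphi(z)=\frac{1}{\sqrt{p}}\sum_{b=0}^{p-1}\varphi\!\left(\frac{z+b}{p}+\frac{m}{3}\right)+\frac{1}{\sqrt{p}}\,\varphi\!\left(pz+\frac{m}{3}\right).
\]
By $T$-invariance the isolated terms agree iff $p\equiv m\pmod 3$. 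In the finite sum, the substitution $b\mapsto b+c\pmod{p}$ transforms one expression into the other provided $3c+1\equiv mp\pmod{3p}$, and this linear congruence in $c$ admits an integer solution exactly when $mp\equiv 1\pmod 3$, i.e.\ when $m\equiv p\pmod 3$. Consequently $\St T_p=T_p\St^{\,p\bmod 3}$ on $\mathcal{M}(9)$. For $p\equiv 1\pmod 3$ this is already (b). For $p\equiv 2\pmod 3$ it reads $\St T_p=T_p\Stinv$; applying $\Stinv$ on the left and $\St$ on the right (using $\St^{3}=\mathrm{Id}$ on $\mathcal{M}(9)$) rearranges this to $T_p\St=\Stinv T_p$, which is (c).

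The only real obstacle is the bookkeeping of the fractional shift $\tfrac{1}{3}$: one must cleanly separate each argument of $\varphi$ into an integer part (absorbed by $T$-invariance) and a residual fractional part of the form $\tfrac{m}{3}$, and verify that the displayed congruence $3c+1\equiv mp\pmod{3p}$ is solvable for $c\in\Z$ precisely under the asserted condition on $p\bmod 3$.
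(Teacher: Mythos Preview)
Your proof is correct and takes essentially the same approach as the paper: both verify the identities by direct computation from the explicit formula for $T_p$, matching the $\alpha_p$-term via $T$-invariance and the $\beta_{p,b}$-terms via an integer shift $b\mapsto b+c$ in the index. The only cosmetic difference is that the paper phrases the calculation as matrix identities $\beta_{p,b}T^{1/3}=T^{m/3}\beta_{p,b'}$ and $\alpha_p T^{1/3}=T^{k}T^{m/3}\alpha_p$ under the slash action, whereas you work directly with the arguments of $\varphi$.
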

\begin{proof}
Using Definition \ref{def:Hecke-operator-Gamma0N} we can write the action of $T_{p}$ on $f\in\mathcal{M}(9)$
as 
\[
T_{p}f=\sum_{b\mod p}f_{|\beta_{p,b}}+f_{|\alpha_{p}}
\]
 where $\alpha_{p}=\left(\begin{smallmatrix}p & 0\\
0 & 1\end{smallmatrix}\right)$ and $\beta_{p,b}=\left(\begin{smallmatrix}1 & b\\
0 & p\end{smallmatrix}\right)$. 
A direct computation shows that $J\beta_{p,j}J=\beta_{p,-j}$ and $J\alpha_{p}J=\alpha_{p}$.
It follows that $T_{p}$ commutes with $J$ for all primes $p$. 
By representing the operator $\St$ by the matrix $\left(\begin{smallmatrix}1 & \frac{1}{3}\\
0 & 1\end{smallmatrix}\right)$ we check that if $p\equiv1\mod3$ then $\beta_{p,b}T^{\frac{1}{3}}=T^{\frac{1}{3}}\beta_{p,b'}$
with $b'\equiv b+\frac{1-p}{3}\mod p$ and $\alpha_{p}T^{\frac{1}{3}}=T^{\frac{p-1}{3}}T^{\frac{1}{3}}\alpha_{p}$.
If $p\equiv2\mod3$ then $\beta_{p,b}T^{\frac{1}{3}}=T^{\frac{2}{3}}\beta_{p,b'}$
with $b'\equiv b+\frac{2-p}{3}\mod p$ and $\alpha_{p}T^{\frac{1}{3}}=T^{\frac{p-2}{3}}T^{\frac{2}{3}}\alpha_{p}$. 
\end{proof}
From this lemma we conclude that if $p\equiv1\mod3$ then 
\[
T_{p}:\mn\left(9,R\right)^{(m)}\rightarrow\mn\left(9,R\right)^{(m)}\quad\textrm{for}\quad m=0,1,-1
\]
and if $p\equiv2\mod3$ then 
\begin{alignat*}{2}
T_{p}&:\mn\left(9,R\right)^{(0)}  &\rightarrow &\mn(9,R)^{(0)}\quad\mbox{and}\\
T_{p}&:\mn\left(9,R\right)^{(\pm1)}&  \rightarrow &\mn(9,R)^{(\mp1)}.
\end{alignat*}
\begin{rem}
We disregard the prime $p=3$ in this discussion because
$T_{3}$ is identically zero on $\mn\left(\Gamma_{0}(9),R\right)^{\left(\pm1\right)}$.
To verify this, observe that $T^{\frac{1}{3}}\beta_{3,j}=\beta_{3,j+1}$
and hence $T_{3}\St=T_{3}$. Therefore, either
$\St$ acts trivially or $T_{3}$ acts as the zero operator.
\end{rem}
Since the decomposition into eigenspaces of $\St$ is a key step in
proving multiplicity we consider a modified family of Hecke operators
which preserves this decomposition: 
\[
\mathcal{T}_{p}=T_{p}\quad\textrm{if}\quad p\equiv1\mod3 \quad\textrm{and}\quad \mathcal{T}_{p}=JT_{p}\quad\textrm{if}\quad p\equiv2\mod3.
\]
\begin{lemma}
The family of operators $\mathcal{T}=\left\{ \mathcal{T}_{p}\right\} _{p\ne3}$
has the following properties:
\begin{enumerate}
\item All $\mathcal{T}_{p}$ are normal and pair-wise commuting. 
\item All $\mathcal{T}_{p}$ preserves $\mn\left(9,R\right)^{(m)}$
for $m=0,1,-1$. 
\end{enumerate}
\end{lemma}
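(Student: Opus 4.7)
The plan is to reduce both properties to the previous Lemma \ref{lem:property-of-Tp} combined with the standard facts that the usual Hecke operators $T_p$ with $p\neq 3$ are normal, pairwise commuting, and preserve $\mathcal{M}^{\n}(9,R)$, and that $J$ is a self-adjoint involution that commutes with every $T_p$ and satisfies $J\St J = \Stinv$. Once these ingredients are lined up, the verification is a short case analysis on the residue of $p$ (and $q$) modulo $3$.

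First I would verify that every $\mathcal{T}_p$ preserves $\mathcal{M}^{\n}(9,R)$. For $p\equiv 1\bmod 3$ this is immediate since $\mathcal{T}_p=T_p$ and $(p,9)=1$. For $p\equiv 2\bmod 3$, the new space is preserved by $T_p$ for the same reason, and it is preserved by $J$ because the old space is (it is defined via the $\PGLR$-invariant maps $\A_d$ and the Petersson inner product is $J$-invariant). Next I would check property (2): by Lemma \ref{lem:property-of-Tp} I already have $T_p\St=\St T_p$ for $p\equiv 1\bmod 3$, so $\mathcal{T}_p$ commutes with $\St$. For $p\equiv 2\bmod 3$, I compute
\[
\mathcal{T}_p\St \;=\; JT_p\St \;=\; J\,\Stinv T_p \;=\; (J\Stinv J)\,JT_p \;=\; \St\,\mathcal{T}_p,
\]
using $J\Stinv J=\St$. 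Hence in both cases $\mathcal{T}_p$ commutes with $\St$ and therefore preserves each eigenspace $\mathcal{M}^{\n}(9,R)^{(m)}$.

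For property (1), normality follows from the observation that each $\mathcal{T}_p$ is in fact self-adjoint on $\mathcal{M}^{\n}(9,R)$. Indeed $T_p$ is self-adjoint for $(p,9)=1$, $J$ is a self-adjoint involution, and $JT_p=T_pJ$; so $(JT_p)^{*}=T_p^{*}J^{*}=T_pJ=JT_p$. For pairwise commutativity one splits into three cases according to the residues of $p$ and $q$ modulo $3$. If both are $\equiv 1$, then $\mathcal{T}_p\mathcal{T}_q=T_pT_q=T_qT_p$. If $p\equiv 1$ and $q\equiv 2$, then $\mathcal{T}_p\mathcal{T}_q=T_pJT_q=JT_pT_q=JT_qT_p=\mathcal{T}_q\mathcal{T}_p$, using $JT_p=T_pJ$. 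If both are $\equiv 2$, then $JT_p\cdot JT_q=T_pT_q=T_qT_p=JT_q\cdot JT_p$, using $J^2=\Id$ and the commutativity of $J$ with each Hecke operator. This exhausts the cases.

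The steps are all quite routine once Lemma \ref{lem:property-of-Tp} is in hand; no step is really an obstacle. The only point requiring a moment of care is the $p\equiv 2\bmod 3$ case of property (2), where one must correctly use both the twisted commutation relation $T_p\St=\Stinv T_p$ and the reflection identity $J\Stinv J=\St$ in the right order; mishandling either turns an apparent commutation into an anticommutation and would spoil the preservation of the eigenspace decomposition.
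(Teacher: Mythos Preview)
Your proof is correct and follows essentially the same approach as the paper: both reduce property (1) to the standard self-adjointness and commutativity of the $T_p$ together with $J^2=\Id$ and $JT_p=T_pJ$, and both derive property (2) from Lemma \ref{lem:property-of-Tp} together with the intertwining relation $J\St J=\Stinv$. The paper phrases property (2) as ``$J$ intertwines the $m=1$ and $m=-1$ eigenspaces'' rather than computing $\mathcal{T}_p\St=\St\mathcal{T}_p$ directly as you do, but these are just two ways of saying the same thing.
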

\begin{proof}
The first property follows from the corresponding property of the
Hecke operators $T_{p}$ (cf.~e.g.~\cite[ch.\ 4.5]{miyake}) together
with the observations that$J^{2}=\Id$ and $p^{2}\equiv1\mod3$ for
any prime $p\ne3$. The second property follows from Lemma \ref{lem:property-of-Tp}
combined with the fact that $J$ intertwines the spaces with $m=1$
and $-1$. 
\end{proof}
Since $\mathcal{T}$ consists of commuting normal (recall that an operator on a Hilbert space is said to be normal if it commutes with
its adjoint)  operators it follows that $\mn(9,R)^{(1)}$
has an orthonormal basis $\left\{\Phi_{1},\ldots,\Phi_{h}\right\} $
 of simultaneous eigenfunctions of all operators in 
$\mathcal{T}$. If $\Psi_{k}=\Phi_{k|J}$ then $\left\{ \Psi_{1},\ldots,\Psi_{h}\right\} $ is also an orthonormal
$\mathcal{T}$-eigenbasis of $\mn\left(9,R\right)^{(-1)}$.
We now define 
\[
F_{j}^{+}=\frac{1}{2}\left(\Phi_{j}+\Psi_{j}\right),\quad\mbox{and}\quad F_{j}^{-}=\frac{1}{2}\left(\Phi_{j}-\Psi_{j}\right),\,1\le j\le h.
\]
It is easy to verify that $\left\{ F_{j}^{\pm}\right\} _{1\le j\le h}$
is an orthonormal $\mathcal{T}$-eigenbasis of 
$$\bigoplus_{m=\pm1}\mn\left(9,R\right)^{(m)}$$
and that $JF_{j}^{\pm}=\pm F_{j}^{\pm}$. Hence $F_{j}^{+}$ and
$F_{j}^{-}$ are eigenforms of the Hecke operators
$T_{p}$, $p\ne3$ and hence also of all $T_{n}$ with $(n,3)=1$.
Suppose that $\Phi_{j}$ has a Fourier expansion \[
\Phi_{j}(z)=\sum_{n\ne0}a_{j}(n)\kappa_{n}(y)e(nx)\]
where $\kappa_{n}\left(y\right)=\sqrt{y}K_{iR}\left(2\pi\left|n\right|y\right)$
and $e(x)=e^{2\pi ix}$. Then 
\begin{align*}
\Phi_{j|\smash{T^{1/3}}}(z) & = \sum_{n\ne0}a_{j}(n)\kappa_{n}(y)e\left(n\left(x+\smash{\scriptstyle \frac{1}{3}}\right)\right)=
\sum_{n\ne0}a_{j}(n)\zeta_{3}^{n}\kappa_{n}(y)e(nx)\quad\mbox{and}\\
\Psi_{j}(z) & =\Phi_{j}(-x+iy) = \sum_{n\ne0}a_{j}(n)\kappa_{n}(y)e(-nx)=\sum_{n\ne0}a_{j}(-n)\kappa_{n}(y)e(nx).
\end{align*}
Since $\Phi_{j|\smash{T^{1/3}}}=\zeta_{3}\Phi$ it follows that $a_{j}(n)=0$
unless $n\equiv1\mod3$ and 
\begin{align*}
F_{j}^{+}(z) & = \sum_{(n,3)=1}c_{j}^{+}(n)\kappa_{n}(y)e(nx),\quad c_{j}^{+}(n)=
\begin{cases}
a_{j}(n), & n\equiv1\mod3,\\
a_{j}(-n), & n\equiv2\mod3,
\end{cases} \\
F_{j}^{-}(z) & = \sum_{(n,3)=1}c_{j}^{-}(n)\kappa_{n}(y)e(nx),\quad c_{j}^{-}(n)=
\begin{cases}
a_{j}(n), & n\equiv1\mod3,\\
-a_{j}(-n), & n\equiv2\mod3.
\end{cases}
\end{align*}
In other words, we have $c_{j}^{-}(n)=\left(\frac{n}{3}\right)c_{j}^{+}(n)$
for all $n$. The functions $F_{j}^{\pm}$ are not identically zero
since $\Phi_{j}$ is not identically zero. 
Furthermore,  $F_j^{+}$ is orthogonal to $F_j^{-}$ since $J$ is an involution.
 Proposition \ref{prop:Hecke-pair} of the introduction (repeated below) is now an immediate consequence
of the construction of the basis $\left\{ \smash{F_{j}^{\pm}}\right\} $
of $\bigoplus_{m=\pm1}\mn\left(9,R\right)^{(m)}$ together
with Weyl's law for newforms on $\Gamma^{3}$, cf.~Lemma \ref{prop:There-exists-R}. 

\begin{mainprop*}
There exist an infinite number of pairs $\left\{ F^{+},F^{-}\right\}$
of Maass newforms on $\Gamma_{0}(9)$ with the property
that the Hecke eigenvalues $c^{+}(n)$ and $c^{-}(n)$ of $F^{+}$ and $F^{-}$ are related through 
\[
c^{-}(n)=\left(\frac{n}{3}\right)c^{+}(n)
\]
for all $n$ relatively prime to $3$. 
\end{mainprop*}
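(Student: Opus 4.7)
The plan is to assemble the pieces already established in Sections 3 and 6: the $\St$-eigenspace decomposition of $\mathcal{M}^{\n}(9,R)$, the existence of newforms on $\Gamma^{3}$, and the intertwining of $\mathcal{M}^{\n}(9,R)^{(1)}$ and $\mathcal{M}^{\n}(9,R)^{(-1)}$ by the reflection $J$. The existence input is supplied by Proposition \ref{prop:There-exists-R}: since $N^{\n}_{\Gamma^{3}}(T) \sim \frac{1}{6}T^{2}$ and $JTJ = T^{-1}$ modulo $\Gamma^{3}$, the operator $J$ interchanges $\mathcal{M}^{\n}(\Gamma^{3},R)^{(1)}$ and $\mathcal{M}^{\n}(\Gamma^{3},R)^{(-1)}$ while preserving $R$, so $\mathcal{M}^{\n}(\Gamma^{3},R)^{(1)}$ is non-empty for infinitely many $R$; Lemma \ref{lem:lifts-fromGamma3toGamma09} and Proposition \ref{prop:f_in_newspace} then inject these functions via $\A_{3}$ into $\mathcal{M}^{\n}(9,R)^{(1)}$.

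Fix such an $R$. Since $\mathcal{T} = \{\mathcal{T}_{p}\}_{p \neq 3}$ is a commuting family of normal operators preserving $\mathcal{M}^{\n}(9,R)^{(1)}$, I would pick an orthonormal $\mathcal{T}$-eigenbasis $\{\Phi_{1},\ldots,\Phi_{h}\}$ of this space, set $\Psi_{j} = \Phi_{j|J}$ to obtain an analogous basis of $\mathcal{M}^{\n}(9,R)^{(-1)}$, and define $F_{j}^{\pm} = \frac{1}{2}(\Phi_{j} \pm \Psi_{j})$. Because $\Phi_{j}$ and $\Psi_{j}$ lie in distinct $\St$-eigenspaces they are orthogonal, so neither $F_{j}^{+}$ nor $F_{j}^{-}$ vanishes. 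They are simultaneous eigenfunctions of $J$ (with eigenvalues $\pm 1$) and of every $\mathcal{T}_{p}$, hence of every Hecke operator $T_{n}$ with $(n,3) = 1$, so after normalising the first Fourier coefficient they become Maass newforms in the sense of Definition \ref{def:newformGamma0N}.

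To verify the eigenvalue relation I would compute Fourier expansions. Writing $\Phi_{j}(z) = \sum_{n} a_{j}(n) \kappa_{n}(y) e(nx)$, the condition $\Phi_{j|\St} = \zeta_{3} \Phi_{j}$ forces $a_{j}(n) = 0$ whenever $n \not\equiv 1 \pmod{3}$, while $\Psi_{j}(z) = \Phi_{j}(-\bar{z})$ has $n$-th Fourier coefficient $a_{j}(-n)$. The coefficient $c_{j}^{\pm}(n)$ of $F_{j}^{\pm}$ at $n$ coprime to $3$ is therefore $a_{j}(n)$ when $n \equiv 1 \pmod{3}$ and $\pm a_{j}(-n)$ when $n \equiv 2 \pmod{3}$, giving $c_{j}^{-}(n) = \left(\frac{n}{3}\right) c_{j}^{+}(n)$ immediately. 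The argument contains no real obstacle: it is almost entirely an assembly of previously established results, with the only non-formal input being Weyl's law for $\Gamma^{3}$, which is precisely what upgrades a single construction to the infinite family claimed.
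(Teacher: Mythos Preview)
Your argument is correct and follows essentially the same route as the paper: both diagonalise the modified Hecke family $\mathcal{T}$ on $\mn(9,R)^{(1)}$, transport via $J$ to $\mn(9,R)^{(-1)}$, form the symmetric and antisymmetric combinations $F_j^{\pm}$, and read off the relation $c_j^{-}(n)=\left(\frac{n}{3}\right)c_j^{+}(n)$ from the Fourier-coefficient support condition forced by $\Phi_{j|\St}=\zeta_3\Phi_j$. The infinitude is supplied in both cases by Proposition~\ref{prop:There-exists-R} together with Proposition~\ref{prop:f_in_newspace}, exactly as you indicate.
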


\section{Some perspectives from representation theory}
\label{sec:Some-Perspectives-from}
\subsection{A representation-theoretical interpretation of the main theorem}
\label{sec:A-representation-theoretic-inter}
It is a well-known fact that a modular form $f$ of weight $k$ on a subgroup $\Gamma \subseteq \PSLZ$ corresponds to a vector-valued modular form $F$ on $\PSLZ$, 
transforming with respect to a certain finite dimensional representation $\rho_{F}$.
To be more precise, if $\Gamma\backslash\PSLZ=\left\{V_i\right\}$ and $F$ is taken as the vector with components $F_i=f_{|_{k}V_i}$ then $F$ will transform according to 
the induced representation of $\Gamma$. The matrix coefficients of this can be defined by $\rho_F(A)_{ij}=1$ if $V_{i}AV^{-1}_{j}\in\Gamma$ and $0$ otherwise. 
The representation $\rho_{F}$ is in general not irreducible. 
One way of obtaining information about the original modular form $f$ is to study the decomposition of $\rho_F$ into irreducible components. 
It turns out that in the present case, that of Maass waveforms on $\Gamma_0(9)$, the representation $\rho_F$ can be viewed as a representation of the symmetric group $\S_4$. 
The purpose of this section is to demonstrate how the irreducible representations of $\S_4$ are related to the 
different types of functions appearing in the decomposition of the space $\mathcal{M}(9)$ given in Theorem \ref{thm:main_thm}. 

For technical reasons it is easier to work with the general linear group rather than the special linear group.
If $\Gamma\subseteq\PSLZ$ then we let $\overline{\Gamma}\subseteq\PGLZ$ denote the 
the group generated by the elements of $\Gamma$ together with the reflection $J$. There are two canonical characters 
on $\PGLZ$, namely the trivial character, $\chi_{0}$, and the sign of
the determinant, $\chi_{\sgn}=\sgn \circ \det $.

Fix a non-zero $f\in\mathcal{M}(9)$
which transforms under the character $\chi_{0}$ or $\chi_{\sgn}$
on $\overline{\Gamma_{0}(9)}$, that is, with the notation used in the introduction, $f$ is either
\emph{even} or \emph{odd}. Since $\A_{3}\Gamma_{0}(9)\A_{3}^{-1}=\Gamma(3)$
we see that $f_{|\A_{3}^{-1}}\in\mathcal{M}(\Gamma(3))$.
Set $v_{\gamma}=f_{|\A_{3}^{-1}\gamma}$ and consider the complex vector space
\[
\VV\left(f\right)=\left\{ v_{\gamma}\,|\,\gamma\in\Gamma(3)\backslash\PGLZ\right\}
\]
equipped with the following $\PGLZ$-action: $A.v_{\gamma}:=v_{\gamma A^{-1}}$.
Since $\Gamma(3)$ is a normal subgroup it is clear that the elements $\gamma$ can be taken as simultaneous right- and left-coset representatives and hence  
 $B.v_{\gamma}=v_{\gamma B^{-1}}=v_{\gamma}$ if $B\in \Gamma(3)$. 
It follows that $\VV(f)$ is a complex, non-zero, finite-dimensional representation of $\PGLZ/\Gamma(3)$.

Let  $\mathbb{F}_{3}$ denote the finite field with $3$ elements and consider the group homomorphism $\varphi_3:\PGLZ \rightarrow \PGLF{3}$
given by reducing each matrix entry mod $3$. 
 Since $\varphi_3$ is surjective and its kernel is $\Gamma(3)$ it follows immediately that $\PGLZ/\Gamma(3)\simeq\PGLF{3}$.
It is easy to see that the maps $\bar{S}:=\varphi_3(S)$, $\bar{T}:=\varphi_3(T)$ and $\bar{J}:=\varphi_3(J)$ are 
generators of $\PGLF{3}$.
A short calculation shows that the map $h:\PGLF{3}\rightarrow \S_{4}$ defined on the generators by $h(\bar{S})=(1 2)(3 4)$, $h(\bar{T})=(1 2 3)$ and $h(\bar{J})=(1 2)$
gives an isomorphism between $\PGLF{3}$ and the symmetric group $\S_4$.
Due to this isomorphism we are able to obtain all necessary information about the irreducible representations of $\PGLF{3}$ from standard references on representations of finite groups, for example Fulton and Harris \cite{MR1153249}. 

It is clear that $\VV(f)$ can be viewed as a representation of $\S_{4}$, and
we know that $\S_{4}$ has five irreducible representations: $\chi_{0}$, $\chi_{\sgn}$, $WS$, 
$\rho_{\std}$, and $\rho_{\std}\otimes\chi_{\sgn}$, of dimensions
$1$, $1$, $2$, $3$ and $3$, respectively.
It remains to find out the exact correspondence between these irreducible representations 
and properties of the function $f$. 
If $f$ is an oldform related to a specific newform $g$ of lower level then we view the representation 
generated by \emph{all} oldforms obtained from $g$ as the natural object associated to $f$, instead of just the representation $\VV(f)$ defined above. 
To be precise, if $g$ has level $1$ and $f\in \textrm{span}\left\{g,g_{|\A_3},g_{|\A_9}\right\}$ then we set 
$\VV(f):=\VV(g,g_{|\A_3},g_{|\A_9})=\left\{ {v_i}_{\gamma}\,|\,\gamma\in\Gamma(3)\backslash\PGLZ,\, i=0,1,2\right\}$,
where $v_i=g_{|{\A}^{i-1}_3}$, and analogously if $g$ has level $3$. 
By identifying $S_4$ and $\PGLF{3}$ as above, and using the character table of $\S_{4}$, it is not hard to show the following properties explicitly:
\begin{enumerate}
\item \label{id:old1}If $f$ is an oldform associated to an even newform of level $1$ then $\VV(f)=\chi_{0} \oplus\rho_{\std}$.
\item \label{id:old3}If $f$ is an oldform associated to an even newform of level $3$ then $\VV(f)=\rho_{\std}$.
\item \label{id:twist}If $f$ is the twist of an even newform of level $1$ or $3$ then $\VV(f)=\rho_{\std}$.
\item \label{id:W}If $f$ is the lift of a newform on $\Gamma^{3}$ then $\VV(f)=W$. 
\end{enumerate}
In the first three cases, an odd instead of an even newform corresponds to a twist of the representation by $\chi_{\sgn}$. 
To prove \ref{id:W} it is helpful to use the description of $\Gamma^3$ in terms of congruences  \eqref{eq:gamma3_congdef2}
to show that the kernel of $W$ is precisely $\Gamma^3$ (viewed as a subgroup of $\PGLF{3}$). 
From \ref{id:old1}-\ref{id:W} we see that there is a relationship between the  irreducible representations of $\S_4$ and the 
different constituents in the decomposition of $\mathcal{M}(9)$ given by Theorem \ref{thm:main_thm}.
However, to \emph{prove} that such a decomposition holds we would need a one-to-one correspondence, something which seems   
to be out of reach using the ``classical`` approach outlined above. 

To use representation theory to prove that Theorem \ref{thm:main_thm} holds, or equivalently, that there are no genuinely new forms on $\Gamma_{0}(9)$,
we have to consider automorphic representations of $\GL{A}$ where $\AA$ is the ring of adeles over $\Q$. 
A precise formulation of
all definitions and results in this area would be too lengthy and
take us far out of the scope of this paper. We therefore simply 
outline the essential parts of the argument and leave the technicalities
to the interested reader. The necessary background can be obtained from, for example, Gelbart
\cite{gelbart:autom_adele}. Let $\mathbb{Z}_{p}$ be the ring of
integers in $\mathbb{Q}_{p}$, the $p$-adic completion of $\mathbb{Q}$
at the finite prime $p$. For a positive integer $N$ we choose subgroups $K_{0}(N)$,
$K_{0}^{0}(N)$, $K(N)$ and $K^{3}$ of $\GLZhat=\prod_{p<\infty}\GLZp$
satisfying $\GLRplus K\cap\mbox{GL}_{2}(\mathbb{Q})=\Gamma_{0}(N)$,
$\Gamma_{0}^{0}(N)$, $\Gamma(N)$ and $\Gamma^{3}$, respectively. 
The choice is made such that the determinant map is surjective to $\Z_{p}^{\times}$, and hence such that  
strong approximation holds, that is, $\GL{A}=\GL{Q} K \GLRplus$ when $K$ is any of the above groups. 

Let $\pi$ be an  automorphic representation and write $\pi^{K}$ for
the set of vectors of $\pi$ which are fixed under some $K\subseteq\GLZhat$.
It is well-known that there is a unique newform attached to each irreducible component of $\pi$. 
If $\pi^{K_{0}(9)}\ne\left\{ 0\right\} $ then we use
$\A_{3}\in\GLRplus$ to show that $\pi^{K(3)}\ne\left\{ 0\right\}$ and it is easy to see that $\VV(\pi):=\pi^{K(3)}$
can be viewed as a complex, non-zero, finite-dimensional representation of $\PGLF{3}$.
We want to show that none of the possible irreducible constituents of $\VV(\pi)$ corresponds to a genuine newform. 

It clear that if $\VV(\pi)$ contains the trivial
representation then it contains a vector fixed under $\GLZhat$
and the associated newform has level $1$. 
If $\VV(\pi)$ contains $\rho_{\std}$ then it is easy to check that it contains a vector which fixed under $T$ and therefore under the 
 whole group $K_{0}(3)$. Hence the associated newform has level $3$.
If $\VV(\pi)$ contains $\chi_{\sgn}$ or $\chi_{\sgn}\otimes\rho_{\std}$
then $\VV(\pi)\otimes \chi_{\sgn}=\VV(\pi\otimes\chi_{3})$ contains $\chi_{0}$ or $\rho_{\std}$,
and hence $\pi$ is a twist of an automorphic representation associated to a newform of level
$1$ or $3$. For the last statement, observe that for $A\in\GLF{3}$ we have $\chi_{3}(\det(A))=\sgn(\det(A))$,
 and by definition $\pi\otimes\chi_{3}$
consists of functions of the form $g\mapsto\varphi(g)\chi_{3}(\det(g))$
with $\varphi\in\pi$. Finally, if $\VV(\pi)$ contains $W$ then
$\pi$ contains a non-zero vector fixed under the subgroup $K^{3}\subset\GLZhat$
and the newform attached to $\pi$ is therefore a newform on $\Gamma^3$. 

Since one of these five possibilities must occur we conclude that any newform attached to $\pi$ is in fact a newform on some subgroup of lower level, 
The corresponding ``classical'' statement for a Maass form, or modular form, $f$, follows
by considering $\pi_{f}$, the representation generated by all the $\GL{A}$-translates
of the automorphic form $\varphi_{f}$ associated to $f$. See for
example \cite[section 5.C]{gelbart:autom_adele}. 

The connection with representation theory of ${\S}_{4}$ was first mentioned to the author by A. Mellit.
A more precise description of the idea behind this correspondence, together with details regarding the automorphic point of view 
was given to the author by K. Buzzard (private communication).

\subsection{Multiplicity one}
A fundamental property in the theory of holomorphic newforms is that they are uniquely determined by their Hecke eigenvalues, $\lambda_n$. 
In fact, Atkin and Lehner \cite[Theorem 4]{atkinlehner} showed that they are uniquely determined by any infinite subset of the Hecke eigenvalues with prime index. 
This is the first example of a \emph{multiplicity one} theorem. There has been  many generalizations of this theorem, both formulated classically and 
in in terms of automorphic representations. 
Using  Gelbart \cite[Thm.\ 5.19]{gelbart:autom_adele} to translate results from the language of automorphic representations to that of Maass waveforms 
we will now describe some of the strongest multiplicity one theorems which are known at the moment.
\begin{thm}
[Jacquet-Shalika, Gelbart, Ramakrishnan and Rajan]\label{thm:1}Let
$\varphi\in\mn(N,R)$ and $\psi\in\mn(N',R)$
be two Maass newforms with identical Laplace eigenvalue but possibly
different levels $N$ and $N'$. Let $\lambda_{p}$ and $\lambda_{p}'$
denote their respective $T_{p}$-eigenvalues and define $S$ to be
the set of primes where these eigenvalues are different: \[
S=\left\{ p\quad\mbox{prime}\,|\,\lambda_{p}\ne\lambda'_{p}\right\} .\]
Let $D\left(S\right)$ be the Dirichlet (or analytic) density of $S$,
defined by \[
D\left(S\right)=\lim_{s\rightarrow1^{+}}\frac{1}{\ln\left(\frac{1}{s-1}\right)}\sum_{p\in S}p^{-s}.\]
Then 
\begin{enumerate}[(a)]
\item  If $S$ is finite then $N=N'$ and $f=g$.\label{mpt-a}
\item  If $D\left(S\right)<\frac{1}{8}$ then $N=N'$ and $f=g$. \label{mpt-b}
\item  If $\sum_{p\in S}p^{-\frac{2}{5}}<\infty$ then $N=N'$ and\label{mpt-c}
$f=g$. 
\end{enumerate}

\end{thm}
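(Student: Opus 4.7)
The plan is to reduce each of the three statements to its counterpart for cuspidal automorphic representations of $\mathrm{GL}_2(\mathbb{A})$ and then invoke the corresponding known theorem in that setting. The bridge is the standard dictionary (see e.g.~\cite[Thm.~5.19]{gelbart:autom_adele} as cited in the paper) associating to a Maass newform $\varphi\in\mn(N,R)$ an irreducible cuspidal automorphic representation $\pi_{\varphi}$ of $\mathrm{GL}_2(\mathbb{A})$ whose local component at the archimedean place is determined by $R$ and whose local components at the finite primes $p\nmid N$ are unramified principal series with Satake parameters encoding the Hecke eigenvalues $\lambda_p$. Under this dictionary, the statement ``$\varphi=\psi$ and $N=N'$'' translates to ``$\pi_{\varphi}\simeq\pi_{\psi}$ as abstract representations,'' since newforms are determined uniquely by their automorphic representation and the conductor is recovered as the arithmetic conductor of $\pi$.

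First I would dispatch (\ref{mpt-a}): with only finitely many primes $p$ where $\lambda_p\ne \lambda'_p$, the local components $\pi_{\varphi,p}$ and $\pi_{\psi,p}$ agree at almost all finite primes and at infinity (by the equality of Laplace eigenvalues). The classical strong multiplicity one theorem of Jacquet--Shalika (together with Gelbart's reformulation for $\mathrm{GL}_2$) then forces $\pi_{\varphi}\simeq\pi_{\psi}$ globally, and hence $N=N'$ and $\varphi=\psi$. Next, for (\ref{mpt-b}), I would invoke Ramakrishnan's refinement \cite{MR1253208} of strong multiplicity one, which is precisely the assertion that the density threshold for forcing equality can be pushed from $0$ down to $1/8$. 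This is the result already quoted in the paper as Theorem \ref{thm:thm.1_over_8}, translated verbatim to the Maass setting via Gelbart's dictionary. For (\ref{mpt-c}), I would cite Rajan's sharpening \cite{MR1994478}, which replaces the density hypothesis by the weaker analytic condition $\sum_{p\in S}p^{-2/5}<\infty$; the translation to Maass newforms is again formal.

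The main obstacle---and the reason this is not a self-contained proof---is that each of (\ref{mpt-a})--(\ref{mpt-c}) is a deep theorem whose proof rests on subtle analytic properties of Rankin--Selberg $L$-functions and (for (\ref{mpt-b}),(\ref{mpt-c})) on functorial lifts such as the symmetric square and the Rankin--Selberg convolution. All three require careful control of the ramified local factors at the bad primes dividing $NN'$, where the Satake parameters are no longer available and one must instead use the local Langlands correspondence for $\mathrm{GL}_2$ to match local components. In the present exposition I would therefore not reproduce these arguments but rather state clearly that (\ref{mpt-a}),(\ref{mpt-b}),(\ref{mpt-c}) follow by combining the dictionary of Gelbart with the theorems of Jacquet--Shalika, Ramakrishnan, and Rajan, respectively, observing only that the archimedean component is common to $\pi_{\varphi}$ and $\pi_{\psi}$ because the two forms share the spectral parameter $R$, so the hypotheses of those theorems---which are phrased purely in terms of finite-place Satake parameters---translate cleanly into the stated conditions on the sets $S$.
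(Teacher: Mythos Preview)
Your proposal is correct and matches the paper's own treatment: the paper does not prove this theorem either but simply cites Jacquet--Shalika \cite[Thm.~4.8]{MR618323} and Gelbart \cite[Thm.~5.12]{gelbart:autom_adele} for (\ref{mpt-a}), Ramakrishnan \cite{MR1253208} for (\ref{mpt-b}), and Rajan \cite[pp.~188--189]{MR1994478} for (\ref{mpt-c}), after invoking Gelbart \cite[Thm.~5.19]{gelbart:autom_adele} as the dictionary between Maass newforms and automorphic representations. Your write-up is in fact more detailed than the paper's, which dispatches the matter in two sentences of citations.
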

The ,,standard`` strong multiplicity one theorem (\ref{mpt-a}) follows from
e.g. Jacquet-Shalika \cite[Thm.\ 4.8]{MR618323} or Gelbart \cite[Thm.\ 5.12]{gelbart:autom_adele}.
The refinements obtained in (\ref{mpt-b}) and (\ref{mpt-c}) were shown by Ramakrishnan
\cite{MR1253208} and Rajan \cite[pp.\ 188-189]{MR1994478} respectively.
See also \cite{MR1707005}. A slightly different flavor of multiplicity
one is given by the following theorem of Ramakrishnan \cite[Cor.\ 4.1.3]{MR1792292}.
All notations are as in Theorem \ref{thm:1}.
\begin{rem}
The above theorem is also true for forms with Nebentypus (Dirichlet
characters). In this case, if $\varphi$ and $\psi$ are assumed to
have Nebentypus $\chi$ and $\chi'$ then the conclusions (\ref{mpt-a})-(\ref{mpt-c})
also include the statement that $\chi=\chi'$. \end{rem}
\begin{thm}
[Ramakrishnan]\label{thm:2}If $\lambda_{p}^{2}=\lambda_{p}'^{2}$
for every prime $p,$ $\left(NN',p\right)=1$ then there exists a
Dirichlet character $\chi$ such that $\lambda_{p}'=\chi(p)\lambda_{p}.$
\end{thm}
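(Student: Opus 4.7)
My plan is to pass to the Gelbart--Jacquet symmetric square lift, promote the pointwise agreement of Satake data to a global isomorphism via strong multiplicity one on $GL_{3}$, and finally invoke Ramakrishnan's descent theorem to get the twist relation.

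First, translate the hypothesis into Satake data. At each prime $p\nmid NN'$, write the Satake parameters of $\varphi$ as $\{\alpha_{p},\beta_{p}\}$ with $\alpha_{p}\beta_{p}=1$ and $\alpha_{p}+\beta_{p}=\lambda_{p}$, and similarly $\{\alpha_{p}',\beta_{p}'\}$ for $\psi$. The identity $\lambda_{p}^{2}=\lambda_{p}'^{2}$ amounts to saying that the two unordered pairs coincide up to a simultaneous sign $\epsilon_{p}\in\{\pm 1\}$, i.e.\ $\{\alpha_{p},\beta_{p}\}=\{\epsilon_{p}\alpha_{p}',\epsilon_{p}\beta_{p}'\}$. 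The local Satake parameters of the symmetric square lift $\mathrm{Sym}^{2}(\varphi)_{p}$ are $\{\alpha_{p}^{2},1,\beta_{p}^{2}\}$, which are manifestly invariant under this sign ambiguity; therefore $\mathrm{Sym}^{2}(\varphi)_{p}\cong\mathrm{Sym}^{2}(\psi)_{p}$ at every unramified prime.

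Next, by Gelbart--Jacquet, $\mathrm{Sym}^{2}(\varphi)$ and $\mathrm{Sym}^{2}(\psi)$ are isobaric automorphic representations of $GL_{3}(\AA)$; they are cuspidal unless $\varphi$ (respectively $\psi$) is dihedral, in which case the symmetric square is an isobaric sum of a Hecke character and a cuspidal $GL_{2}$-form. By strong multiplicity one for $GL_{3}$ (Jacquet--Piatetski-Shapiro--Shalika), the local agreement at all but finitely many primes upgrades to a global isomorphism $\mathrm{Sym}^{2}(\varphi)\cong\mathrm{Sym}^{2}(\psi)$. The concluding step invokes Ramakrishnan's descent theorem from \cite{MR1792292}: for cuspidal automorphic representations $\pi,\pi'$ of $GL_{2}(\AA)$ with the same central character, $\mathrm{Sym}^{2}(\pi)\cong\mathrm{Sym}^{2}(\pi')$ implies that there exists a Hecke character $\chi$ with $\chi^{2}=1$ such that $\pi'\cong\pi\otimes\chi$. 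Since both $\varphi$ and $\psi$ are Maass forms with the same Laplace eigenvalue, the resulting $\chi$ has trivial archimedean component and is therefore a quadratic Dirichlet character, yielding $\lambda_{p}'=\chi(p)\lambda_{p}$ at all good primes.

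The main obstacle is this descent step, which is the technical core of Ramakrishnan's work. It rests on his own modularity theorem for the Rankin--Selberg convolution $\varphi\boxtimes\tilde\psi$ as an automorphic representation of $GL_{4}(\AA)$, together with an analysis of poles of $L(s,\varphi\times\tilde\psi\otimes\eta)$ as $\eta$ ranges over Hecke characters. The dihedral (CM) case requires a separate treatment because $\mathrm{Sym}^{2}$ ceases to be cuspidal; one then has to compare the isobaric constituents of $\mathrm{Sym}^{2}(\varphi)$ and $\mathrm{Sym}^{2}(\psi)$ individually and piece together the twist from each.
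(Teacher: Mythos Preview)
The paper does not prove this theorem; it is stated as a quotation of Ramakrishnan \cite[Cor.~4.1.3]{MR1792292} and used as background for the discussion of strong multiplicity one. There is therefore no ``paper's own proof'' to compare against.

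That said, your sketch is a faithful outline of Ramakrishnan's actual argument in the cited reference: pass to the adjoint/symmetric-square lift on $GL_{3}$, use strong multiplicity one there to globalize the local agreement, and then descend back to a twist relation on $GL_{2}$ via the modularity of the Rankin--Selberg product on $GL_{4}$. Two small points worth tightening: (i) the ``same central character'' hypothesis you invoke for the descent step is not part of the stated theorem, but it holds automatically here because the paper's Maass newforms have trivial Nebentypus, which is what forces $\chi^{2}=1$; (ii) you correctly flag that the dihedral case needs separate handling, and you might also note that the archimedean compatibility (same Laplace eigenvalue $R$) is what guarantees the resulting id\`ele class character $\chi$ has trivial infinite component and hence is a genuine Dirichlet character. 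With those caveats, your proposal is an accurate summary of the source proof.
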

From the corresponding results in the $l$-adic context \cite{MR1606395}
it is conjectured (cf.~e.g.~\cite[p.\ 189]{MR1994478}) that the
 ``critical density'' $\frac{1}{8}$ in Theorem \ref{thm:1} (\ref{mpt-b})
above can be replaced with $\frac{1}{2}$ if $f$ and $g$ are not
of CM-type. 

With notation as in the previous theorems assume that $\lambda_{p}'=\chi(p)\lambda_{p}$
with a non-trivial Dirichlet character $\chi$ mod $M$. Then $S=S_{\chi}$
where 
\begin{align*}
S_{\chi} & = \left\{ p\quad\mbox{prime}\,|\,\chi(p)\ne1\right\} \quad\mbox{and}\\
D\left(S_{\chi}\right) & =  \frac{1}{\phi(M)}\left|\left\{ p\in\left(\Z/M\Z\right)^{*}\,|\,\chi(p)\ne1\right\}\right| 
\end{align*}
by Serre \cite[Thm.\ 2, Ch.\ VI]{MR0344216}. Since $\sum_{d\mod M}\chi\left(d\right)=0$ unless $\chi$ is trivial we see that the set
$\left\{ \chi\left(d\right)\,|\, d\in\left(\Z/M\Z\right)^{*}\right\} $
contains at most $\frac{\phi(M)}{2}$ ones and thus $D\left(S_{\chi}\right)\ge\frac{1}{2}$. 
It is therefore not expected
that the inequality, $D\left(S\right)<\frac{1}{8}$, of Theorem \ref{thm:1}
can be replaced by any inequality stronger than $D\left(S\right)<\frac{1}{2}$
even for generic forms. 

Now, let $\left\{ F^{+},F^{-}\right\} $ be a pair of newforms on
$\Gamma_{0}(9)$ as in Proposition \ref{prop:Hecke-pair},
i.e. having Hecke eigenvalues $\lambda_{p}^{-}=\chi_{3}(p)\lambda_{p}^{+}$
with $\chi_{3}(p)=\left(\frac{p}{3}\right)$. Let $\pi^{+}$
and $\pi^{-}$ be the automorphic representations corresponding to
$F^{+}$ and $F^{-}$ and let $\Pi^{+}$ and $\Pi^{-}$ be the base-change
of $\pi^{+}$ and $\pi^{-}$ with respect to the quadratic extension
$\Q\left(\zeta_{3}\right)$. The associated character of this
extension is $\chi_{3}$ and since $\pi^{-}=\chi_{3}\otimes\pi^{+}$
it follows that $\Pi^{+}=\pi^{+}\oplus\chi_{3}\otimes\pi^{+}$ and
$\Pi^{-}=\pi^{-}\oplus\chi_{3}\pi^{-}$ are equal. Or, in other words,
the base-changed forms $E^{+}$and $E^{-}$ of $F^{+}$and $F^{-}$
have identical L-functions.

\section{Concluding remarks and possible extensions}

A natural question to ask at this point is whether the results of
this paper, in particular the non-existence of genuinely new forms
and the existence of multiple ``new`` eigenvalues, are specific to $\Gamma_{0}(9)$
or if similar arguments are applicable to other (square) levels. 

An essential ingredient in the proof of the existence of multiple eigenvalues in the new spectrum on $\Gamma_0(9)$ was the normalizer $\St$.
In particular, that we could prove the existence of eigenfunctions of this operator, with eigenvalue different from $1$.
According to Atkin and Lehner \cite[Lemma 29]{atkinlehner} the normalizer of
$\Gamma_{0}(N)$ contains maps of the form $T^{\frac{1}{q}}$
if $q^{2}||N$ and $q=2,3,4,8$. Recall the argument which forces
a higher multiplicity: if $f$ is an eigenfunction of $\St$ with
eigenvalue $\mu\ne1$ then $f_{|J}$ is also an eigenfunction of $\St$,
but with eigenvalue $\mu^{-1}\ne\mu$. To prove multiplicity in the case of even
$q$ we must prove the existence of eigenfunctions with eigenvalues not equal to $1$
or $-1$.

For the non-existence of genuinely new Maass forms we have displayed two different approaches, one ``classical``, using orthogonality relations and the Selberg trace formula, 
and one ''automorphic``, using representation theory.  
For both of these approaches an important step was that we found the group $\Gamma^3$ between $\PSLZ$ and 
 $\Gamma_0^0(3)$, with the latter group conjugate to $\Gamma_0(9)$. 
Because of the simple form of $\Gamma^3$ we could prove that newforms on $\Gamma^3$ lifts to newforms on $\Gamma_0(9)$
and we could also express all relevant terms in the Selberg trace formula explicitly. 

To use the classical approach in this paper to study a the group $\Gamma_0(N^2)$ for an arbitrary integer $N$ a necessary first step is to determine all its conjugates in $\PSLZ$, as well as their 
supergroups and possible lifting maps. 
Once this is done, a hint to whether there are genuinely new forms or not can be obtained from the indices of the groups involved, together with careful book-keeping.
A complete proof clearly requires more effort, and if the intermediate groups are not cycloidal, or otherwise admit simple descriptions, it might be 
 hard to prove all required lemmas. 
Unfortunately we know by a result of Petersson \cite{MR0294255} that there is in fact only a finite number of cycloidal congruence groups, all with indices dividing
 $55440=2^{4}\cdot3^{2}\cdot5\cdot7\cdot11$. For general $N$ it is therefore always necessary to investigate more complicated groups.

In the representation-theoretical approach described in Section \ref{sec:A-representation-theoretic-inter} the key components are the isomorphisms 
$\Gamma^0_0(3)\backslash \PGLZ\simeq \PGLF{3}$ and $\PGLF{3}\simeq \S_{4}$. 
From these we immediately got an explicit description of all irreducible representations of $\Gamma_0^0(3)\backslash \PGLZ$. 
For a prime $p>3$ the group $\Gamma_0^0(p)$ is no longer equal to $\Gamma(p)$ and we first need to identify  $G_p:=\Gamma^0_0(p)\backslash \PGLZ$ as a subgroup of $\PGLF{p}$.
Then we have to find a subgroup $G$, of some symmetric group ${\S}_{n}$, which is isomorphic to $G_p$. 
The irreducible representations of $G_p$ (for a given, fixed $p$) can then be obtained 
by, for example, a computer algebra program. The biggest problem is clearly to identify fixed vectors under these irreducible constituents with subgroups of $\PSLZ$ and 
possible twists. 

In order to give a flavor  of how the general case might be treated we briefly discuss the cases $\Gamma_0(p^2)$ with  $p=2$, $p=5$ and $p=7$. 
It is known that there is a group $\Gamma^{2}$, of level and index
$2$ (cf.~\cite[1.5]{rankin:mod}), which does not contain $\Gamma_{0}(4)$.
The representation-theoretical approach is very simple in this case since $\Gamma_{0}(4)$ is conjugate
to $\Gamma(2)$ and $\Gamma(2)\backslash\PGLZ\simeq\PGLF{2}$.
Furthermore, $\PGLF{2}$ is isomorphic to the symmetric group $S_{3}$, which only has
three irreducible representations: $\chi_{0}$, $\chi_{\sgn}$ and
$\rho_{\std}$. It is then easy to show that $\chi_0$, $\chi_{\sgn}$ and $\rho_{\std}$ corresponds to $\PSLZ$, $\Gamma^2$, $\Gamma_{0}(2)$, in 
 the same way that the representations
$\chi_0$, $W$ and $\rho_{\std}$ of $\S_{4}$ corresponded to the subgroups $\PSLZ$, $\Gamma^3$ and $\Gamma_0(3)$ in 
Section \ref{sec:A-representation-theoretic-inter}. Using either
this approach, or lemmas analogous to those in Sections \ref{sub:Lifts-into-.Gamma3}
and \ref{sub:Lifts-from-} together with the Selberg trace formula,
it is easy to prove the following theorem.
\begin{thm}
There are no genuinely new forms on $\Gamma_{0}(4)$. 
\end{thm}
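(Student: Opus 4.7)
The plan is to adapt the strategy of Theorem~\ref{thm:main_thm} to $\Gamma_0(4)$, with the role of $\Gamma^3$ played by the unique index-two subgroup $\Gamma^2$ of $\PSLZ$ (realized as the kernel of the homomorphism $h:\PSLZ\to\Z/2\Z$ with $h(S)=h(T)=1$). By the same argument as in Example~\ref{exa:gamma09} we have $\A_2\Gamma_0(4)\A_2^{-1}=\Gamma(2)$, and $\Gamma(2)\subseteq\Gamma^2$ because $h$ factors through the abelianization of $\mathrm{PSL}_2(\Z/2\Z)\cong\S_3$; hence $\A_2$ provides a lifting map $\mathcal{M}(\Gamma^2,R)\to\mathcal{M}(\Gamma_0(4),R)$. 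A pleasant simplification over the $\Gamma_0(9)$ case is that no twists intervene: the only Dirichlet character whose conductor $q$ satisfies $q^2\mid 4$ is the trivial one, so there is no analogue of the summand $\mathcal{M}^{\t}(9,R)$.

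The first step is to establish analogues of Lemmas~\ref{lem:MinPSLZ}--\ref{lem:lifts-to-gamma09}. The normalizer of $\Gamma^2$ in $\PSLR$ modulo $\Gamma^2$ is generated by the involution $T:z\mapsto z+1$, which decomposes
\[
\mathcal{M}(\Gamma^2,R)=\mathcal{M}(\Gamma^2,R)^{(0)}\oplus\mathcal{M}(\Gamma^2,R)^{(1)}
\]
into $\pm1$ eigenspaces, the invariant part being $\mathcal{M}(1,R)$ (since $T\notin\Gamma^2$ and $\langle\Gamma^2,T\rangle=\PSLZ$) and the sign part being by definition $\mn(\Gamma^2,R)$. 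The orthogonality arguments of Lemma~\ref{lem:PerponGamma3andGamma09} and Proposition~\ref{prop:f_in_newspace} carry over verbatim, with $1+\zeta_3+\zeta_3^2=0$ replaced by $1+(-1)=0$; this produces pairwise orthogonal injections $\A_1,\A_2,\A_4:\mathcal{M}(\Gamma_0(1),R)\to\mathcal{M}(\Gamma_0(4),R)$, $\A_1,\A_2:\mn(\Gamma_0(2),R)\to\mathcal{M}(\Gamma_0(4),R)$, and $\A_2:\mn(\Gamma^2,R)\to\mathcal{M}(\Gamma_0(4),R)$.

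The second step is the genuinely-new trace formula comparison
\[
X^{\gn}(4)=X(4)+2X_1-2X(2)-X(\Gamma^2),\qquad X\in\{I,E,P,H\}.
\]
The tuples $(\mu,\kappa,v_2,v_3)$ are $(1,1,1,1)$, $(3,2,1,0)$, $(2,1,0,2)$, $(6,3,0,0)$ for $\PSLZ$, $\Gamma_0(2)$, $\Gamma^2$, $\Gamma_0(4)$ respectively, whence $I^{\gn}(4)=I_1(6+2-6-2)=0$ and $E^{\gn}(4)=2(E_1(2)+E_1(3))-2E_1(2)-2E_1(3)=0$. For $P^{\gn}(4)$ one applies Huxley's formula (observing that $F(4)=\{(\chi_{0,1},m):m\in\{1,2,4\}\}$ because the non-trivial primitive character $\chi_{-4}$ fails the condition $q_\chi m\mid 4$) together with the Venkov-type identity $\varphi_{\Gamma^2}(s)=2^{1-2s}\varphi_1(s)$, and, once the parabolic contributions are arranged in terms of $J_0$ and the obvious analogue $J_2$ of $J_3$ (with $3$ replaced by $2$), the same kind of short arithmetic as in Section~\ref{sub:The-Selberg-trace-subgroups} gives $P^{\gn}(4)=0$. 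The hyperbolic term is finally killed by the same non-negative compactly-supported test function $h_T$ argument as in the proof of Lemma~\ref{lem:gn(9R)-empty}.

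The main obstacle is the parabolic bookkeeping. One has to independently verify the scattering determinant $\varphi_{\Gamma^2}(s)=2^{1-2s}\varphi_1(s)$ (not covered by the formulas in the excerpt) and trace through Huxley's formula for $\varphi_{\Gamma_0(4)}(s)$; neither is deep, but the fact that the elliptic structure of $\Gamma^2$ (two order-three points, no order-two point) differs sharply from that of $\Gamma^3$ (three order-two points, no order-three point) means the intermediate terms look different along the way. As a calculation-free sanity check, the representation-theoretic sketch given just before the theorem—identifying $\chi_0$, $\chi_{\sgn}$, $\rho_{\std}$ of $\S_3\cong\PGLF{2}$ with forms from $\PSLZ$, $\Gamma^2$, $\Gamma_0(2)$ respectively—confirms that the three irreducible representations of $\S_3$ leave no room for a genuinely new constituent, giving an essentially automatic alternative route to the same conclusion.
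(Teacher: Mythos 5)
Your proposal is correct and takes essentially the same route the paper indicates for this theorem, namely the classical one via analogues of the $\Gamma^{3}$ lemmas together with the Selberg trace formula (your data $(\mu,\kappa,v_{2},v_{3})$, the set $F(4)$, and the resulting cancellations all check out), supplemented by the representation-theoretic argument via $\PGLF{2}\simeq\S_{3}$ that the paper also sketches. The one place where ``verbatim'' is slightly optimistic is the analogue of Lemma~\ref{lem:MinPSLZ}: since $S\notin\Gamma^{2}$ you must run that determinant argument on different group elements (e.g.\ $T^{2}$ and $ST$), but this is routine and does not affect the conclusion.
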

Motivated by a brief numerical investigation into the case of $\Gamma_0(25)$ we conjecture 
that there are no genuinely new Maass forms on $\Gamma_0(25)$. 
In the decomposition of $\mathcal{M}(25)$ there are clearly contributions from the oldforms and twists 
from $\Gamma_0(1)$ and $\Gamma_0(5)$ and lifts from the cycloidal group $\Gamma^5$ of index end level $5$. There is also an additional contribution from the 
Maass forms on $\Gamma_0(5)$ with Nebentypus $\chi_{5}=\left(\frac{\cdot}{5}\right)$, twisted by a character of conductor $5$ and order $4$. 
Numerically, we see that each form from $\Gamma^5$ appear on $\Gamma_0(25)$ with multiplicity two, meaning that there should exist a map different from 
$\A_{5}$, taking Maass forms from $\Gamma^5$ to $\Gamma_0(25)$. Under the \emph{assumption} that this is true, and additionally, \emph{assuming} that all involved spaces are orthogonal, it is possible to use 
classical dimension formulas to show that the space of genuinely new holomorphic modular forms of even weight on $\Gamma_0(25)$ is empty. 

Already in the case of $p=7$ the situation becomes much more involved. 
By Cummins and Pauli \cite{MR2016709} we know there are three conjugacy classes of groups between $\PSLZ$ and $\Gamma_0^0(7)$. 
As representatives we can choose  $\Gamma^7$ (the cycloidal group), $7C^{0}$ (of index $14$ and with two cusps) and  $7F^0$ (of index $28$ and with four cusps). 
A numerical investigation of the spectrum of $\Gamma_0(49)$ leaves a few eigenvalues 
 which do not seem to be related to either of the groups $\PSLZ$, $\Gamma_0(7)$, $\Gamma^7$, $7C^{0}$ or $7F^{0}$. 
We calculated $23$ eigenvalues (counted with multiplicity) and out of these there were two eigenvalues which we could not explain using the groups mentioned above
together with Dirichlet characters. 
Both of these eigenvalues seem to appear with multiplicity at least two, and we would therefore not expect them to be genuinely new.
However, the only type of lifts which we have not investigated are those which are associated to 
spaces of Maass forms on the groups $\Gamma^7$, $7C^{0}$ and $7F^0$ together with non-trivial characters. 
We have not determined whether there indeed exists non-trivial
characters on these groups or not. The numerical evidence is therefore
considered to be incomplete and we do not make any conjecture in this
case. Note that the two unexplained eigenvalues were also verified by D. Farmer and S. Lemurell (in connection with
\cite{farmer-lemurell}). 

We leave the matter of a more comprehensive numerical and theoretical study of genuinely new forms on the groups $\Gamma_0(N^2)$ as an open problem.
\subsection*{Acknowledgments}
I would like to thank A. Str\"ombergsson for showing me how
to prove the main theorem using the test function $h_{T}$ instead
of evaluating all hyperbolic terms explicitly. I am also grateful
to C. S. Rajan for explaining how the results of \cite{MR1707005} and \cite{MR1994478} transfer to
the setting of Maass waveforms and to D. Farmer, S. Koutsoliotas and S. Ehlen
for many useful comments on previous versions of this paper. 
I would also like to thank K. Buzzard for helping me to understand the automorphic approach in Section \ref{sec:A-representation-theoretic-inter}.

\def\cprime{$'$}

\end{document}